\documentclass{amsart}

\usepackage{amssymb,mathtools}
\usepackage{microtype}
\usepackage[sort,nocompress]{cite}
\usepackage[hidelinks]{hyperref}
\makeatletter
\renewcommand{\paragraph}{\@startsection{paragraph}{4}%
  {\normalparindent}{\z@}{-\fontdimen2\font}%
  {\normalfont\bfseries}}
\makeatother
\newcommand{\doi}[1]{\href{https://doi.org/#1}{doi:\nolinkurl{#1}}}
\newcommand{\R}{\mathbb R}
\newcommand{\C}{\mathbb C}
\newcommand{\K}{\mathcal K}

\newcommand{\Q}{\mathcal Q}

\newcommand{\im}{\operatorname{Im}}
\newcommand{\re}{\operatorname{Re}}
\newcommand{\tr}{\operatorname{tr}}
\newcommand{\supp}{\operatorname{supp}}
\newcommand{\divg}{\operatorname{div}}

\newtheorem{theorem}{Theorem}[section]
\newtheorem{proposition}[theorem]{Proposition}
\newtheorem{lemma}[theorem]{Lemma}

\newtheorem*{proposition*}{Proposition}
\newtheorem*{lemma*}{Lemma}
\theoremstyle{remark}

\numberwithin{equation}{section}

\title[Threshold scattering for the focusing Schr\"odinger equation]{Threshold Scattering for the Three-Dimensional Focusing Energy-Critical Nonlinear Schr\"odinger Equation}

\author{Pang-hung Chung}
\address{Department of Mathematics, Guangdong University of Education, Guangzhou 510640, People's Republic of China}
\email{penghongzhong@yahoo.com}

\author{Dan Han}
\address{Department of Mathematics, University of Louisville, Louisville, Kentucky 40292, USA}
\email{dan.han@louisville.edu}
\thanks{Corresponding author: Dan Han.}
\hypersetup{pdfauthor={Pang-hung Chung and Dan Han}}

\subjclass[2020]{Primary 35Q55; Secondary 35B40, 35P25}
\keywords{focusing energy-critical nonlinear Schr\"odinger equation, threshold scattering, interaction Morawetz estimate, concentration compactness, rigidity, nonradial data}
\date{}
\begin{document}
\raggedbottom

\begin{abstract}
Threshold scattering below the ground-state energy and gradient is established for the three-dimensional focusing energy-critical nonlinear Schr\"odinger equation. A single solution-dependent Gaussian interaction scale, mass-free angular coercivity, and a frequency-gap source estimate control the projected dynamics. These estimates yield quartic spacetime integrability and a zero-mass contradiction, without radial symmetry or an a priori $L^2$ bound.
\end{abstract}

\maketitle

\section{Introduction}
The competition between dispersion and concentration is a central problem
in nonlinear Schr\"odinger dynamics. Linear Schr\"odinger evolution spreads
a wave, whereas a focusing interaction can sustain a stationary profile or
produce concentration. At the energy-critical power, rescaling can change
the spatial and temporal concentration scales without changing the energy
norm. Energy conservation therefore does not by itself distinguish
scattering from a persistent concentration scenario. The sharp question is
whether the ground state marks the boundary between these behaviors.

In three dimensions this question is particularly delicate for nonradial
data in the homogeneous energy space. Such data need not have finite mass,
and a concentrated profile may move without any prescribed control of its
center. A proof must simultaneously recover positivity for the focusing
interaction, accommodate this motion, and control the low-frequency part
without assuming the mass that the argument is intended to construct.
The present paper gives a Gaussian interaction proof in which these issues
are separated into quantitative spatial estimates, a density-selected scale,
and an explicit estimate for the projected nonlinear source.

\paragraph{The equation and its threshold.}
We consider
\begin{equation}\label{NLS}
 (i\partial_t+\Delta)u=-|u|^4u,\qquad
 u(0)=u_0\in\dot H^1(\R^3),\qquad F(z):=|z|^4z.
\end{equation}
Here the homogeneous space is represented by $L^6$ functions with weak
gradient in $L^2$, and its norm is $\|f\|_{\dot H^1}:=\|\nabla f\|_2$.
The critical scaling is
$u(t,x)\mapsto a^{1/2}u(a^2t,ax)$, $a>0$.
For $f\in\dot H^1$, set
\[
 A(f):=\|\nabla f\|_2^2,\qquad V(f):=\|f\|_6^6,\qquad
 E(f):=\tfrac12 A(f)-\tfrac16 V(f).
\]
The stationary ground state is
$W(x):=(1+|x|^2/3)^{-1/2}$. With $G:=3^{3/2}\pi^2/4$, the sharp
Sobolev inequality and its normalization are
\begin{equation}\label{sharp}
 \forall f\in\dot H^1(\R^3),\qquad
 V(f)\le G^{-2}A(f)^3,\qquad
 A(W)=V(W)=G,\qquad E(W)=G/3.
\end{equation}
The sharp constant and optimizer are given in \cite{Aubin} and
\cite{Talenti}.
Indeed the sixth power of Talenti's constant is $16/(27\pi^4)=G^{-2}$.
The elliptic identity
\cite{KM}
gives $0<A(W)=V(W)$; equality in the Sobolev estimate gives
$A(W)=G^{-2}A(W)^3$. Positivity then gives $A(W)=G$ and $E(W)=G/3$.

A strong solution on an interval $I\ni0$ means a function
$u\in C(I;\dot H^1(\R^3))$ such that
$u\in L^{10}_{t,x}(J\times\R^3)$ for every compact interval $J\Subset I$
and such that Duhamel's formula holds on $I$. The continuation, stability
and scattering criteria used below are those of \cite{CW,Tao} and
\cite{KM}. For initial data
$u_0\in\dot H^1(\R^3)$, let $u:I_{\max}\times\R^3\to\C$ denote the unique
maximal-lifespan strong solution of \eqref{NLS}.

With the solution class fixed, the threshold-scattering statement is as follows.
\begin{theorem}\label{mainthm}
Let $u_0\in\dot H^1(\R^3)$ satisfy
\[
 E(u_0)<E(W),\qquad
 \|\nabla u_0\|_2<\|\nabla W\|_2.
\]
Then
\[
 I_{\max}=\R,
 \qquad
 \|u\|_{L^{10}_{t,x}(\R\times\R^3)}<\infty.
\]
Moreover, there exist $u_+,u_-\in\dot H^1(\R^3)$ such that
\[
 \lim_{t\to+\infty}\|u(t)-e^{it\Delta}u_+\|_{\dot H^1}=0,
 \qquad
 \lim_{t\to-\infty}\|u(t)-e^{it\Delta}u_-\|_{\dot H^1}=0.
\]
\end{theorem}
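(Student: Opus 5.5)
The plan follows the Kenig--Merle concentration-compactness/rigidity roadmap \cite{KM}. The radial reduction is replaced by a frequency-localized interaction Morawetz estimate with a Gaussian weight, in the spirit of the abstract.

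\textbf{Step 1: variational coercivity.} Using \eqref{sharp} and $E(u_0)\le(1-\delta_0)E(W)$, a continuity argument in time gives $A(u(t))\le(1-\delta_1)G$ on $I_{\max}$. This in turn yields $A(u)-V(u)\ge c\,A(u)$ and $E(u)\sim A(u)$. I also need a localized version of this coercivity for the angular/Gaussian-weighted quantities: after cutting off $u$ by a Gaussian of width $R$, the truncated function still lies below the threshold up to errors that are $o(1)$ in $R$, uniformly in the center.

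\textbf{Step 2: reduction to a critical element.} Suppose the theorem fails. Then the profile decomposition in $\dot H^1$, combined with the stability theory of \cite{CW,Tao,KM}, produces a minimal-energy blow-up solution $u_c$ with $E(u_c)<E(W)$ and $A(u_c)<G$. This solution is almost periodic modulo symmetries, with parameters $N(t),x(t)$. The standard reductions then leave the cases of finite-time blowup, soliton-like behaviour ($N\equiv1$), and low-to-high frequency cascade. Finite-time blowup is excluded by a localized mass argument as in \cite{KM}; that argument does not need $u\in L^2$ globally, because it uses $N(t)\to\infty$. In the remaining cases I use long-time Strichartz estimates to control $P_{\le K}u$ on intervals with $\int_J N(t)^2\,dt\sim K$.

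\textbf{Step 3: interaction Morawetz.} I form the interaction Morawetz functional for $v=P_{\ge \eta N_{\min}}u$ (or $P_{\le K}u$), with Gaussian weight $e^{-|x-y|^2/R^2}$. The scale $R$ is selected by where the density $|v|^2$ concentrates. Differentiating gives a main term, which Step 1's angular coercivity makes bounded below by $\iint |v(x)|^2|v(y)|^4$-type quantities. It also gives error terms:
\begin{itemize}
\item the projection source $P F(u)-F(Pu)$, handled by the frequency-gap estimate together with long-time Strichartz;
\item the Gaussian tails, controlled by the choice of $R$.
\end{itemize}
Together these give $\int_J\!\int |v|^{4}\,dx\,dt$ (a quartic spacetime bound) $\lesssim o(K)$. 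Almost periodicity gives the reverse bound $\gtrsim K$, which is a contradiction in the soliton case. In the cascade case, long-time Strichartz gives $u_c\in L^\infty_t \dot H^{-s}$, hence finite mass. Mass conservation together with $N(t)\to0$ along a sequence then forces the mass to be zero, so $u_c\equiv0$, another contradiction.

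\textbf{Step 4: conclusion.} Global existence and the bound $\|u\|_{L^{10}}<\infty$ then hold for all sub-threshold data. Scattering follows by the standard Duhamel/Cook argument.

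The main obstacle is Step 3: bounding the projected nonlinear source and the Gaussian-truncation errors by $o(K)$ uniformly in the moving center $x(t)$, without an a priori $L^2$ bound. I would first try a single $R$ chosen from the density profile, and use the frequency gap to place all high–low interactions in the Strichartz-controlled error.
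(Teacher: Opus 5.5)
Your Steps 1, 2 and 4 match the standard frame the paper also uses, but Step~3 stops exactly where the proof has to happen, and the tools you assign to it are either missing or unavailable. \emph{First, positivity of the focusing interaction term.} For the Gaussian weight the principal term is $\Q_R(\rho)+4\iint D^2k_R(x-y):\mathbf S_v(x,y)\,dx\,dy$. Here $D^2k_R(z)=\Phi_R I+w_RP_z$ has different radial and tangential eigenvalues ($\Phi_R$ versus $\Xi_R$). The focusing part, however, is $-\frac43\iint\rho(y)L_R(x-y)|v(x)|^6$, weighted by the full trace $L_R=\Phi_R+2\Xi_R$. The scalar gap $A(v)-V(v)\ge cA(v)$ of your Step~1 compares $\int|\nabla v|^2$ with $\int|v|^6$ against one common weight, even after Gaussian truncation. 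So it cannot bound the tangential piece $\iint\rho(y)\,w_R(|x-y|)\,|v(x)|^6$ by $\iint\rho(y)\,w_R(|x-y|)\,|P_{x-y}\nabla v(x)|^2$, which is a genuinely two-point, direction-dependent inequality. The paper needs three ingredients here. The convex phase reduction (Lemma~\ref{phase}) disposes of the $j\otimes j$ term. The angular inequality $N_w\le\frac{243}{4}\frac{A_g^2}{G^2}X_w$ (Lemma~\ref{angularlemma}) gives positivity only below the much smaller level $A_g\le\sqrt2G/9$. This forces a core--tail decomposition near the compact orbit, with the tensor gap \eqref{tensor} on the core and the sign-changing density form $\Q_R$ kept whole (Proposition~\ref{spatial}). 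Your reference to ``Step~1's angular coercivity'' points to an estimate you never establish. Likewise, once $R$ is selected from the density, the derivative acquires $R'(t)\mathcal J_R$, which you do not address. The paper controls it by the signed bound \eqref{radiuserror}, which comes from the implicit root identity together with the fact that $\partial_R\nabla k_R=\nabla\Psi_R$ is the selector's own profile.

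\emph{Second, the infrared part.} You propose long-time Strichartz control of $P_{\le K}u$ and negative regularity $u_c\in L^\infty_t\dot H^{-s}$ in the cascade case. Neither is established in three dimensions. These are exactly the dimension-specific inputs (double Duhamel for $d\ge5$, long-time Strichartz for $d=4$) whose unavailability in $d=3$ is the difficulty, so invoking them assumes the problem away. The endgame is also reversed. An $\dot H^1$-normalized profile at frequency $N$ has mass $\sim N^{-2}$, so $N(t)\to0$ cannot force zero mass. In addition, $P_{\le K}u$ need not have finite mass at all, so it cannot serve as the Morawetz field. The paper needs no case split and no negative regularity; after normalizing to $N(t)\ge1$ the chain is as follows.
\begin{itemize}
\item It proves $F(u)\in L^\infty_tL^1_x$ from refined Sobolev and the no-waste recurrence (Lemma~\ref{L4lemma}).
\item It fixes $\nu$-independent terminal times on which $\|f\|_{L^2(0,T_j)}$ and its future maximal function are bounded by $\mathfrak K_\nu([0,T_j])^{1/2}$ (Proposition~\ref{comparison}).
\item Consequently both projection sources are $o_\nu(1)\,\mathfrak K_\nu(I_j)$, e.g.\ \eqref{gsource}, and are absorbed by \eqref{quarticcoercivity}.
\item The resulting bound $\int_0^\infty\|P_{>\nu}u\|_4^4\,dt<\infty$ yields $F(u)\in L^2_tL^1_x$.
\item Dual endpoint Strichartz then gives $\|P_{\le K}u(t)\|_2\lesssim_K\|f\|_{L^2(t,\infty)}$ by \eqref{low-L2-tail}, and letting $t\to\infty$ before $K\to\infty$ forces zero mass.
\end{itemize}
Without replacements for these two mechanisms, your Step~3 restates the problem rather than proving it.
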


Thus the theorem gives global existence, finite critical spacetime size,
and scattering in both time directions in the homogeneous energy class;
no $L^2$ assumption is imposed on $u_0$. We next place this statement in
the existing scattering theory.

\paragraph{Development of the scattering theory.}
For the three-dimensional defocusing energy-critical equation, Colliander,
Keel, Staffilani, Takaoka and Tao \cite{CKSTT} established scattering for
general energy-space data through frequency-localized interaction Morawetz
estimates and induction on energy. The difference-variable interaction
can follow concentration without choosing a fixed spatial center.
For the focusing equation, Kenig and Merle \cite{KM} proved the
below-ground-state result for radial data in dimensions three through five,
using concentration compactness and a rigidity argument. Killip and Visan
\cite{KV5} removed radial symmetry in dimensions at least five through
additional decay and negative regularity. Dodson \cite{Dodson4} treated
dimension four using long-time Strichartz estimates to overcome the
logarithmic obstruction. These results identify both the role of the
threshold and the sensitivity of the low-frequency argument to dimension.

Recent work on the three-dimensional nonradial problem includes
Chung--Han's conditional rigidity under a center-speed hypothesis
\cite{CHLow}, and their exclusion of the finite-mass bounded-scale channel
\cite{CH}. The latter leaves a residual quasi-soliton channel and does not
by itself prove scattering for all homogeneous-energy data.
Su and Zhao \cite{SZ} also study this threshold-scattering problem in a recent
preprint. Related interaction and moving-scale methods appear in
\cite{PV,DM,DMMZ,BDF}.

\paragraph{The homogeneous three-dimensional obstacles.}
The threshold inequalities first provide an unweighted coercive gap.
Energy conservation, \eqref{sharp} and continuity prevent $A(u(t))$ from
reaching $G$, since at such a time the energy would be at least $G/3$.
The quantitative energy trap \cite{KM} consequently gives
\[
 \sup_t A(u(t))\le B^2\le(1-\gamma)G,\qquad
 A(u(t))-V(u(t))\ge(2\gamma-\gamma^2)A(u(t))
\]
for fixed $\gamma>0$. This global inequality does not directly control
a focusing interaction with a spatially varying anisotropic Hessian.
The negative potential term must be compared with the appropriate
directional derivatives at each relative separation.

There is also an infrared obstruction. The energy controls
$\nabla u$ in $L^2$, but the sum of low-frequency $L^2$ contributions
can diverge. In particular, even the ground state has infinite mass in
dimension three. A Galilean normalization of momentum would introduce
a multiple of $u$ into the gradient and is therefore unavailable without
additional control. A fixed-center virial cutoff encounters the separate
difficulty that almost periodicity does not furnish a velocity or a
sublinear-drift bound for the center. We instead apply a translation-invariant
interaction to the finite-mass highpass $h=P_{>\nu}u$.
Its mass depends on $\nu$, so recovering positivity by constants involving
that mass would not close the argument.

Finally, projection is not compatible with the quintic nonlinearity.
The defect $P_{>\nu}F(u)-F(P_{>\nu}u)$ contributes both momentum and mass
sources. A bound that is merely finite, or that presupposes the desired
spacetime norm, cannot be absorbed. The source estimate must use the
same quartic quantity and the same terminal intervals as the positive
interaction term, with a coefficient tending to zero as $\nu\downarrow0$.

\paragraph{The spatial estimates and the Gaussian choice.}
The first part of our construction converts the scalar threshold gap into
a usable two-point tensor inequality. Convex phase reduction expresses the
complex-to-modulus difference as an explicit nonnegative square.
The angular estimate then treats every nonnegative radial weight for which
$rw(r)$ is bounded and nondecreasing. Its proof combines a one-direction
dilation, a two-dimensional inversion, a change of the longitudinal
variable, and a normalized incidence-measure identity on great circles.
The resulting constant is explicit and independent of the $L^2$ size.
A complementary convolution estimate is obtained through
a regularized Fisher-type contraction. The two lemmas have different
roles: one controls tangential geometry; the other retains the full
convolution quadratic form.

These estimates are combined for a single Gaussian family. Its radial and tangential Hessian components fit the
two lemmas, while the density form has a nonnegative Fourier multiplier. Its physical-space kernel changes sign, so the
proof keeps the complete quadratic form throughout the core--tail
decomposition. The spatial coercivity estimate gives an explicit surviving fraction
$77/81$ and a strictly positive core contribution on a homogeneous
neighborhood of the normalized compact orbit. All cutoff and error choices
precede the radius bound, and none depends on the projected total mass.

For a large fixed radius, positivity alone would weaken the quartic
control. The density-dependent selector resolves this by choosing the same
Gaussian overlap level at every time. Its root is comparable with
$a(t)=\|h(t)\|_4^4$, and the spatial coercivity applies at the normalized
radius. More importantly, the Gaussian profile determining the root is
also the derivative of the action with respect to radius. The weak
continuity equation then gives an implicit identity whose unfavorable
part is bounded by $a(t)\|r(t)\|_1^2$. This is a signed estimate; a
small bound on the absolute speed of the radius is not needed.

\paragraph{A spatial frequency-gap estimate and closure.}
The projected output is handled by holding the complete
density weights fixed and splitting only the solution slot.
Exact Fourier-support separation removes every low-weight far interaction.
The remaining three terms use the nonendpoint mappings
$I_1:L^2\to L^6$ and $I_2:L^{4/3}\to L^{12}$, followed by integer-order
highpass estimates. Balancing the near and far losses gives
\[
 \int_{I_j}|\mathfrak S_k(\mathfrak o_\nu,h)|\,dt
 \le C_u\sqrt{\delta_\nu}\,\mathfrak K_\nu(I_j),\qquad
 \delta_\nu:=\nu\sup_t\|P_{>\nu}u(t)\|_2\longrightarrow0.
\]
Local endpoint Strichartz remains necessary for the energy-class limit and
the final recovery of mass.

We first construct the actual $L^4$ and $L^1$ forcing
representatives before any time-integrability claim.
We then select a cutoff-independent sequence $T_j$
and control the forcing and its future maximal function by the projected
quartic integral. The input cancellation, output frequency gap and signed
radius estimate therefore fit one finite-interval absorption.
The complete identity is passed to the original weak energy
solution while retaining the original, unsmoothed selector.
Only after this absorption do we obtain $F(u)\in L^2_tL^1_x$.
For each fixed lowpass frequency, dual endpoint Strichartz and no-waste
Duhamel construct an actual $L^2$ value. Mass conservation is invoked after
that construction. The time limit is taken first and the frequency limit
second, forcing the mass to vanish and excluding the critical element.
The ordinary finite-size criterion then supplies both scattering states.

\paragraph{Conventions.}
Unlabelled spatial integrals are over $\R^3$, and
\[
 \widehat f(\xi):=\int e^{-ix\cdot\xi}f(x)\,dx,\qquad
 f(x)=(2\pi)^{-3}\int e^{ix\cdot\xi}\widehat f(\xi)\,d\xi,\qquad
 A:B:=\tr(A^TB).
\]
For vectors $a,b\in\R^3$ or $\C^3$, $a\otimes b$ denotes the outer
product, with $(a\otimes b)_{ij}:=a_i b_j$; complex conjugation is written
explicitly. The symbol $I$ denotes the identity operator or matrix on the
space in question. For real symmetric matrices, $A\succeq B$ means
$\xi^T(A-B)\xi\ge0$ for every real vector $\xi$.
We write $\partial_\omega f:=\omega\cdot\nabla f$ for the directional
weak derivative. The spaces $\mathcal S$ and $\mathcal S'$ are the Schwartz
space and its tempered-distribution dual, and $\mathbf1_E$ is the indicator
of a set $E$. A norm $\|\cdot\|_p$ without time variables is a spatial
$L^p$ norm. Fix a real radial $\varphi\in C_c^\infty(\R^3)$ with
$0\le\varphi\le1$, $\varphi|_{B_1}=1$ and
$\supp\varphi\subset\overline B_2$. For every $L>0$, define
\[
 \widehat{P_{\le L}f}(\xi):=\varphi(\xi/L)\widehat f(\xi),\qquad
 P_{>L}:=I-P_{\le L},\qquad P_L:=P_{\le L}-P_{\le L/2}.
\]
Frequency sums use dyadic scales. Each estimate retains its displayed
frequency domain. Implicit constants are independent of the projection
frequency, terminal time and selected Gaussian scale unless a dependence
is stated.

\paragraph{Organization.}
Section~2 gives the compactness reduction and interaction structure,
Section~3 the forcing comparison, Section~4 coercivity and scale selection,
and Section~5 the source estimates, energy-class limiting identity,
and rigidity argument.

\section{Compactness Reduction and a Moving Interaction Functional}
\subsection{A normalized nonscattering orbit}
Suppose Theorem~\ref{mainthm} fails. The profile decomposition
\cite{Keraani} and \cite{KM} give
a maximal critical solution with a compact normalized orbit on one
half-lifespan of infinite scattering size. If that direction is negative,
replace $u(t,x)$ by $\overline{u(-t,x)}$. Denote the resulting maximal
lifespan by $I\ni0$ and set $J:=I\cap[0,\infty)$,
$T_+:=\sup I\in(0,\infty]$.
Let $N:I\cap(0,\infty)\to(0,\infty)$ and
$x:I\cap(0,\infty)\to\R^3$ be the frequency and center modulations.
Extend the modulation to $t=0$ by $N(0)=1$, $x(0)=0$, adjoining $u(0)$
to the compact profile set. The resulting data satisfy
\begin{equation}\label{critical-data}
 \begin{gathered}
 0<E_c<E(W),\qquad E(u(t))=E_c,\qquad
 \sup_{t\in I}A(u(t))\le B^2<G,\\
 \mathcal K_0:=
 \overline{\{N(t)^{-1/2}u(t,x(t)+\cdot/N(t)):t\in J\}}^{\dot H^1}
 \text{ compact},\qquad 0\notin\mathcal K_0,
 \end{gathered}
\end{equation}
with $\|u\|_{L^{10}(J\times\R^3)}=\infty$. Spatial
translations occur in those propositions; the separate radial restriction
is \cite{KM}.

\emph{Symmetries and energy.} For $a>0$, $t_0\in I$ and $x_0\in\R^3$, put
\begin{align*}
 (\mathcal T_{a,t_0,x_0}u)(s,y)
 &:=a^{-1/2}u(t_0+s/a^2,x_0+y/a),
 & I_{\mathcal T u}&:=\{s:t_0+s/a^2\in I\},\\
 (\mathcal Ru)(s,y)&:=\overline{u(-s,y)},
 &I_{\mathcal Ru}&:=-I.
\end{align*}
The changes of variables in the weak derivatives and mild identity show
that both transforms solve \eqref{NLS}. They preserve the energy-space
quantities used below:
\[
 A(\mathcal Tu)=A(u),\qquad
 E(\mathcal Tu)=E(u)=E(\mathcal Ru).
\]
These are \cite{Tao}, extended by local approximation and uniqueness.
Moreover, if $f_n\to f$ in $\dot H^1$, then \eqref{sharp} gives
$f_n\to f$ in $L^6$, and therefore
\[
 E(f_n)\longrightarrow E(f).
\]
All limits below therefore retain the same $E_c$.

\emph{Vanishing linear endpoint.} Fix $t\in I$ and
$\psi\in C_c^\infty(\R^3)$. Compactness in \eqref{critical-data} gives
\[
 \forall\varepsilon>0\ \exists C_\varepsilon>0\ \forall T\in J:\quad
 \|\mathbf1_{|x-x(T)|>C_\varepsilon/N(T)}\nabla u(T)\|_2\le\varepsilon.
\]
With $r_T:=C_\varepsilon/N(T)$, Cauchy--Schwarz and unitarity give
\begin{align}
 \bigl|\langle\nabla u(T),e^{i(T-t)\Delta}\nabla\psi\rangle\bigr|
 &\le\varepsilon\|\nabla\psi\|_2
 +CB r_T^{3/2}\|e^{i(T-t)\Delta}\nabla\psi\|_\infty.
 \label{endpoint-pairing}
\end{align}
At a finite endpoint, local constancy gives $N(T)\to\infty$ and the
test evolution is bounded in $L^\infty$. At an infinite endpoint with
$N(T)\ge1$, dispersion gives $\|e^{i(T-t)\Delta}\nabla\psi\|_\infty
\le C_\psi|T-t|^{-3/2}$. Thus, letting $T\to T_+$ in
\eqref{endpoint-pairing} gives, in either case,
\[
 \limsup_{T\to T_+}|\langle\nabla u(T),e^{i(T-t)\Delta}\nabla\psi\rangle|
 \le\varepsilon\|\nabla\psi\|_2.
\]
Letting $\varepsilon\downarrow0$, and using the density of $C_c^\infty$
in $\dot H^1$ together with $\sup_T\|\nabla u(T)\|_2\le B$, we obtain
\[
 e^{i(t-T)\Delta}u(T)\rightharpoonup0\quad\text{in }\dot H^1
 \quad(T\to T_+).
\]
The mild identity consequently yields the sign-independent no-waste
formula of \cite{KV3}:
\begin{equation}\label{no-waste-formula}
 u(t)=-i\operatorname*{w-lim}_{T\to T_+}
 \int_t^T e^{i(t-s)\Delta}F(u(s))\,ds
 \quad\text{in }\dot H^1.
\end{equation}

\emph{Finite endpoints.} If $T_+<\infty$, then, for each $t\in I$,
\begin{align*}
 \|F(u)\|_{L^2([t,T_+);L^{6/5})}
 &\le CB^5(T_+-t)^{1/2},\\
 w_t:=-i\int_t^{T_+}e^{i(t-s)\Delta}F(u(s))\,ds
 &\in L^2,\qquad \|w_t\|_2\le CB^5(T_+-t)^{1/2}.
\end{align*}
The second line is dual endpoint Strichartz
\cite{KT,KM}.
The no-waste formula \eqref{no-waste-formula} identifies the weak derivatives:
\[
 \partial_jw_t=\partial_ju(t)\quad(j=1,2,3)\quad\text{in }\mathcal S'.
\]
Since $w_t\in L^2$ and $\nabla w_t=\nabla u(t)\in L^2$, one has
$w_t\in H^1\subset L^6$. The difference $w_t-u(t)$ belongs to $L^6$ and
has zero weak gradient, so $w_t=u(t)$. The standard $H^1$ mass law then gives
\[
 \|u(t_0)\|_2=\|u(t)\|_2
 \le CB^5(T_+-t)^{1/2}\longrightarrow0.
\]
Consequently $u(t_0)=0$ and $E_c=0$, contradicting \eqref{critical-data}.
The endpoint in the selected direction therefore satisfies $T_+=\infty$.
This agrees with \cite{KV5}; finite mass is a conclusion
of the displayed $L^2$ construction.

\emph{A lower frequency bound.} Use the normalized-limit selection of
\cite{KV5}, or the half-lifespan normalization preceding
\cite{KV3}. If $\inf_{t\ge0}N(t)>0$, one fixed dilation suffices.
Otherwise choose
\[
 t_n\to\infty,\qquad N_n:=N(t_n)\to0,\qquad
 N_n\le2\inf_{0\le t\le t_n}N(t),\qquad d_n:=t_nN_n^2,
\]
and define
\begin{align*}
 v_n(s,x)&:=N_n^{-1/2}u(t_n+s/N_n^2,x(t_n)+x/N_n),
       &&s\ge-d_n,\\
 N_{v_n}(s)&=N(t_n+s/N_n^2)/N_n\ge\tfrac12,
       &&-d_n\le s\le0,\\
 v_n(0)&\longrightarrow v(0)\quad\text{in }\dot H^1,
       &&E(v)=E_c,\quad\sup A(v)\le B^2.
\end{align*}
The last line, after subsequence extraction, uses compactness and stability
\cite{KM}. On each inherited compact
time interval the convergence is in $C_t\dot H^1$, with the same compactness
modulus and frequency lower bound. The native mild solution is identified
by homogeneous uniqueness in the critical local theory.

To prove $d_n\to\infty$, suppose along a subsequence that $d_n\to d_*<\infty$.
Finite-endpoint exclusion and stability give
\begin{align*}
 v_n(-d_n)&\longrightarrow v(-d_*)\quad\text{strongly in }\dot H^1,\\
 v_n(-d_n,x)&=N_n^{-1/2}u(0,x(t_n)+x/N_n)
              \rightharpoonup0\quad\text{in }\dot H^1.
\end{align*}
The second line is weak escape of critical dilates of the fixed datum
$u(0)$, uniformly in translations. Uniqueness of the weak limit yields
$v(-d_*)=0$, hence $E_c=0$, contradicting \eqref{critical-data}.
Hence $v$ exists on $(-\infty,0]$ with $N_v\ge1/2$; time reversal and one
fixed dilation give a forward-global solution satisfying
\begin{equation}\label{orbit}
 \K=\overline{\{N(t)^{-1/2}u(t,x(t)+\cdot/N(t)):t\ge0\}}^{\dot H^1}.
\end{equation}
The standing orbit data, used from now on, are
\begin{equation}\label{standing-orbit-data}
 \begin{gathered}
 t\ge0,\quad N(t)\ge1,\quad x(t)\in\R^3,\quad
 \K\text{ compact},\quad 0\notin\K,\\
 E(u(t))=E_c\in(0,E(W)),\qquad
 \sup_{v\in\K}A(v)\le B^2<G,\qquad
 \|u\|_{L^{10}([0,\infty)\times\R^3)}=\infty.
 \end{gathered}
\end{equation}
In particular \eqref{no-waste-formula} now holds with $T_+=\infty$.
For fixed $0<\nu<1$, define
\[
 h:=P_{>\nu}u,\qquad l:=u-h=P_{\le\nu}u,\qquad
 a(t):=\|h(t)\|_4^4,\qquad h\in C([0,\infty);H^1).
\]
The moving scale below is selected from $h$.

\subsection{Projected interaction balance}
For an even real kernel \(k\), set
\[
 \rho:=|h|^2,\qquad
 j:=\im(\bar h\nabla h),\qquad
 \mathbf T:=\re(\nabla\bar h\otimes\nabla h),
\]
and define
\begin{align*}
 Q_k(h)&:=-\iint\Delta^2k(x-y)\rho(x)\rho(y)\,dxdy,\\
 M_k(h)&:=2\iint \rho(y)\nabla k(x-y)\cdot j(x)\,dxdy.
\end{align*}
Introduce the two-point stress tensor
\[
 \mathbf S_h(x,y):=
 \rho(y)\mathbf T(x)-j(y)\otimes j(x)
 -\frac13|h(x)|^6\rho(y)I .
\]
The interaction principal form is then compressed to
\[
 \mathfrak B_k[h]
 :=Q_k(h)+4\iint D^2k(x-y):\mathbf S_h(x,y)\,dxdy .
\]
For the fixed projection frequency \(\nu\), let
\[
 E:=P_{>\nu}F(u)-F(h),\qquad
 q:=\nabla k*\rho,\qquad
 q_m(y):=\int\nabla k(x-y)\cdot j(x)\,dx.
\]
The forcing contribution is denoted by
\[
 \mathfrak S_k(E,h)
 :=-2\int q\cdot\re(E\nabla\bar h-h\nabla\bar E)
   -4\int q_m\,\im(E\bar h).
\]
For smooth fields the projected local conservation laws give
\[
 \frac{d}{dt}M_{k_{\lambda(t)}}(h)
 =\mathfrak B_{k_{\lambda(t)}}[h]
  +\mathfrak S_{k_{\lambda(t)}}(E,h)
  +\lambda'(t)\mathcal J_\lambda,
 \qquad
 \mathcal J_\lambda
 :=2\int j\cdot[(\partial_\lambda\nabla k_\lambda)*\rho].
\]
See \cite{PV,DM,BDF}. The same identity for energy-class solutions, with the same
forcing and moving-scale term, will be recovered below by smooth approximation.
The first coercive step is the following phase reduction.

\begin{lemma}[Convex phase reduction]\label{phase}
Let $h\in H^1(\R^3;\C)$ and let $k$ be an even real $C^4$ kernel.
Assume that $Q_k(h)=Q_k(|h|)$ is finite and that both scalar stress
contractions defining $\mathfrak B_k[h]$ and $\mathfrak B_k[|h|]$
are absolutely integrable on $\R^3\times\R^3$.
If \(D^2k\succeq0\), then
\[
 \mathfrak B_k[h]\ge \mathfrak B_k[|h|].
\]
\end{lemma}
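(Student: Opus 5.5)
The plan is to compare the two principal forms term by term. The quantities $Q_k$, $\rho=|h|^2$ and the potential term $\frac13|h|^6\rho I$ are unchanged when $h$ is replaced by $|h|$. For $|h|$ the current vanishes and $\mathbf T$ becomes $\nabla|h|\otimes\nabla|h|$. Hence
\[
 \mathfrak B_k[h]-\mathfrak B_k[|h|]
 =4\iint D^2k(x-y):\bigl[\rho(y)\bigl(\mathbf T(x)-\nabla|h|\otimes\nabla|h|(x)\bigr)-j(y)\otimes j(x)\bigr]\,dxdy ,
\]
and the goal is to show that the right side is nonnegative. The integrability hypotheses allow the difference to be taken inside the integral. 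Note also that $|h|\in H^1$ with $\nabla|h|=\re(\bar h\nabla h/|h|)$ on $\{h\ne0\}$ and $0$ elsewhere, while $\nabla h=0$ a.e. on $\{h=0\}$.

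The pointwise identity is the key step. On $\{h\neq0\}$, write $\nabla h=(\nabla|h|+i\,j/|h|)\,h/|h|$, which gives $\mathbf T=\nabla|h|\otimes\nabla|h|+j\otimes j/\rho$. Thus
\[
 \mathbf T-\nabla|h|\otimes\nabla|h|=\frac{j\otimes j}{\rho}\quad\text{a.e.},
\]
where both sides vanish on $\{h=0\}$ and we set $j/\sqrt\rho:=0$ there. Put $v:=j/\sqrt\rho\in L^2$; indeed $|v|\le|\nabla h|$. Then $j=\sqrt\rho\,v$, and the bracket equals
\[
 \rho(y)\,v(x)\otimes v(x)-\sqrt{\rho(x)\rho(y)}\,v(y)\otimes v(x).
\]

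Next, symmetrize using evenness of $k$, so that $D^2k(x-y)=D^2k(y-x)$ and the matrix is symmetric. Swapping $x\leftrightarrow y$ in the first term and averaging gives
\[
 \mathfrak B_k[h]-\mathfrak B_k[|h|]
 =2\iint D^2k(x-y):\bigl(\sqrt{\rho(y)}v(x)-\sqrt{\rho(x)}v(y)\bigr)^{\otimes2}\,dxdy .
\]
This uses $D^2k:(a\otimes b)=D^2k:(b\otimes a)$ and
\[
 (\sqrt{\rho(y)}v(x)-\sqrt{\rho(x)}v(y))^{\otimes 2}=\rho(y)v(x)^{\otimes2}+\rho(x)v(y)^{\otimes2}-\sqrt{\rho(x)\rho(y)}\,(v(x)\otimes v(y)+v(y)\otimes v(x)).
\]
Since $D^2k\succeq0$, the integrand is pointwise $\ge0$, which gives the claim.

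The main obstacle is justifying the splitting and symmetrization. The individual pieces $\iint D^2k\,\rho(y)\,v(x)^{\otimes2}$ and the cross term must be absolutely integrable separately, not merely in combination. I would handle this as follows. The cross term is bounded by the $j\otimes j$ part, which is contained in the stress contraction for $h$. The square part is controlled because $\rho(y)\,|D^2k|\,|v(x)|^2\le\rho(y)\,|D^2k|\,|\nabla h(x)|^2$, and this is exactly the magnitude of the $\mathbf T$ contraction. If absolute integrability of that individual contraction is not immediate from the hypothesis as stated, I would first prove the identity with $D^2k$ replaced by $\chi_R(x)\chi_R(y)D^2k$, using compactly supported cutoffs. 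There everything is finite since $h\in H^1$ and $D^2k$ is locally bounded. I would then let $R\to\infty$ by dominated convergence on the two absolutely integrable contractions and monotone convergence on the nonnegative square.
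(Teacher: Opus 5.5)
Your proposal is correct and is essentially the paper's proof: your $v=j/\sqrt\rho$ is the paper's $\Lambda=\im(\bar\zeta\nabla h)$, your bracket is its $D(x,y)$, and your square is its $H:Z\otimes Z$ with $Z(x,y)=g(y)\Lambda(x)-g(x)\Lambda(y)$. The paper avoids the integrability issue without cutoffs: $D$ is already in $L^1(\R^3\times\R^3)$ as the difference of the two integrable contractions, so one symmetrizes the whole integrand, $D(x,y)+D(y,x)=H:Z\otimes Z$ pointwise, and never splits off the square and cross terms separately (your cutoff fallback is nonetheless valid).
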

\begin{proof}
Define, for almost every $x\in\R^3$,
\[
 g:=|h|,\qquad
 \zeta(x):=\begin{cases}h(x)/g(x),&g(x)>0,\\0,&g(x)=0,\end{cases}
 \qquad \Lambda:=\im(\bar\zeta\nabla h).
\]
Polar factorization \cite{ZhengPolar}, with
$\hbar=1$, and its zero-set assertion \cite{ZhengPolar} give
\begin{equation}\label{polar-identities}
 \begin{gathered}
 g\in H^1,\quad\Lambda\in L^2,\quad
 j=g\Lambda,\quad
 \mathbf T=\nabla g\otimes\nabla g+\Lambda\otimes\Lambda,\\
 \Lambda=0\quad\text{a.e. on }\{g=0\}.
 \end{gathered}
\end{equation}
On $\R^3\times\R^3$, set
\begin{align*}
 H(x,y)&:=D^2k(x-y)=H(y,x),\\
 Z(x,y)&:=g(y)\Lambda(x)-g(x)\Lambda(y),\\
 D(x,y)&:=H(x,y):
 \bigl[g(y)^2\Lambda(x)\otimes\Lambda(x)
       -g(x)g(y)\Lambda(y)\otimes\Lambda(x)\bigr].
\end{align*}
The two integrable stress contractions in the hypotheses imply
$D\in L^1(\R^3\times\R^3)$. Consequently,
\begin{align*}
 \mathfrak B_k[h]-\mathfrak B_k[g]
 &=4\iint D(x,y)\,dxdy,
\\
 &=2\iint\bigl[D(x,y)+D(y,x)\bigr]\,dxdy,\\
 &=2\iint H(x,y):[Z(x,y)\otimes Z(x,y)]\,dxdy,\\
 &\ge0.
\end{align*}
For each Gaussian kernel used below, the required integrability follows from
\begin{gather*}
 \|D^2k_R\|_\infty+\|\Delta^2k_R\|_\infty<\infty,\qquad
 \|j\|_1\le\|h\|_2\|\nabla h\|_2,\qquad h\in H^1\subset L^6,\\
 |Q_{k_R}(h)|\le\|\Delta^2k_R\|_\infty\|h\|_2^4,\\
 \iint|D^2k_R:\mathbf S_h|
 \lesssim\|D^2k_R\|_\infty
 \bigl(\|h\|_2^2\|\nabla h\|_2^2+\|j\|_1^2
       +\|h\|_6^6\|h\|_2^2\bigr)<\infty.
\end{gather*}
The same bounds apply to $g$ by \eqref{polar-identities}.
\end{proof}

This removes the complex phase from the principal stress without weakening the interaction. We next record the translated Hardy estimate needed to control the resulting singular pairings.
For the fixed projected field $h=P_{>\nu}u\in H^1$, translated Hardy
\cite{DietzeNam} and Cauchy--Schwarz give
\begin{equation}\label{Hardy}
 \begin{aligned}
 \sup_x\int\frac{|h(y)|^2}{|x-y|}\,dy
 &\le\|h\|_2\sup_x\bigl\||x-\cdot|^{-1}h\bigr\|_2\\
 &\le2\|h\|_2\|\nabla h\|_2.
 \end{aligned}
\end{equation}
No uniform projected-mass bound is used in the coercivity constants.

\section{Low-Frequency Control and Time Selection}
\subsection{A mass-free forcing envelope}
The refined Sobolev inequality
\cite{MGO}
applies to $v\in\dot H^1$ with finite displayed Besov seminorm and has the form
\begin{equation}\label{refined}
 \|v\|_4^4\le C\left(\sup_{L>0}L^{-1}\|P_Lv\|_\infty\right)^2
                  \|\nabla v\|_2^2.
\end{equation}
The bound over all $L>0$ includes the dyadic Besov bound in this refinement.
This refinement yields the mass-free forcing envelope needed for the time selection.
\begin{lemma}\label{L4lemma}
For the selected critical element,
\[
 u\in L^\infty_tL^4_x,\qquad
 F(u)\in L^\infty_tL^1_x .
\]
\end{lemma}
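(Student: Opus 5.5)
The plan is a bootstrap on the Besov quantity appearing in the refined Sobolev inequality \eqref{refined}, fed by the no-waste formula \eqref{no-waste-formula} with $T_+=\infty$. Set
\[
 X:=\sup_{t\ge0}\ \sup_{L>0}L^{-1}\|P_Lu(t)\|_\infty .
\]
First, the second assertion follows from the first. By H\"older interpolation ($\tfrac15=\tfrac{2/5}{4}+\tfrac{3/5}{6}$) and \eqref{sharp},
\[
 \|F(u)\|_1=\|u\|_5^5\le\|u\|_4^2\|u\|_6^3\le C B^3\|u\|_4^2 .
\]
Next, \eqref{refined} and $\sup_tA(u(t))\le B^2$ give $\|u(t)\|_4^4\le C X^2B^2$. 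Combining,
\[
 \sup_t\|F(u(t))\|_1\le C B^4 X .
\]
So everything reduces to a bound $X<\infty$.

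To bound $X$, fix $t$ and $L$. Pair \eqref{no-waste-formula} with the smooth kernel of $P_L$ at a point $x$. This is legitimate, since $P_L$ composed with evaluation is a bounded functional on $\dot H^1$ and therefore commutes with the weak limit. This gives
\[
 |P_Lu(t,x)|\le\limsup_{T\to\infty}\int_t^T\|P_Le^{i(t-s)\Delta}F(u(s))\|_\infty\,ds .
\]
Use the frequency-localized dispersive bound
\[
 \|P_Le^{i\tau\Delta}f\|_\infty\lesssim\min(L^3,|\tau|^{-3/2})\|f\|_1 .
\]
Integrating in $\tau$ yields $\int_0^\infty\min(L^3,\tau^{-3/2})\,d\tau\lesssim L$, uniformly in $t$. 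Hence
\[
 L^{-1}\|P_Lu(t)\|_\infty\le C\sup_s\|F(u(s))\|_1\le CB^4X .
\]
This is linear in $X$ and does not close by itself. I will therefore keep the bound in the sharper form
\[
 L^{-1}\|P_Lu\|_\infty\le C\sup_s\|u(s)\|_4^2B^3\le C B^{4}X^{1/2}\|\nabla u\|_2^{1/2},
\]
that is, route through $\|u\|_4^2\lesssim (XB)\cdot(\text{norm})$ only to the square-root power. This gives $X\le C_B X^{1/2}$, and hence $X\le C_B^2$ provided $X$ is known a priori to be finite.

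The main obstacle is precisely this a priori finiteness, i.e.\ the low frequencies $L\to0$. For $L\ge L_0$, Bernstein gives $L^{-1}\|P_Lu\|_\infty\lesssim L^{-1/2}\|P_Lu\|_6\lesssim L_0^{-1/2}B$, so the truncated quantity $X_{L_0}:=\sup_t\sup_{L\ge L_0}$ is finite. I would run the bootstrap on $X_{L_0}$ and split $u=P_{\le L_0'}u+P_{>L_0'}u$ inside $F(u)$. The high part of $u$ is controlled by $X_{L_0}$ through \eqref{refined}. The low part, which is where compactness enters, is controlled using $N(t)\ge1$ and the compactness of $\K$ from \eqref{standing-orbit-data}: these make $\|\nabla P_{\le L_0'}u(t)\|_2$ small uniformly in $t$, so its contribution to $\|F(u)\|_1$ is absorbed after placing one factor in $L^6$ and using Bernstein on the remaining ones. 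This yields $X_{L_0}\le C_B+C_BX_{L_0}^{1/2}$ with constants independent of $L_0$. Letting $L_0\downarrow0$ gives $X<\infty$, which by the reduction above yields $u\in L^\infty_tL^4_x$ and then $F(u)\in L^\infty_tL^1_x$.

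If the low-frequency absorption proves too lossy, the fallback is the double-Duhamel route of \cite{KV5}, which pairs the forward and backward no-waste formulas. The time-reversed formula is available after the normalization leading to \eqref{orbit}.
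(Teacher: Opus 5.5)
Your reduction of the $L^1$ claim to finiteness of $X$, and the dispersive integral $\int_0^\infty\min\{L^3,\tau^{-3/2}\}\,d\tau\lesssim L$, agree with the paper. The bootstrap for $X$, however, has two genuine gaps.

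First, the ``sharper form'' is false. \eqref{refined} gives $\|u\|_4^2\le CXB$, so $\sup_s\|u(s)\|_4^2B^3\lesssim B^4X$ is linear in $X$. No sublinear version can exist: under $f\mapsto a^{1/2}f(a\,\cdot)$ both $\|f\|_5^5$ and $\sup_LL^{-1}\|P_Lf\|_\infty$ scale like $a^{-1/2}$, while $\|\nabla f\|_2$ is invariant. Your right-hand side $B^4X^{1/2}\|\nabla u\|_2^{1/2}$ scales like $a^{-1/4}$. A linear recursion can therefore only close with a small coefficient, which would have to come from the low part $v=P_{\le L_0'}u$ with $\|\nabla v\|_2\le\eta$.

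Second, that low part cannot be absorbed as you describe. $\|F(v)\|_1=\|v\|_5^5$ requires integrability below $L^6$, whereas Bernstein at low frequency only raises the exponent. The low-frequency part of a $\dot H^1$ function need not lie in $L^5$ at all. The natural handle, \eqref{refined} applied to $v$, reintroduces $\sup_{M<L_0}M^{-1}\|P_Mu\|_\infty$, which is exactly what $X_{L_0}$ omits. So the truncated inequality is circular, and there is no basis for constants uniform as $L_0\downarrow0$. Relatedly, your no-waste bound rests on $\sup_s\|F(u(s))\|_1$, which is the quantity not yet known to be finite.

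The missing idea is to make the splitting depend on the output frequency. For fixed $L$, the paper writes the small-gradient low part as $v=z+r$ with $r=P_{\le L/100}v$. Since $\widehat{F(r)}$ is supported in $\{|\xi|\le L/10\}$, one has $P_LF(r)=0$. The no-waste formula can then be run with the actual $L^1$ function $G_L=F(u)-F(r)$, which satisfies $P_LG_L=P_LF(u)$. Write $G_L=[F(u)-F(v)]+F(z)+[F(v)-F(z)-F(r)]$:
the first piece is bounded through the finite mass of $u-v$; the second is bounded by $\eta^4H_L$, using \eqref{refined} only at frequencies $\ge L/200$; each cross term carries a factor $z$ with $\|z\|_2\lesssim\eta/s$ and factors of $r$ in $L^\infty$, giving $\eta^4D_L$, where $D_L$ is a $2^{-n}$-weighted average of $b$ below $L$.

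The recurrence $b(L)\le K+C\eta^4(H_L+D_L)$ is then closed not by a sharp truncation but by the weighted supremum $A_\varepsilon=\sup_L\min\{1,(L/\varepsilon)^{3/4}\}b(L)$. This is finite a priori because $b(L)\lesssim BL^{-1/2}$ and $3/4>1/2$. It is compatible with the geometric decay in $D_L$ because $3/4<1$. The result is $A_\varepsilon\le2K$ uniformly in $\varepsilon$.

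Your double-Duhamel fallback does not rescue the argument. The orbit \eqref{orbit} is compact only for $t\ge0$, so no backward no-waste formula is available. In any case $\int_t^\infty\int_{-\infty}^t|s-\tau|^{-3/2}\,d\tau\,ds=\infty$, which is why that device is confined to high dimensions in \cite{KV5}.
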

\begin{proof}
\emph{Finite frequency rows.} For $L>0$, define
\[
 b(L):=L^{-1}\sup_{t\ge0}\|P_Lu(t)\|_\infty,\qquad
 A_*:=\sup_{L>0}b(L)\in[0,\infty].
\]
Bernstein and compactness give, in this order,
\begin{align}
 b(L)&\le CB L^{-1/2}<\infty,\qquad L>0,\label{band-initial}\\
 v&:=P_{\le L_0}u,\qquad w:=u-v,\\
 \sup_{t\ge0}\|\nabla v(t)\|_2&\le\eta,\label{small-v}
\end{align}
where, for each $\eta>0$, the frequency $L_0>0$ is chosen by compactness.
For the corresponding $L_0$ and every $t\ge0$, the high-pass estimate,
Sobolev embedding, interpolation, and the quintic H\"older bound give
\begin{align*}
 \|w\|_2&\lesssim L_0^{-1}B,\qquad \|w\|_6\lesssim B,\\
 \|w\|_3&\le\|w\|_2^{1/2}\|w\|_6^{1/2}
       \lesssim L_0^{-1/2}B,\\
 \|F(u)-F(v)\|_1
 &\lesssim\|w\|_3(\|u\|_6+\|v\|_6)^4
       \le C_{L_0,B}.
\end{align*}
For fixed $L>0$, put
\begin{gather*}
 s:=L/100,\qquad r:=P_{\le s}v,\qquad z:=P_{>s}v,\qquad v=z+r,\\
 H_L:=\sup_{M\ge L/200}b(M),\qquad
 D_L:=\frac1{100}\sum_{n\ge0}2^{-n}b(s2^{-n}).
\end{gather*}
Equation~\eqref{band-initial} gives $H_L,D_L<\infty$. If $M\le s/2$,
then $P_Mz=0$. If $M>s/2$, commutation gives
\[
 P_Mz=P_{>s}P_{\le L_0}P_Mu,
 \qquad
 M^{-1}\|P_Mz\|_\infty\lesssim H_L.
\]
Telescoping in $L^\infty$ gives
\begin{align*}
 r&=\sum_{n\ge0}P_{s2^{-n}}v,\\
 \sum_{n\ge0}\|P_{s2^{-n}}v\|_\infty
 &\le CB\sum_{n\ge0}(s2^{-n})^{1/2}<\infty,\\
 \sup_{M>0}M^{-1}\|P_Mz\|_\infty&\lesssim H_L,
 \qquad L^{-1}\|r\|_\infty\lesssim D_L.
\end{align*}
The telescoping remainder vanishes by \eqref{band-initial}.

\emph{An actual $L^1$ source.} The multiplier bounds and \eqref{small-v} give
$\|\nabla z\|_2+\|\nabla r\|_2\lesssim\eta$ and
$\|z\|_2\lesssim s^{-1}\eta$. Applying \eqref{refined} to $z$ gives
\begin{align*}
 \|F(z)\|_1
 &\le\|z\|_4^2\|z\|_6^3
 \lesssim (H_L\eta)\eta^3
 =\eta^4H_L,\\
 |F(z+r)-F(z)-F(r)|
 &\le C|r||z|(|z|+|r|)^3,\\
 \|F(v)-F(z)-F(r)\|_1
 &\le C\|r\|_\infty\|z\|_2(\|z\|_6+\|r\|_6)^3
 \lesssim \|r\|_\infty s^{-1}\eta^4
 \lesssim\eta^4D_L.
\end{align*}
Define the actual function
\[
 G_L:=F(u)-F(v)+F(z)+[F(v)-F(z)-F(r)]=F(u)-F(r).
\]
The preceding inequalities imply
\begin{equation}\label{L4source}
 \sup_{t\ge0}\|G_L(t)\|_1\le C_{L_0,B}+C\eta^4(H_L+D_L),
 \qquad P_LG_L=P_LF(u).
\end{equation}
For the second identity, the product-support rule gives
\[
 \supp\widehat r\subset\{|\xi|\le2s\},
 \qquad
 \supp\widehat{F(r)}\subset\{|\xi|\le10s\}.
\]
Since $10s=L/10<L/2$,
\[
 P_LF(r)=0,
 \qquad
 P_LG_L=P_LF(u).
\]
Here the product/convolution support rules are
\cite{Dyatlov}.

\emph{No-waste recurrence.} For $\tau>0$,
\begin{gather*}
 \|e^{-i\tau\Delta}P_L\|_{L^1\to L^\infty}
 \lesssim\min\{L^3,\tau^{-3/2}\},\\
 \int_0^\infty\min\{L^3,\tau^{-3/2}\}\,d\tau
 =L^3L^{-2}+\int_{L^{-2}}^\infty\tau^{-3/2}\,d\tau=3L.
\end{gather*}
For fixed $L$, Schwartz-kernel continuity and Fourier inversion
\cite{Dyatlov},
followed by Young \cite{Tao}, imply
\begin{gather*}
 \tau\longmapsto e^{-i\tau\Delta}P_L
       \in C\bigl(\R;\mathcal L(L^{6/5},L^\infty)\bigr),\qquad
 F(u)\in C_tL^{6/5}_x,\\
 \tau\longmapsto e^{-i\tau\Delta}P_LF(u(t+\tau))
       \quad\text{strongly measurable},\\
 \int_0^\infty\|e^{-i\tau\Delta}P_LG_L(t+\tau)\|_\infty\,d\tau
 \lesssim L\bigl[C_{L_0,B}+C\eta^4(H_L+D_L)\bigr]<\infty.
\end{gather*}
Thus the Bochner integral is defined. Its distributional identity with
\eqref{no-waste-formula}, using \eqref{L4source}, gives
\begin{equation}\label{L4recurrence}
 b(L)\le K+C\eta^4(H_L+D_L),\qquad K:=C_{L_0,B},\qquad L>0.
\end{equation}

\emph{Finite-supremum absorption.} Define
\begin{gather*}
 \alpha:=\tfrac34,\qquad
 w_\varepsilon(L):=\min\{1,(L/\varepsilon)^\alpha\},\qquad
 A_\varepsilon:=\sup_{L>0}w_\varepsilon(L)b(L),\qquad\varepsilon>0,\\
 A_\varepsilon\lesssim
 B\varepsilon^{-1/2}<\infty,\qquad
 C_\alpha:=200^\alpha+
 \frac{100^{\alpha-1}}{1-2^{-(1-\alpha)}}.
\end{gather*}
Combining \eqref{L4recurrence} with comparison of the weights and the geometric series gives
\[
 w_\varepsilon(L)(H_L+D_L)\le C_\alpha A_\varepsilon.
\]
Choose $\eta$ so that $C\eta^4C_\alpha\le\tfrac12$. Then
\[
 A_\varepsilon\le K+\tfrac12A_\varepsilon,
 \qquad
 A_\varepsilon\le2K.
\]
Taking $\varepsilon=L$ and then the supremum over $L$ gives
\[
 b(L)\le2K,
 \qquad
 A_*\le2K<\infty.
\]
Consequently,
\begin{align*}
 \sup_{t\ge0}\|u(t)\|_4^4&\le CA_*^2B^2<\infty,\\
 \sup_{t\ge0}\|F(u(t))\|_1&<\infty.
\end{align*}
The parameter order is: choose $\eta$ satisfying the second line,
then choose $L_0$ by \eqref{small-v}, then fix $K$; only afterwards vary
$\varepsilon$ and $L$.

\emph{Bochner representatives.} For every $t_0\ge0$,
\begin{align*}
 \sup_{L>0}L^{-1}\|P_L(u(t)-u(t_0))\|_\infty&\le2A_*,\\
 \|u(t)-u(t_0)\|_4^4
 &\lesssim
 A_*^2\|\nabla u(t)-\nabla u(t_0)\|_2^2\longrightarrow0.
\end{align*}
Continuity and the displayed uniform bounds give the two genuine
Bochner $L^\infty_t$ memberships in the statement.
\end{proof}
The lemma supplies both the bounded forcing amplitude and the time continuity required for the terminal-time argument. Set
\[
 f(t):=\|F(u(t))\|_1,\qquad U_4:=\|u\|_{L^\infty_tL^4_x},\qquad
 F_*:=\|F(u)\|_{L^\infty_tL^1_x}.
\]

\subsection{Small projections and admissible terminal times}
For $\nu>0$, define
\[
 \eta_\nu:=\sup_{t\ge0}\|\nabla P_{\le\nu}u(t)\|_2,
 \qquad\mathfrak m_\nu:=\sup_{t\ge0}\|P_{>\nu}u(t)\|_2,
 \qquad\delta_\nu:=\nu\mathfrak m_\nu.
\]
Compactness of \eqref{orbit} and $N\ge1$ give $\eta_\nu\to0$.
For every $c,\nu>0$, the exact smooth-multiplier identity yields
\begin{align*}
 P_{>\nu}u
 &=P_{>\nu}P_{\le c}u+(I-P_{\le\nu})P_{>c}u,\\
 \delta_\nu&\le C\eta_c+2C(\nu/c)B,\\
 \limsup_{\nu\downarrow0}\delta_\nu&\le C\eta_c.
\end{align*}
Since $c>0$ is arbitrary and $\eta_c\to0$, one obtains $\delta_\nu\to0$.
The two high-pass terms are actual $L^2$ functions; their $L^6$
representatives agree in $\mathcal S'$. No smooth-cutoff idempotence is used.
Thus
\begin{equation}\label{smallfreq}
 \varepsilon_{\rm lf}(\nu):=\eta_\nu+\delta_\nu\longrightarrow0.
\end{equation}
For the normalized profile
$\widetilde h_t(x):=N(t)^{-1/2}h(t,x(t)+x/N(t))$, the projection
error and continuity of the energy give, uniformly in $t\ge0$,
\[
 \operatorname{dist}_{\dot H^1}(\widetilde h_t,\K)\le\eta_\nu,
 \qquad |E(h(t))-E(u(t))|\le C(B+B^5)\eta_\nu.
\]
By \eqref{smallfreq}, these errors tend to zero as $\nu\downarrow0$.
The strict gradient and energy gaps in \eqref{standing-orbit-data}
therefore persist for $h$: the gradient norm changes by at most
$\eta_\nu$, while the energy changes by the displayed bound.
Choosing $\nu$ sufficiently small, independently of $t$, preserves both
threshold inequalities. Critical rescaling preserves the same gaps for
$\widetilde h_t$.
Compactness, $0\notin\K$, local Sobolev and rescaling give
\begin{gather*}
 \exists R_0,c_0>0\ \forall v\in\K:\quad\|v\|_{L^4(B_{R_0})}\ge c_0,
 \qquad\|\widetilde h_t\|_{L^4(B_{R_0})}\ge c_0/2\quad(\nu\text{ small}),\\
 \inf_{t\ge0}N(t)a(t)>0,\qquad
 \sup_{t\ge0}a(t)<\infty.
\end{gather*}
Only the lower bound on $N(t)a(t)$ is used.

For $t\ge0$ and a bounded interval $I\subset[0,\infty)$, define
\[
 \mathcal I_t:=\{[t,t+2^m]:m\in\mathbb Z\},\qquad
 \mathcal Y(t):=\sup_{I\in\mathcal I_t}|I|^{-1}\int_I f,
 \qquad\mathfrak K_\nu(I):=\int_Ia(t)\,dt.
\]
The column $\begin{pmatrix}f\\\mathcal Y\end{pmatrix}$ denotes the
$\R^2$-valued function $t\mapsto(f(t),\mathcal Y(t))$, with the Euclidean
norm. In particular, on every bounded interval $J$,
\[
 \left\|\begin{pmatrix}f\\\mathcal Y\end{pmatrix}\right\|_{L^2(J;\R^2)}^2
 =\int_J\bigl(f(t)^2+\mathcal Y(t)^2\bigr)\,dt.
\]
We now select terminal times on which the forcing envelope and its future maximal function are controlled by the projected quartic quantity.
\begin{proposition}\label{comparison}
There is a sequence $T_j\uparrow\infty$, independent of $\nu$, such that for all sufficiently small $\nu$,
\begin{equation}\label{fcompare}
 \left\|\begin{pmatrix}f\\\mathcal Y\end{pmatrix}\right\|_{L^2(0,T_j;\mathbb R^2)}
 \le C_u\mathfrak K_\nu([0,T_j])^{1/2}.
\end{equation}
Moreover,
\begin{equation}\label{lowpoint}
 \sup_{L>0}\left[
 L^{-1}\|P_Lu(t)\|_\infty+L^{-2}\|\nabla P_Lu(t)\|_\infty
 \right]\le C\mathcal Y(t).
\end{equation}
Dyadic summation gives the corresponding low-pass bounds at every positive frequency.
\end{proposition}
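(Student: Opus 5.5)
I would prove \eqref{lowpoint} first, since it is a pointwise consequence of the no-waste formula and is independent of the time selection. Then I would derive \eqref{fcompare} from the maximal inequality together with a lower bound of $f$-integrals in terms of $a$ along well-chosen terminal times.

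\emph{Step 1: the pointwise bound \eqref{lowpoint}.} Fix $t\ge0$ and $L>0$. Apply $P_L$ and $\nabla P_L$ to \eqref{no-waste-formula}, and use the Bochner representative from the proof of Lemma~\ref{L4lemma}, so that
\[
 \|P_Lu(t)\|_\infty\le\int_0^\infty\|e^{-i\tau\Delta}P_L\|_{L^1\to L^\infty}f(t+\tau)\,d\tau .
\]
Insert the kernel bound $\min\{L^3,\tau^{-3/2}\}$ and its gradient analogue $L\min\{L^3,\tau^{-3/2}\}$. Then split $[0,\infty)$ into the dyadic intervals $[0,2^{m}]$ with $2^m$ near $L^{-2}$, and the shells $[2^{m},2^{m+1}]$ for $2^m\ge L^{-2}$.
\begin{itemize}
\item On the first interval the contribution is at most $L^3\cdot L^{-2}\,\mathcal Y(t)$.
\item On each shell it is at most $2^{-3m/2}\,2^{m+1}\,\mathcal Y(t)$.
\end{itemize}
Summing the geometric series gives $\lesssim L\,\mathcal Y(t)$, with an extra factor $L$ for the gradient. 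This is exactly \eqref{lowpoint}, and dyadic summation of these bounds gives the low-pass statement.

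\emph{Step 2: reduction of \eqref{fcompare} to $f$.} The operator $f\mapsto\mathcal Y$ is dominated by a one-sided Hardy--Littlewood maximal function. The intervals $[t,t+2^m]$ can leave $[0,T_j]$, so I would first handle the restriction. I would control $\int_0^{T_j}\mathcal Y^2$ by $\int_0^{2T_j}f^2$ plus a tail. The tail is bounded using $f\le F_*$ and the factor $|I|^{-1}$ for the long intervals.

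This is where the selection of $T_j$ enters. I would choose $T_j$ to be times at which the running averages do not grow, namely
\[
 \int_0^{2T}f^2\le C\int_0^{T}f^2 .
\]
Such times must exist along a sequence. Otherwise the function $T\mapsto\int_0^Tf^2$ would grow geometrically on dyadic scales, contradicting the bound $\int_0^Tf^2\le F_*^2T$. If $\int f^2<\infty$ the selection is trivial, and any $T_j$ works because the quantity is then saturated.

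The resulting sequence depends only on $f$, hence only on $u$, and not on $\nu$, as the statement requires.

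\emph{Step 3: comparing $f$ with $a$ (main obstacle).} It remains to show
\[
 \int_0^{T_j}f^2\le C_u\int_0^{T_j}a .
\]
Pointwise, $f^2\le F_*f$ and $f=\|u\|_5^5\le\|u\|_4^{4\theta}\|u\|_6^{6(1-\theta)}$ by interpolation. The difficulty is replacing $u$ by $h=P_{>\nu}u$ uniformly in small $\nu$.

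I would use the lower bound $\inf_t N(t)a(t)>0$ together with $N\ge1$ and compactness. On the normalized profile, $\|u\|_5^5$ is comparable to $N(t)^{-1/2}$ times a profile quantity, and $a(t)$ is comparable to $N(t)^{-1}$ times a profile quantity bounded below. This heuristically gives
\[
 f(t)^2\lesssim N(t)^{-1}\lesssim a(t),
\]
uniformly for $\nu$ below a threshold fixed by $c_0$.

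Two points need care. First, one must check that the upper bound on the profile quantity of $\|u\|_5^5$ holds uniformly on $\K$. Second, the low-frequency error $u-h$ must not spoil the lower bound on $a$; this is handled by $\eta_\nu\to0$ and by the estimate $\|\widetilde h_t\|_{L^4(B_{R_0})}\ge c_0/2$ recorded before the proposition. I expect this scaling comparison, and in particular its uniformity in $\nu$, to be the hardest step. If it fails pointwise, I would fall back to comparing the integrals along the chosen $T_j$.
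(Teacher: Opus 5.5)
Your Step 1 is correct and matches the paper. You use dyadic time shells where the paper uses a layer-cake bound for the decreasing kernel $L^{-1}\min\{L^3,s^{-3/2}\}$, but the estimate is the same.

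\textbf{Step 3 has a genuine gap.} The quantity $f=\|u\|_5^5$ contains the low frequencies $l=P_{\le\nu}u$, and nothing you invoke controls them in $L^4$ or $L^5$.
\begin{enumerate}
\item The smallness $\eta_\nu\to0$ bounds only $\|\nabla l\|_2$, hence $\|l\|_6$.
\item Compactness of $\K$ in $\dot H^1$ gives no uniform $L^5$ bound on profiles, since $\|\cdot\|_5$ is not even finite on $\dot H^1$.
\item Lemma~\ref{L4lemma} gives only $\|v_t\|_4^4=N(t)\|u(t)\|_4^4\le N(t)U_4^4$ for $v_t:=N(t)^{-1/2}u(t,x(t)+\cdot/N(t))$, and this is not uniform because $N(t)$ may be unbounded.
\end{enumerate}
So the heuristic $f^2\lesssim N^{-1}\lesssim a$ is unsupported. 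The bound $\|\widetilde h_t\|_{L^4(B_{R_0})}\ge c_0/2$ bounds $a$ from below, not $f$ from above. Your fallback, ``compare integrals along $T_j$'', comes with no mechanism.

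The missing idea is to feed \eqref{lowpoint} back into the comparison. It gives $\sup_LL^{-1}\|P_Ll(t)\|_\infty\lesssim\mathcal M^+f(t)$, so \eqref{refined} yields $\|l\|_4^2\lesssim\eta_\nu\mathcal M^+f$. Together with $\|h\|_5^5\le\|h\|_4^2\|h\|_6^3$, this gives the pointwise bound
\[
 f(t)\le C_Ba(t)^{1/2}+C_B\eta_\nu^4\mathcal M^+f(t).
\]
Take $L^2(0,T_j)$ norms and use $\|\mathcal M^+f\|_{L^2(0,T_j)}\le C_M\|f\|_{L^2(0,T_j)}$. The second term is then absorbed once $C_B\eta_\nu^4C_M\le\tfrac12$. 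This is where smallness of $\nu$ enters, and the comparison is integrated, not pointwise.

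\textbf{The absorption needs a selection rule stronger than yours.} It requires the maximal bound on $[0,T_j]$ for all windows, including long ones. Your doubling condition $E_f(2T)\le CE_f(T)$ controls windows inside $[0,2T]$. However, for $t\le T$, $\mathcal Y(t)$ also includes windows of length $s>T$, and these see $f$ on $[T,t+s]$ for arbitrarily large $s$.

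Bounding those windows by $F_*$, as you propose, gives $F_*^2T$. This need not be $\lesssim E_f(T)$, for instance if $f$ is small on $[0,2T]$ and large later.

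What is needed is
\[
 \sup_{s\ge T}E_f(s)/s\le 2E_f(T)/T.
\]
The paper obtains this by choosing $T\ge R$ with $E_f(T)/T>\tfrac12\sup_{s\ge R}E_f(s)/s$. Then, for $s>T$,
\[
 \Bigl(s^{-1}\int_t^{t+s}f\Bigr)^2\le E_f(t+s)/s\le 4E_f(T)/T,
\]
and the same choice also implies your condition $E_f(2T)\le4E_f(T)$.
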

\begin{proof}
\emph{Maximal functions.} Extend $f$ by zero on $(-\infty,0)$ and put
\[
 \mathcal M^+f(t):=\sup_{s>0}s^{-1}\int_t^{t+s}f(\tau)\,d\tau.
\]
Every positive window is contained in a dyadic window of length at most
twice its length. Therefore
\begin{equation}\label{dyadicfull}
 0\le\mathcal Y(t)\le\mathcal M^+f(t)\le2\mathcal Y(t)\le2F_*.
\end{equation}
Continuity of the primitive gives the countable representation
\[
 \mathcal M^+f(t)=\sup_{s\in\mathbb Q_{>0}}s^{-1}\int_t^{t+s}f;
 \qquad \mathcal Y,\mathcal M^+f\text{ are measurable}.
\]

\emph{Terminal times, before $\nu$.} For $T>0$, set
\[
 E_f(T):=\int_0^Tf^2,\qquad A_f(T):=E_f(T)/T\in[0,F_*^2].
\]
For each $R>0$, let $S_R:=\sup_{s\ge R}A_f(s)$. Choose $T\ge R$ by
\[
 S_R=0:\ T:=R;\qquad
 S_R>0:\ A_f(T)>S_R/2.
\]
Then
\[
 A_f(s)\le2A_f(T)\qquad(s\ge T).
\]
Choose $R\ge\max\{j+1,T_{j-1}+1\}$. The resulting terminal times satisfy
\[
 T_j>T_{j-1},\qquad T_j\to\infty,
 \qquad E_f(2T_j)\le4E_f(T_j).
\]
This construction depends on $f$ only.

\emph{An $L^2$ bound on each selected interval.} Fix $T=T_j$.
For $0\le t\le T$ and $0<s\le T$, one has $[t,t+s]\subset[0,2T]$.
If $s>T$, then
\[
 \left(s^{-1}\int_t^{t+s}f\right)^2
 \le E_f(t+s)/s
 \le 2\frac{t+s}{s}\frac{E_f(T)}T
 \le4\frac{E_f(T)}T.
\]
The first line uses $f\mathbf1_{[0,2T]}$. The one-sided maximal function
is bounded by twice the centered maximal function; hence
\cite{TaoMax} gives
\begin{equation}\label{selected-maximal-L2}
 \begin{aligned}
 \int_0^T(\mathcal M^+f)^2
 &\le C_{\rm HL}E_f(2T)+4E_f(T)\\
 &\le(4C_{\rm HL}+4)E_f(T).
 \end{aligned}
\end{equation}

\emph{All positive spatial frequencies.} For $L>0$, set
$k_L(s):=L^{-1}\min\{L^3,s^{-3/2}\}$, $s>0$. Then
\begin{gather*}
 k_L\ge0,\quad k_L\text{ decreasing},\quad\int_0^\infty k_L=3,
 \qquad\int_0^rf(t+s)\,ds\le r\mathcal M^+f(t),\\
 \int_0^\infty k_L(s)f(t+s)\,ds
 \le\mathcal M^+f(t)\int_0^\infty k_L(s)\,ds
 =3\mathcal M^+f(t).
\end{gather*}
The second line follows by positive layer integration. Applying it to the
actual absolutely convergent annular integral in Lemma~\ref{L4lemma},
\begin{align*}
 L^{-1}\|P_Lu(t)\|_\infty
 &\lesssim\int_0^\infty k_L(s)f(t+s)\,ds
 \lesssim\mathcal Y(t),\\
 P_{\le L}u&=\sum_{n\ge0}P_{L2^{-n}}u\quad\text{in }L^\infty
 \qquad\|P_{\le L}u(t)\|_\infty\lesssim L\mathcal Y(t),\\
 \|\nabla P_{\le L}u(t)\|_\infty
 &\lesssim L\|P_{\le L}u(t)\|_\infty
 \lesssim L^2\mathcal Y(t).
\end{align*}
Since $P_L=P_{\le L}-P_{\le L/2}$, these estimates prove
\eqref{lowpoint}, including its bound for the actual weak gradient.

\emph{Uniform absorption.} Let $l=P_{\le\nu}u$, $h=u-l$ and
$C_M:=(4C_{\rm HL}+4)^{1/2}$. Then
\begin{align*}
 \sup_{L>0}L^{-1}\|P_Ll(t)\|_\infty
 &\lesssim\mathcal M^+f(t),\\
 \|l(t)\|_4^2
 &\lesssim
 \eta_\nu\mathcal M^+f(t),\qquad\|l(t)\|_6\lesssim\eta_\nu,
\\
 f(t)&\le C_B a(t)^{1/2}+C_B\eta_\nu^4\mathcal M^+f(t),\\
 \|f\|_{L^2(0,T_j)}
 &\le
 C_B\mathfrak K_\nu([0,T_j])^{1/2}
 +C_B\eta_\nu^4C_M\|f\|_{L^2(0,T_j)}.
\end{align*}
Choose $\nu_0>0$ so that $C_B\eta_\nu^4C_M\le1/2$ whenever
$0<\nu<\nu_0$. By \eqref{dyadicfull} and \eqref{selected-maximal-L2}, for
every such $\nu$ and every $j$,
\begin{align*}
 \|f\|_{L^2(0,T_j)}&\le2C_B\mathfrak K_\nu([0,T_j])^{1/2},\\
 \|\mathcal Y\|_{L^2(0,T_j)}
 &\le C_M\|f\|_{L^2(0,T_j)},\\
 \left\|\begin{pmatrix}f\\\mathcal Y\end{pmatrix}\right\|_{L^2(0,T_j;\R^2)}^2
 &=\int_0^{T_j}(f^2+\mathcal Y^2)
 \le4C_B^2(1+C_M^2)\mathfrak K_\nu([0,T_j]).
\end{align*}
All norms before absorption are finite: $f,\mathcal Y,\mathcal M^+f$
are bounded and measurable, and $h$ is a bounded continuous $L^4$ curve.
The displayed vector is therefore an actual Bochner $L^2$ function.
\end{proof}
The comparison estimate fixes the terminal sequence and the low-frequency control used from this point onward. We now turn to the spatial positivity mechanism.

\section{One-Scale Positivity and a Dynamic Scale}
\subsection{A directional inequality for monotone weights}
For real $v\in H^1(\R_s\times\R_y^2)$ and $a>0$, define
\[
 V:=\int|v|^6,\quad A_\perp:=\int|\nabla_yv|^2,\quad
 A_s:=\int|\partial_sv|^2,\qquad v_a(s,y):=v(as,y).
\]
The weak chain rule \cite{Hunter} and linear
change of variables \cite{Hunter} give
\begin{align*}
 \int|v_a|^6&=a^{-1}V,&
 \int|\nabla_yv_a|^2&=a^{-1}A_\perp,&
 \int|\partial_sv_a|^2&=aA_s.
\end{align*}
Applying \eqref{sharp} to $v_a$ and writing $q:=a^2$ gives
\[
 G^2qV\le(A_\perp+qA_s)^3\qquad(q>0).
\]
For $A_\perp,A_s>0$, choose $q=A_\perp/(2A_s)$; for a zero energy, first
replace both energies on the right by $A_\perp+\epsilon,A_s+\epsilon$,
optimize, and let $\epsilon\downarrow0$. In both cases,
\begin{equation}\label{aniso}
 \int_{\R\times\R^2}|v|^6
 \le\frac{27}{4G^2}\left(\int|\nabla_yv|^2\right)^2
                         \int|\partial_sv|^2.
\end{equation}
For $z\ne0$, write $\widehat z:=z/|z|$; thus $P_z$ below is the
orthogonal projection onto $z^\perp$. At the null diagonal we use
$\widehat0:=0$ and $P_0:=I$.
The anisotropic bound is converted into a two-point tangential estimate by the following lemma.
\begin{lemma}[Monotone angular weights]\label{angularlemma}
Let $w\ge0$ and suppose $b(r)=rw(r)$ is bounded and nondecreasing. For nonnegative $g\in H^1(\R^3)$, define
\[
 A_g=\int|\nabla g|^2,\quad
 X_w(g)=\iint g(y)^2w(|x-y|)|P_{x-y}\nabla g(x)|^2dxdy,
\]
\[
 N_w(g)=\iint g(y)^2w(|x-y|)g(x)^6dxdy,
 \qquad P_z=I-\widehat z\otimes\widehat z.
\]
Then
\begin{equation}\label{angularbound}
 N_w(g)\le\frac{243}{4}\frac{A_g^2}{G^2}X_w(g).
\end{equation}
In particular, $A_g\le\sqrt2G/9$ implies $4X_w-(8/3)N_w\ge0$.
\end{lemma}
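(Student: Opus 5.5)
The plan is to reduce \eqref{angularbound} to the anisotropic Sobolev inequality \eqref{aniso}, applied with the distinguished one-dimensional direction $\omega$ chosen \emph{tangential} to the separation $x-y$. In that orientation, $\int|\partial_\omega g|^2$ enters linearly and is the quantity that $X_w$ measures. The two transverse energies are bounded crudely by $A_g$, which accounts for the factor $A_g^2/G^2$. The factor $243/4=9\cdot 27/4$ should come from $27/4$ in \eqref{aniso} together with a factor coming from averaging over directions and from the weight normalization. The consequence $4X_w-(8/3)N_w\ge0$ is then immediate: if $A_g\le\sqrt2G/9$, then $\frac{243}{4}\frac{A_g^2}{G^2}\le\frac{243}{4}\cdot\frac{2}{81}=\frac32$, so $\frac83N_w\le 4X_w$.

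\emph{Step 1 (reduction of the weight).} Since $b(r)=rw(r)$ is bounded and nondecreasing, write $b(r)=b(0+)+\int_{(0,r)}db(\rho)$ with $db\ge0$. Then $w(r)=\int \mathbf1_{\{r>\rho\}}r^{-1}\,d\mu(\rho)$ for a nonnegative measure $\mu$, where the atom at $\rho=0$ gives $r^{-1}$ itself. Both $N_w$ and $X_w$ are linear in $w$ with the same nonnegative measure. It therefore suffices to prove \eqref{angularbound}, with the same constant, for the extremal weights $w_\rho(r)=r^{-1}\mathbf1_{\{r>\rho\}}$, $\rho\ge0$. The critical case is $w=r^{-1}$, and the truncations should follow from the same argument restricted to the region $|x-y|>\rho$. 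Finiteness of all quantities follows from \eqref{Hardy} and $g\in H^1$.

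\emph{Step 2 (polar slicing and a weighted form of \eqref{aniso}).} Fix $y$ and write $x=y+r\theta$ with $\theta\in S^2$. The weight $r^2w(r)\,dr\,d\theta=rb(r)\,dr\,d\theta$ is one-dimensional in $r$. For each great circle $\Gamma_\omega=\{\theta\perp\omega\}$, the derivatives $\partial_\omega g$ with $\omega\perp\theta$ are exactly the components of $P_\theta\nabla g$. The normalized incidence identity $\int_{S^2}\int_{\Gamma_\omega}F(\theta,\omega)\,d\sigma\,d\omega=\int_{S^2}\int_{\theta^\perp\cap S^2}F\,d\sigma\,d\theta$ then converts $X_w$ into an average over $\omega$ of weighted integrals of $|\partial_\omega g|^2$. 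For a fixed $\omega$, one needs a weighted analogue of \eqref{aniso} in the splitting $\R_s\omega\oplus\omega^\perp$, where the weight $\int g(y)^2w(|x-y|)\,dy$ depends on $x$. I would obtain it as follows. Restrict to the plane through $y$ containing $\omega$. Apply a two-dimensional inversion centered at $y$ in the transverse variables, which turns the factor $1/r$ into a flat measure. Then change the longitudinal variable $s\mapsto s/r$, or equivalently apply a one-direction dilation, so that the dilation argument preceding \eqref{aniso} applies fiberwise with the optimized parameter $q$. Monotonicity of $b$ is what should make the Jacobian factors go in the favorable direction under this change of variables. Integrating in $y$ against $g(y)^2$ and averaging in $\omega$ then gives \eqref{angularbound}.

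\emph{Main obstacle.} The hard step is the weighted, localized version of \eqref{aniso} in Step 2. The sharp Sobolev inequality \eqref{sharp} is a global statement, so it must be applied to a genuine $\dot H^1(\R^3)$ function, for instance the pullback of $g$ under the inversion--dilation map. One must also check that the transverse energies of this pullback are still bounded by $A_g$ and not by a weighted energy. I would try first to choose the change of variables so that it is conformal in the two transverse directions; the transverse Dirichlet energy is then exactly invariant in two dimensions, so only the longitudinal factor changes. If that fails, I would fall back on proving the estimate only for $w=r^{-1}$ and treating the truncations by a cutoff argument, which may cost a worse constant.
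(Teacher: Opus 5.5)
Your outer frame matches the paper's. The distinguished direction $\omega$ must be orthogonal to the separation, the transverse energy is bounded by $A_g$, and a great-circle incidence identity assembles fixed-direction pair integrals into $N_w$ and $X_w$. But Step~2, which carries the whole proof, is set up incorrectly, and the mechanisms you propose would not close it.

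\begin{itemize}
\item After the incidence identity, a fixed $\omega$ does not see the three-dimensional convolution weight $\int g(y)^2w(|x-y|)\,dy$. An inequality with that weight and $|\partial_\omega g|^2$ would average over $\omega$ to the full gradient, not to $P_{x-y}\nabla g$.
\item What a fixed $\omega$ actually sees are coplanar pairs $x=s\omega+y$, $x'=s\omega+y'$ with $y,y'\in\omega^\perp$, with measure $ds\,dy\,dy'$ and kernel $b(|y-y'|)$.
\item The factor $|x-x'|^{-1}$ in $w$ comes entirely from the incidence normalization: $\nu_z$ is $|z|^{-1}$ times arclength on the great circle $\{\omega\cdot z=0\}$, with $\int d\nu_z=2\pi/|z|$ and $\int|\omega\cdot\xi|^2\,d\nu_z=\pi|P_z\xi|^2/|z|$. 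This also supplies the factor $\tfrac12$ in $243/4$. No inversion is needed to ``flatten'' $1/r$.
\item Restricting to a plane through the partner point leaves a two-dimensional problem, to which the three-dimensional sharp Sobolev inequality does not apply. Only the transverse coordinate $y'$ of the partner should be fixed, with $s\in\R$ and $y\in\omega^\perp$ free.
\item The partner then contributes the weight $g(s,y')^2$, which varies along $s$. This variation is the real obstacle.
\item Neither proposed fix addresses it. The map $s\mapsto s/r$ is not a change of variables, since $r$ depends on $y$. A fiberwise choice of $q$ is unavailable, since sharp Sobolev is global and $q$ must be a single constant.
\end{itemize}

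You are missing two ingredients.

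\emph{Weighted anisotropic inequality.} You need a form of \eqref{aniso} with the weight on $|v|^6$ and $|\partial_sv|^2$ only:
$\int b(|y|)|v|^6\le\frac{54}{G^2}\bigl(\int|\nabla_yv|^2\bigr)^2\int b(|y|)|\partial_sv|^2$ for bounded nondecreasing $b$.
\begin{itemize}
\item For $b=\mathbf1_{\{|y|>R\}}$, reflect the exterior of the cylinder inside by $y\mapsto R^2y/|y|^2$.
\item Two-dimensional conformal invariance doubles the transverse energy, and the longitudinal energy at most doubles.
\item Then \eqref{aniso} gives the constant $\frac{27}{4}\cdot8=54$.
\item Monotonicity of $b$ enters only through the layer-cake decomposition into these exterior-disk indicators, placed in the transverse variable, not through Jacobians.
\end{itemize}

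\emph{Longitudinal reparametrization.} At fixed $y'$, set $d\tau=(g(s,y')^2+\epsilon)^{-1/2}ds$ and $v=(g(s,y')^2+\epsilon)^{1/4}g$.
\begin{itemize}
\item Then $\int v^6\,d\tau=\int(g(s,y')^2+\epsilon)g^6\,ds$, which absorbs the moving weight, and the transverse energy is unchanged.
\item Also $\int b|v_\tau|^2\,d\tau\,dy=\int bR_\epsilon\,ds\,dy$. Here the completed square $R_\epsilon$ is at most $\frac32g(s,y')^2g_s(s,y)^2+\frac34g(s,y)^2g_s(s,y')^2+\epsilon g_s(s,y)^2$.
\item Symmetrizing in $(y,y')$ gives $\frac94 I_b$, hence $N_b\le\frac{243}{2G^2}A_\perp^2I_b$.
\item Circle averaging then gives $\frac{243}{4}=\frac{27}{4}\cdot8\cdot\frac94\cdot\frac12$.
\end{itemize}

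Your fallback of handling the truncations by a cutoff, at the price of a worse constant, would not suffice. The exact constant is what yields $4X_w-\frac83N_w\ge0$ for $A_g\le\sqrt2G/9$, and it is also the coefficient $162a_g$ used in \eqref{tailsmall}.
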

\begin{proof}
\emph{1. Exterior disks and monotone weights.} We first prove, for every
bounded nonnegative nondecreasing $b$ and real $v\in H^1(\R\times\R^2)$,
\begin{equation}\label{weightedaniso}
 \int b(|y|)|v|^6
 \le\frac{54}{G^2}\left(\int|\nabla_yv|^2\right)^2
                    \int b(|y|)|\partial_sv|^2.
\end{equation}
Start with $v\in C_c^\infty$ and $R>0$. Set
\[
 v_R(s,y):=
 \begin{cases}
 v(s,R^2y/|y|^2),&0<|y|<R,\\
 0,&y=0,\\
 v(s,y),&|y|\ge R.
 \end{cases}
\]
Choose $M>0$ with $\supp v\subset\{|s|\le M,\ |y|^2\le M\}$.
For $0<|y|^2<R^4/M$,
\[
 |R^2y/|y|^2|^2>M,
 \qquad
 v_R(s,y)=0.
\]
On $|y|=R$,
\[
 R^2y/|y|^2=y,
 \qquad
 \nabla_{s,y}(|y|^2-R^2)=(0,2y)\ne0.
\]
Thus the extension is smooth near the axis, continuous across the circle,
and compactly supported. Sobolev gluing \cite{Schoeberl}
gives $v_R\in H^1$ with the piecewise classical weak derivatives.
The inversion Jacobian and chain rule imply
\begin{align*}
 \int|\nabla_yv_R|^2
 &=2\int_{|y|>R}|\nabla_yv|^2,\\
 \int|\partial_sv_R|^2
 &=\int_{|y|>R}(1+R^4/|y|^4)|\partial_sv|^2
 \le2\int_{|y|>R}|\partial_sv|^2,\\
 \int_{|y|>R}|v|^6
 &\le\int|v_R|^6
 \le
 \frac{27}{4G^2}\left(2\int_{|y|>R}|\nabla_yv|^2\right)^2
                      2\int_{|y|>R}|\partial_sv|^2
\\
 &\le\frac{54}{G^2}\left(\int|\nabla_yv|^2\right)^2
                      \int_{|y|>R}|\partial_sv|^2.
\end{align*}
For the general weight, set
\[
 B(t):=b(\sqrt{\max\{t,0\}}),\qquad B_+:=B(\cdot+),\qquad
 \nu:=dB_+|_{(0,\infty)}.
\]
Monotonicity and boundedness give a finite positive Stieltjes measure and
\begin{gather*}
 B_+(t)=b(0+)+\nu((0,t])\quad(t\ge0),\\
 b(|y|)=b(0+)+\int_{(0,\infty)}\mathbf1_{\{|y|^2\ge t\}}\,d\nu(t)
       \quad\text{a.e. in }(s,y).
\end{gather*}
Indeed $B=B_+$ except at countably many points, and every cylinder
$\{|y|^2=t\}$ is null. Applying Tonelli to the exterior-disk inequality
and to the nonnegative constant $b(0+)$ proves \eqref{weightedaniso} for
$C_c^\infty$ functions. For arbitrary real $v\in H^1$, choose
\begin{gather*}
 v_n\in C_c^\infty,\qquad v_n\to v\text{ in }H^1\cap L^6
 \quad\text{\cite{Hunter}},\\
 v_n^6\to v^6\text{ in }L^1,\qquad
 (\partial_jv_n)^2\to(\partial_jv)^2\text{ in }L^1.
\end{gather*}
Since $b\in L^\infty$, these $L^1$ limits pass \eqref{weightedaniso}
to the original $v$.
Translation gives the same estimate with $b(|y-y'|)$ for each $y'\in\R^2$.

\emph{2. A fixed-direction pair estimate.} First let $g\in C_c^\infty(\R^3;\R)$.
Fix $\omega\in S^2$ and an orthonormal frame $(\omega,e_1,e_2)$. Define
\begin{gather*}
 Q_\omega(s,y):=s\omega+y_1e_1+y_2e_2,\qquad g_\omega:=g\circ Q_\omega,\\
 \partial_sg_\omega=(\omega\cdot\nabla g)\circ Q_\omega,\qquad
 \partial_{y_j}g_\omega=(e_j\cdot\nabla g)\circ Q_\omega\quad(j=1,2).
\end{gather*}
Until \eqref{planepair}, write $g$ for this pullback and put
\begin{gather*}
 d:=\partial_sg,\qquad\beta(y,y'):=b(|y-y'|)=\beta(y',y),\\
 I_b:=\int\beta(y,y')g(s,y')^2d(s,y)^2\,ds\,dy\,dy',\\
 N_b:=\int\beta(y,y')g(s,y')^2g(s,y)^6\,ds\,dy\,dy',\\
 Y:=\pi_y(\supp g)\cup\pi_y(\supp d),\qquad |Y|<\infty,\\
 y\notin Y\qquad g(s,y)=d(s,y)=0.
\end{gather*}
Choose $M>0$ with $g^2\le M$. For $y'\in Y$ and $\epsilon>0$, define
\begin{gather*}
 h(s):=g(s,y'),\quad H(s):=h(s)^2+\epsilon,\quad
 a(s):=H(s)^{1/4},\quad\tau(s):=\int_0^sH(t)^{-1/2}\,dt,\\
 \tau'=a^{-2}\ge(M+\epsilon)^{-1/2}>0,\qquad
 \tau(t)-\tau(s)\ge(M+\epsilon)^{-1/2}(t-s)\quad(s\le t).
\end{gather*}
Hence $\tau:\R\to\R$ is a smooth increasing diffeomorphism. Define
$v(\tau(s),y):=a(s)g(s,y)$. Then $v\in C_c^\infty$ and
\begin{gather*}
 a'=hh'/(2a^3),\qquad
 \nabla_yv(\tau(s),y)=a\nabla_yg(s,y),\\
 v_\tau(\tau(s),y)=a^2(ag_s+a'g),\qquad d\tau=a^{-2}ds,\\
 \int|\nabla_yv|^2d\tau dy=\int|\nabla_yg|^2=:A_\perp,\\
 \int\beta v^6d\tau dy=\int\beta Hg^6dsdy.
\end{gather*}
For real $q,q_s,p,p_s$, put
\begin{align*}
 R_\epsilon(q,q_s,p,p_s)
 &:=(q^2+\epsilon)p_s^2+qq_spp_s
       +\frac{q^2q_s^2p^2}{4(q^2+\epsilon)}\\
 &=\left(\sqrt{q^2+\epsilon}\,p_s+
          \frac{qq_sp}{2\sqrt{q^2+\epsilon}}\right)^2\ge0,\\
 \frac{q^2}{q^2+\epsilon}\le1,\quad
 2qq_spp_s\le q^2p_s^2+p^2q_s^2
 &\qquad
 R_\epsilon\le\tfrac32q^2p_s^2+\tfrac34p^2q_s^2+\epsilon p_s^2.
\end{align*}
The exact transformed energy and its symmetric bound are
\begin{align*}
 \int\beta|v_\tau|^2d\tau dy
 &=\int\beta R_\epsilon(h,h',g,g_s)dsdy,\\
 \int_{\R\times Y\times Y}\beta R_\epsilon(h,h',g,g_s)
 &\le\left(\tfrac32+\tfrac34\right)I_b+\epsilon C_Y
 =\tfrac94I_b+\epsilon C_Y,\\
 C_Y&:=\int_{\R\times Y\times Y}\beta|g_s(s,y)|^2<\infty.
\end{align*}
The second line uses $(y,y')\leftrightarrow(y',y)$. Boundedness of
$\beta,g$, finite $|Y|$, and $g^6,g_s^2\in L^1$ justify Fubini and every
integral above. Apply \eqref{weightedaniso} at each fixed center $y'$:
\begin{align*}
 N_b&\le\int_Y\int\beta Hg^6\,dsdy\,dy'\\
 &\le\frac{54}{G^2}A_\perp^2
       \int_{\R\times Y\times Y}\beta R_\epsilon(h,h',g,g_s)\\
 &\le\frac{54}{G^2}A_\perp^2\left(\tfrac94I_b+\epsilon C_Y\right).
\end{align*}
Letting $\epsilon\downarrow0$ gives
\begin{equation}\label{planepair}
 N_b\le\frac{243}{2G^2}A_\perp^2I_b.
\end{equation}
The $I_b,N_b$ integrands vanish off $\R\times Y\times Y$; the error
$\epsilon C_Y$ was integrated only on that finite transverse set.

\emph{3. Incidence measure and its normalization.} Return to physical $g$.
Orthonormality gives
\[
 A_{\perp,\omega}
 =\int_{\R^3}\bigl(|\nabla g|^2-|\omega\cdot\nabla g|^2\bigr)\le A_g.
\]
For $z\ne0$, choose an orthonormal frame $(\widehat z,a_z,c_z)$ and set
\begin{gather*}
 C_z:=\{\omega\in S^2:\omega\cdot z=0\},\qquad
 \gamma_z(\theta):=a_z\cos\theta+c_z\sin\theta,\\
 \nu_z:=|z|^{-1}(\gamma_z)_\#(\mathbf1_{(0,2\pi]}d\theta),\qquad\nu_0:=0.
\end{gather*}
Here $\gamma_z:(0,2\pi]\to C_z$ is bijective and $|\gamma'_z|=1$.
Frame changes are phase shifts or reversals of $\theta$, so $\nu_z$ is
$|z|^{-1}$ times intrinsic arclength, independently of the frame.
For measurability, fix a unit $e_0$ and a frame at $e_0$, and use
\[
 H_nx:=x-2\frac{(e_0-n)\cdot x}{|e_0-n|^2}(e_0-n),\qquad
 H_{e_0}:=I,\qquad n:=\widehat z.
\]
This Borel Householder frame makes $z\mapsto\nu_z$ a Borel measure kernel.
For $z\ne0$ and $\xi,\eta\in\R^3$, direct circle integration gives
\begin{equation}\label{great-circle-moments}
 \begin{aligned}
 \int d\nu_z&=2\pi/|z|,\\
 \int(\omega\cdot\xi)(\omega\cdot\eta)\,d\nu_z
 &=\frac\pi{|z|}\bigl[\xi\cdot\eta
       -(\widehat z\cdot\xi)(\widehat z\cdot\eta)\bigr],\\
 \int|\omega\cdot\xi|^2d\nu_z&=\pi|P_z\xi|^2/|z|.
 \end{aligned}
\end{equation}
These are measure integrals. Let $d\sigma$ be unnormalized area on $S^2$.
For every nonnegative measurable $F$,
\begin{equation}\label{incidence-coarea}
 \begin{aligned}
 &\int_{S^2}\int_\R\int_{\omega^\perp}\int_{\omega^\perp}
 F(\omega,s\omega+y,s\omega+y')\,dy'\,dy\,ds\,d\sigma(\omega)\\
 &\qquad=\iint_{\R^3\times\R^3}
 \int_{S^2}F(\omega,x,x')\,d\nu_{x-x'}(\omega)\,dx\,dx'.
 \end{aligned}
\end{equation}
The successive changes of variables are
\begin{gather*}
 x=s\omega+y,\quad z=y-y'\in\omega^\perp,\quad x'=x-z,
 \qquad dz|_{\omega^\perp}=r\,dr\,d\ell_{C_\omega}(u),\quad z=ru,\\
 d\sigma(\omega)d\ell_{C_\omega}(u)
 \longleftrightarrow d\sigma(u)d\ell_{C_u}(\omega)
 \quad\text{\cite{SchneiderWeil}},\\
 r\,dr\,d\sigma(u)=|z|^{-1}dz.
\end{gather*}
Tonelli and the invariant flag formula give \eqref{incidence-coarea}.
The null diagonal makes the convention $\nu_0=0$ immaterial.

\emph{4. Averaging the fixed-direction estimate.} Use
\begin{align*}
 F_N(\omega,x,x')&:=b(|x-x'|)g(x')^2g(x)^6,\\
 F_X(\omega,x,x')&:=b(|x-x'|)g(x')^2|\omega\cdot\nabla g(x)|^2.
\end{align*}
For smooth compactly supported $g$, bounded $b$ and
$|z|^{-1}\in L^1_{\rm loc}(\R^3)$ imply absolute integrability after the
shear $(x,x')\mapsto(x-x',x')$; also $|P_z\nabla g|\le|\nabla g|$.
Thus \eqref{incidence-coarea}, \eqref{great-circle-moments}, and $b(r)/r=w(r)$
give
\begin{align*}
 \int_{S^2}N_{b,\omega}\,d\sigma&=2\pi N_w(g),\\
 \int_{S^2}I_{b,\omega}\,d\sigma&=\pi X_w(g),\\
 2\pi N_w(g)
 &\le
 \frac{243}{2G^2}\int_{S^2}A_{\perp,\omega}^2I_{b,\omega}\,d\sigma
\\
 &\le\frac{243}{2G^2}A_g^2\pi X_w(g).
\end{align*}
Dividing by $2\pi$ gives \eqref{angularbound}.

\emph{5. Passage to actual weak $H^1$ data.} Choose one real sequence
$g_n\in C_c^\infty$ with $g_n\to g$ in $H^1\cap L^6$
\cite{Hunter}. Set $B_b:=\|b\|_\infty$.
For either scalar or matrix-entry kernel
\begin{align*}
 K(x,y)&=w(|x-y|),\quad\text{or}\\
 K(x,y)&=w(|x-y|)(P_{x-y})_{ij},\\
 |K(x,y)|&\le B_b/|x-y|\qquad(x\ne y).
\end{align*}
translated Hardy \cite{DietzeNam} yields
\begin{align*}
 \sup_x\left|\int K(x,y)(g_n(y)^2-g(y)^2)\,dy\right|
 &\le2B_b\|g_n-g\|_2
        (\|\nabla g_n\|_2+\|\nabla g\|_2)\longrightarrow0,\\
 g_n^6&\longrightarrow g^6\quad\text{in }L^1,\\
 (\partial_i g_n)(\partial_j g_n)
 &\longrightarrow(\partial_i g)(\partial_j g)\quad\text{in }L^1.
\end{align*}
The same Hardy bound controls the potentials uniformly. Therefore
\begin{gather*}
 N_w(g_n)\longrightarrow N_w(g),\qquad
 X_w(g_n)\longrightarrow X_w(g),\qquad A_{g_n}\longrightarrow A_g.
\end{gather*}
Passing to the limit in \eqref{angularbound} for $g_n$ proves the same
inequality for $g$. Both limiting pairings are absolutely integrable. Finally,
\[
 A_g\le\sqrt2G/9
 \qquad\tfrac{243}{4}A_g^2/G^2\le\tfrac32
 \qquad N_w\le\tfrac32X_w
 \qquad4X_w-\tfrac83N_w\ge0.
\]
\end{proof}
This finishes the angular estimate for every admissible radial weight. We next specialize the geometry to the Gaussian family used in the interaction functional.

\subsection{Gaussian interaction geometry}
Define the single-scale family
\begin{equation}\label{gaussiankernel}
 \Phi(s)=e^{-s^2/2},\quad
 K_0(s)=s\int_0^s\Phi(t)dt-1+\Phi(s),\qquad
 k_R(x)=R K_0(|x|/R).
\end{equation}
This family is defined at a single scale and contains no radius averaging. Set
\[
 \Phi_R(r)=R^{-1}\Phi(r/R),\quad
 \Xi_R(r)=r^{-1}\int_0^r\Phi_R(t)dt\quad(r>0),
 \quad w_R=\Xi_R-\Phi_R,
\]
The continuous values at the origin are $\Xi_R(0):=R^{-1}$ and $w_R(0):=0$.
\begin{equation*}
 H_R(z)=D^2k_R(z)=\Phi_R I+w_RP_z,\qquad
 L_R=\Delta k_R=\Phi_R+2\Xi_R.
\end{equation*}
One has $K_0''=\Phi$, $K_0-sK_0'=\Phi-1$, and $rw_R(r)=\int_0^r-t\Phi_R'(t)dt$ is bounded and nondecreasing. Thus Lemma~\ref{angularlemma} applies uniformly in $R$.

For a radial $\Phi$ with Fourier transform $\Gamma$, radial averaging gives, for $r>0$,
\[
 \widehat{\Xi_1}(r)=\int_0^1t^{-3}\Gamma(r/t)dt=r^{-2}\int_r^\infty s\Gamma(s)ds.
\]
For the Gaussian, $\Gamma(\xi)=(2\pi)^{3/2}e^{-|\xi|^2/2}$, so
\begin{equation}\label{densityFourier}
 \Q_R(\rho):=-\int\rho\Delta(L_R*\rho)
 =(2\pi)^{-3}\int(R^2|\xi|^2+2)\Gamma(R\xi)|\widehat\rho(\xi)|^2d\xi\ge0.
\end{equation}
Also $Q_{\Phi,R}(\rho):=-\int\rho\Delta(\Phi_R*\rho)\ge0$ and $Q_{\Phi,R}\le\Q_R$. All these are statements about complete Fourier quadratic forms. The physical density kernel $R^{-3}(5-|x|^2/R^2)\Phi(|x|/R)$ changes sign and is never discarded pointwise.
The second spatial ingredient controls the full convolution quadratic form without discarding this sign-changing kernel.

\begin{lemma}[Weighted convolution Sobolev]\label{isotropic}
Let $v\in C^2(\R^3)$ be even and nonnegative, with $v$ and all its first
and second derivatives bounded. Let $g\in H^1(\R^3;\R)$ be nonnegative,
$\rho=g^2$, $U=v*\rho$, and put
\[
 X_v=\int U|\nabla g|^2,\quad N_v=\int Ug^6,\quad Q_v=-\int\rho\Delta U.
\]
These quantities are finite, $\Delta U=(\Delta v)*\rho$, and
\begin{equation*}
 N_v\le\frac{A_g^2}{G^2}\left(2X_v+\frac12Q_v\right).
\end{equation*}
\end{lemma}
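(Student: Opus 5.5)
The plan is to reduce the weighted estimate to the unweighted sharp Sobolev inequality \eqref{sharp}. We apply \eqref{sharp} to the auxiliary function $\sqrt{U_\epsilon}\,g$, where $U_\epsilon:=U+\epsilon$ with $\epsilon>0$. The regularization keeps the square root smooth where $U$ vanishes; we let $\epsilon\downarrow0$ at the end.

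\emph{Preliminaries.} Since $g\in H^1$, we have $\rho=g^2\in L^1$ and $\nabla\rho=2g\nabla g\in L^1$. Because $v$ and its first and second derivatives are bounded, $U=v*\rho$ is $C^2$ and bounded, with
\[
 \nabla U=(\nabla v)*\rho=v*\nabla\rho,\qquad \Delta U=(\Delta v)*\rho,
\]
and all three are bounded. This gives finiteness: $|X_v|\le\|v\|_\infty\|\rho\|_1A_g$, $|N_v|\le\|v\|_\infty\|\rho\|_1\|g\|_6^6$ and $|Q_v|\le\|\Delta v\|_\infty\|\rho\|_1^2$. It also shows $f_\epsilon:=U_\epsilon^{1/2}g\in H^1$. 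Indeed $U_\epsilon^{1/2}$ is $C^1$, bounded, and bounded below by $\sqrt\epsilon$, and its gradient $\nabla U/(2U_\epsilon^{1/2})$ is bounded.

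\emph{Hölder splitting.} We write
\[
 \int U_\epsilon g^6=\int f_\epsilon^2g^4\le\|f_\epsilon\|_6^2\|g\|_6^4.
\]
Applying \eqref{sharp} to each factor gives $\|f_\epsilon\|_6^2\le G^{-2/3}A(f_\epsilon)$ and $\|g\|_6^4\le G^{-4/3}A_g^2$. Hence
\[
 N_v\le\int U_\epsilon g^6\le\frac{A_g^2}{G^2}\,A(f_\epsilon),
\]
and it remains to show $A(f_\epsilon)\le 2X_v+\tfrac12Q_v+O(\epsilon)$.

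\emph{Expanding the gradient.} Since $\nabla f_\epsilon=U_\epsilon^{1/2}\nabla g+g\nabla U/(2U_\epsilon^{1/2})$, we get
\[
 A(f_\epsilon)=\int U_\epsilon|\nabla g|^2+\int g\nabla g\cdot\nabla U+\int\frac{\rho|\nabla U|^2}{4U_\epsilon}.
\]
We treat the three terms in turn.
\begin{itemize}
\item[(i)] The first term equals $X_v+\epsilon A_g$.
\item[(ii)] The middle term is $\tfrac12\int\nabla\rho\cdot\nabla U$. Integrating by parts, which is legitimate since $\rho\in W^{1,1}$ and $U\in C^2$ with bounded derivatives, turns it into $-\tfrac12\int\rho\Delta U=\tfrac12Q_v$.
\item[(iii)] The third term is the Fisher-type contraction, which we expect to be the only delicate step. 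Write $\nabla U(x)=\int v(x-y)\,2g(y)\nabla g(y)\,dy$ and apply Cauchy--Schwarz with respect to the nonnegative measure $v(x-y)\,dy$:
\[
 |\nabla U|^2\le4\,(v*\rho)\,(v*|\nabla g|^2)\le4\,U_\epsilon\,(v*|\nabla g|^2).
\]
Therefore the third term is at most $\int\rho\,(v*|\nabla g|^2)$. By Tonelli and the evenness of $v$, this equals $\int(v*\rho)|\nabla g|^2=X_v$.
\end{itemize}

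Summing the three contributions gives $A(f_\epsilon)\le2X_v+\tfrac12Q_v+\epsilon A_g$. Inserting this into the Hölder bound and letting $\epsilon\downarrow0$ yields the stated inequality. The only points needing care are the integration by parts in (ii) and the use of evenness of $v$ when exchanging the convolution in (iii). Both are justified by the integrability of $\rho$, $\nabla\rho$ and $|\nabla g|^2$ together with the boundedness of $v$ and its derivatives. No approximation of $g$ is needed.
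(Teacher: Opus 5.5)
Your proof is correct and follows the paper's argument. It uses the same regularized auxiliary function $U_\epsilon^{1/2}g$, the same H\"older split with two applications of \eqref{sharp}, the same integration by parts producing $\tfrac12 Q_v$, and the same Cauchy--Schwarz-plus-evenness bound $\int\rho|\nabla U|^2/U_\epsilon\le 4X_v$. The only difference is that the paper first treats $g\in C_c^\infty$ and then passes to weak $H^1$ data by approximation, whereas you work directly at $H^1$ regularity via $g^2\in W^{1,1}$ and the weak product rule for $U_\epsilon^{1/2}g$; that is legitimate here.
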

\begin{proof}
\emph{Smooth data and a positive regularization.} First let
$g\in C_c^\infty(\R^3;\R)$, $\rho:=g^2$, $U:=v*\rho$, and
$U_\epsilon:=U+\epsilon$, $\epsilon>0$. Bounded kernel derivatives give
\[
 \nabla U=v*\nabla\rho=2v*(g\nabla g),\qquad
 \Delta U=(\Delta v)*\rho,\qquad U_\epsilon>0.
\]
Pointwise Cauchy--Schwarz and the evenness of $v$ imply
\begin{align*}
 |\nabla U(x)|^2
 &\le4U(x)\int v(x-y)|\nabla g(y)|^2\,dy,\\
 \mathcal F_\epsilon:=\int\rho\frac{|\nabla U|^2}{U_\epsilon}
 &\le4\iint\rho(x)v(x-y)|\nabla g(y)|^2\,dxdy=4X_v.
\end{align*}
Every denominator is positive. H\"older and two uses of \eqref{sharp}
\cite{Talenti} give
\begin{align*}
 N_v+\epsilon\int g^6
 &=\int U_\epsilon g^6\\
 &\le\left(\int g^6\right)^{2/3}
        \left(\int U_\epsilon^3g^6\right)^{1/3}\\
 &\le\frac{A_g^2}{G^2}
        \int|\nabla(U_\epsilon^{1/2}g)|^2.
\end{align*}
Expansion and integration by parts, justified by compact support, yield
\begin{align*}
 \int|\nabla(U_\epsilon^{1/2}g)|^2
 &=X_v+\epsilon A_g
       +\tfrac12\int\nabla\rho\cdot\nabla U+\tfrac14\mathcal F_\epsilon\\
 &=X_v+\epsilon A_g+\tfrac12Q_v+\tfrac14\mathcal F_\epsilon\\
 &\le2X_v+\epsilon A_g+\tfrac12Q_v.
\end{align*}
Letting $\epsilon\downarrow0$ proves the required inequality for smooth,
possibly signed $g$, without any sign condition on $Q_v$.

\emph{Weak $H^1$ data.} Choose one real $g_n\in C_c^\infty$ with
$g_n\to g$ in $H^1\cap L^6$
\cite{Hunter}, and set
$U_n:=v*g_n^2$. Then
\begin{gather*}
 g_n^2\to g^2,\qquad g_n^6\to g^6,\qquad
 |\nabla g_n|^2\to|\nabla g|^2\quad\text{in }L^1,\\
 \|U_n-U\|_\infty\le\|v\|_\infty\|g_n^2-g^2\|_1\longrightarrow0,\\
 \|\Delta U_n-\Delta U\|_\infty
 \le\|\Delta v\|_\infty\|g_n^2-g^2\|_1\longrightarrow0.
\end{gather*}
Boundedness of $U_n,\Delta U_n$ and the unweighted $L^1$ limits imply
\[
 X_v(g_n)\to X_v(g),\qquad N_v(g_n)\to N_v(g),\qquad
 Q_v(g_n)\to Q_v(g),\qquad A_{g_n}\to A_g.
\]
All limiting integrals are finite; passing to the limit proves the claim.
\end{proof}
Together with the angular lemma, this estimate gives the Gaussian lower bound needed for uniform coercivity.
For $v=\Phi_R$, combine Lemmas~\ref{angularlemma} and \ref{isotropic}. If $P_R(g;g^2)$ denotes the kinetic and focusing parts of $\mathfrak B_{k_R}[g]$, and $a_g=A_g^2/G^2$, then
\begin{equation}\label{tailsmall}
 P_R(g;g^2)\ge(4-8a_g)X_{\Phi_R}(g)+(4-162a_g)X_{w_R}(g)
                   -2a_gQ_{\Phi,R}(g^2).
\end{equation}
In particular, for $A_g\le G/9$ both kinetic coefficients are nonnegative. The absolute threshold $G/9$ is smaller than the classical $4G/9$ threshold, but the tail below can be chosen below either fixed positive number.

\subsection{Uniform coercivity near the compact orbit}
For nonnegative $c\in H^1$, define
\[
 T_c:=\int\nabla c\otimes\nabla c,\qquad A_c:=\tr T_c,\qquad
 B_c:=\int c^6.
\]
If $A_c<G$ and $E(c)<G/3$, then
\begin{equation}\label{tensor}
 T_c-\frac{B_c}{3}I\succeq
 \frac{2(G/3-E(c))}{G-A_c}\,T_c.
\end{equation}
Indeed, for a unit vector $\omega$, put
$D:=\|\partial_\omega c\|_2^2$ and $T:=A_c-D$. Linear pullback
\cite{Hunter} and
\eqref{sharp} \cite{Talenti} give
\begin{align*}
 B_cG^2q&\le(T+qD)^3\qquad(q>0).
\end{align*}
If $D>0$, choose
\[
 q:=\frac{G-T}{D}=1+\frac{G-A_c}{D}>0.
\]
Then
\begin{align*}
 B_c(G-T)&\le GD,\\
 GD-B_c(G-T)&\ge0,\\
 D-\frac{B_c}{3}
 &\ge\frac{2(G/3-E(c))}{G-A_c}D.
\end{align*}
If $D=0$, then $B_c=0$.
The last two lines cover every direction and prove \eqref{tensor}.
For weak $H^1$ data, one smooth sequence converges in $H^1\cap L^6$
\cite{Hunter}; each directional
quadratic moment is a finite linear combination of $L^1$ gradient products.
Thus the same inequalities pass to the limit in every frame.

The Gaussian kernel satisfies
\begin{align}
 &\|H_R\|_\infty+\|L_R\|_\infty\le C/R,
 \quad H_R(z)=R^{-1}I+O(|z|^2/R^3),\label{kernelNear}\\
 &|\nabla H_R(z)|+|\nabla L_R(z)|\le C L_R(z)/|z|,
 \qquad\|\Delta\Phi_R\|_\infty\le C/R^3.\label{kernelRelative}
\end{align}
These follow by scaling and $s\Xi'(s)=\Phi(s)-\Xi(s)$. For $s\le1$, use $\Xi\ge\Phi$; for $s\ge1$, use $\Xi(s)\ge s^{-1}\int_0^1\Phi$ to bound $s^2\Phi/\Xi$. Thus the relative derivative ratios are bounded at all scales.
The preceding Gaussian identities can now be made uniform on a homogeneous neighborhood of the compact orbit.

\begin{proposition}[Gaussian core gap]\label{spatial}
Let $\K\subset\dot H^1$ be compact, not containing zero, with strict uniform energy and gradient thresholds. There are $\epsilon_0,R_0,c>0$ such that every $h\in H^1$ within $\epsilon_0$ of a translate of $\K$ in $\dot H^1$ satisfies, with
\[
 \mathfrak C_R(h):=R\mathfrak B_{k_R}[h]-\frac{77}{81}R\Q_R(|h|^2),
\]
\begin{equation}\label{spatialbound}
 \inf_{R\ge R_0}\mathfrak C_R(h)\ge c.
\end{equation}
No bound on $\|h\|_2$ is used in these constants.
\end{proposition}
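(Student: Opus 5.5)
Write $g:=|h|\in H^1$. The plan is to first apply Lemma~\ref{phase}, since $D^2k_R=H_R=\Phi_RI+w_RP_z\succeq0$. This reduces \eqref{spatialbound} to the same bound for $g$, with $\Q_R(g^2)$ unchanged. We then split $R\mathfrak B_{k_R}[g]$ into its density part $RQ_{k_R}(g^2)=R\Q_R(g^2)$, using $\Delta^2k_R=\Delta L_R$ and \eqref{densityFourier}, and its stress part $4R\iint H_R(x-y):\mathbf S_g(x,y)$. For real $g$ the current vanishes, so
\[
 \mathbf S_g=g(y)^2\bigl(\nabla g\otimes\nabla g-\tfrac13g^6I\bigr)(x).
\]
After the subtraction, a fraction $4/81$ of $R\Q_R$ remains available to absorb errors. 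The target is a lower bound of the form
\[
 c\;\le\;4R\iint g(y)^2\,H_R(x-y):\bigl(\nabla g\otimes\nabla g-\tfrac13 g^6I\bigr)(x)\,dxdy+\tfrac{4}{81}R\Q_R(g^2).
\]

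\emph{Core--tail decomposition.} By compactness of $\K$, uniformly over translates and over the $\epsilon_0$-neighborhood, we fix a small $\eta>0$ and a radius $\rho_0$ such that, after translating the relevant profile to the origin, $g=c_1+c_2$. The core $c_1=\chi_{\rho_0}g$ is smooth-cut near the origin. The tail $c_2$ satisfies $\|\nabla c_2\|_2^2\le\eta$, and hence $A_{c_2}\le G/9$ once $\eta$ is small. The form is not diagonal, so we expand the two-point stress bilinearly into core--core, tail--tail and cross terms.

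\emph{Core--core term.} Use \eqref{kernelNear}: on $B_{\rho_0}\times B_{\rho_0}$ one has $RH_R(x-y)=I+O(\rho_0^2/R^2)$. The core--core stress therefore equals
\[
 \|c_1\|_2^2\bigl(T_{c_1}-\tfrac{B_{c_1}}3I\bigr):I+O(\rho_0^2R^{-2})\,\|c_1\|_2^2 A_{c_1}.
\]
Here $\|c_1\|_2\lesssim\rho_0$ by local Sobolev, which is a bound independent of $\|h\|_2$. The tensor gap \eqref{tensor} holds for $c_1$, because the energy and gradient gaps persist under the cutoff up to $O(\eta)$. Its trace gives a lower bound $\gtrsim\|c_1\|_2^2A_{c_1}$. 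This is bounded below by $c>0$, since $0\notin\K$ gives $\|v\|_{L^4(B_{R_0})}\ge c_0$ and hence nontrivial $c_1$. Choosing $R_0\gg\rho_0$ makes the $O(\rho_0^2R^{-2})$ correction negligible.

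\emph{Tail--tail term.} Apply \eqref{tailsmall} with $a_{c_2}\le\eta^2/G^2$. This gives nonnegative kinetic terms minus $2a_{c_2}Q_{\Phi,R}(c_2^2)$. That loss is bounded by $2a_{c_2}\Q_R$, which is at most $\frac4{81}$ of the density form once $\eta$ is small. It is here that the density fraction $77/81$ enters.

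\emph{Cross terms and the density form.} We expect these to be the main obstacle. They are of three kinds:
\begin{itemize}
\item $g=c_1$ in the $y$-slot and $c_2$ in the $x$-slot, and the reverse;
\item mixed pieces of $|\nabla g|^2$ and $g^6$, which are supported only where $\nabla\chi\ne0$;
\item the bilinear pieces of $\Q_R(g^2)$ involving $c_1c_2$.
\end{itemize}
The positive kinetic cross terms (for instance $c_1(y)^2|\nabla c_2(x)|^2$) are kept. The negative ones are treated in two ways:
\begin{itemize}
\item the sextic cross terms $c_1(y)^2c_2(x)^6$ are controlled by applying Lemma~\ref{isotropic} and Lemma~\ref{angularlemma} to the full $g$, with weight $U=\Phi_R*c_1^2$, rather than to $c_2$ alone, paying with $\|\nabla c_2\|_2^4$ smallness;
\item the terms with $c_2(y)^2c_1(x)^6$ are bounded using $\|RH_R\|_\infty\le C$ and the Hardy bound \eqref{Hardy}. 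Since $RH_R\lesssim R\,L_R\lesssim \min(1,R/|z|)$, this gives $\sup_x\int c_2^2/|x-y|\le2\|c_2\|_2\|\nabla c_2\|_2$.
\end{itemize}

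A difficulty remains: $\|c_2\|_2$ may be large, because the total mass is unbounded. To avoid it, we use the weight $\Phi_R$ itself and write the cross pairing as $\int c_1^6\,(\Phi_R*c_2^2)$. We then compare this with the retained positive terms $X_{\Phi_R}$, $X_{w_R}$ and with the full-form density $\Q_R$, via \eqref{kernelRelative} and Cauchy--Schwarz in the Fourier form \eqref{densityFourier}, never bounding the sign-changing kernel pointwise. If this fails, the fallback is to redefine $c_1$ as the core at the normalized scale and absorb $\int c_1^6(\Phi_R*c_2^2)$ into $\eta$ times the kinetic tail term $X_{\Phi_R}(c_2)$, using Lemma~\ref{isotropic}. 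Collecting the estimates and choosing the parameters in the order $\eta$, $\rho_0$, $\epsilon_0$, $R_0$ gives $\mathfrak C_R(h)\ge c$ for all $R\ge R_0$.
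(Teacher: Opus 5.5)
Your reduction to $f=|h|$ by Lemma~\ref{phase}, your near-field treatment of the core via \eqref{kernelNear}, and your use of \eqref{tailsmall} for the tail agree with the paper. The argument breaks down at the step you flag yourself: the tail density $c_2(y)^2$ paired with the core stress at $x$. Its negative part is $\tfrac43\iint c_2(y)^2L_R(x-y)c_1(x)^6\,dxdy$; the sextic weight is $\tfrac13\tr H_R=\tfrac13L_R$, not $\Phi_R$. This is comparable to $\int c_1^6\cdot\int c_2^2L_R$. Since $L_R(r)\sim r^{-1}$ for $r\gg R$, it involves $\int_{|y|>R}c_2^2/|y|$, which is not controlled by $\|\nabla c_2\|_2$ at all, let alone after multiplication by $R$.

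Neither of your remedies can absorb this term. $X_{\Phi_R}(c_2)$, $X_{w_R}(c_2)$ and $\Q_R$ carry tail gradients or the density form, whereas this term carries the order-one core sextic mass. Lemma~\ref{isotropic} compares $N_v$ with $X_v$ for the \emph{same} function, so it cannot trade $c_1^6$ for $|\nabla c_2|^2$. The transition-region pieces $\nabla c_1\otimes\nabla c_2$ and $c_1^kc_2^{6-k}$ of your linear splitting have the same problem, since they are also paired with the full density of unbounded mass. The only term that can compensate is the core \emph{kinetic} stress against the same tail density, and that compensation cannot be made pointwise in $x$.

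The missing idea is global nonnegativity of the convolved core stress. The paper uses the trigonometric splitting $c=\cos\theta\,f$, $g=\sin\theta\,f$. For it, $\nabla f\otimes\nabla f=\nabla c\otimes\nabla c+\nabla g\otimes\nabla g-f^2\nabla\theta\otimes\nabla\theta$ and $f^6=c^6+g^6+3\cos^2\theta\sin^2\theta\,f^6$ hold exactly. Everything supported in $\overline B_{2C}$ is collected into one tensor $S_c$.

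The paper then proves the \emph{directional} bound $\int S_c\succeq\sigma I$ from \eqref{tensor} and \eqref{aniso}. The trace bound you use is not enough: for $|y|\gg R$ the matrix $H_R(-y)$ has eigenvalue $\Phi_R$ along $\widehat y$ and $\Xi_R$ transversally.

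Finally it shows $F_R(y):=\int H_R(x-y):S_c(x)\,dx\ge0$ for \emph{every} $y$. For $|y|\ge DC$, \eqref{kernelRelative} and segment comparison give $F_R\ge\frac12\sigma L_R$. For $|y|\le DC$, \eqref{kernelNear} gives $F_R\ge\sigma/R$ once $R\ge R_0$. Consequently $\int\rho F_R\ge\int c^2F_R\ge\sigma M_c/R$, and the tail mass is discarded by positivity rather than estimated.

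What remains is $P_R(g;g^2)$, handled by \eqref{tailsmall} together with $Q_{\Phi,R}(g^2)\le2\Q_R(\rho)+2Q_{\Phi,R}(c^2)$, and $P_R(g;c^2)\ge-4\eta^6M_c/R$. Both involve only the core mass $M_c\le16GC^2$. Without this global nonnegativity your plan has no mechanism that avoids $\|h\|_2$.
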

\begin{proof}
\emph{1. Constants before cutoffs.} Center $h$ and set $f:=|h|$.
Polar factorization \cite{ZhengPolar} gives
$A_f\le A_h$ and $E(f)\le E(h)$. Compactness, the strict gaps and
$0\notin\K$ give fixed $a,e,b>0$ on a sufficiently small homogeneous
neighborhood:
\[
 A_h\le G-a,\qquad E(h)\le G/3-e,\qquad \int f^6\ge b.
\]
Fix also an annulus $0<r_-<r_+$ carrying positive weighted mass, as
justified below, and a smooth cutoff with derivative constant $C_0$.
Put $\sigma:=4eb/(27G)>0$. Choose $\eta>0$ jointly so that
\begin{equation}\label{core-smallness-choice}
 \begin{gathered}
 4C_0^2\eta^2\le a/2,\qquad
 2C_0^2\eta^2+\eta^6/6\le e/2,\qquad \eta^6\le b/2,\\
 (6+8C_0^2)\eta^2\le G/9,\qquad
 16C_0^2\eta^2+\tfrac43\eta^6\le\sigma,\qquad
 4\eta^6\le\sigma/4.
 \end{gathered}
\end{equation}
Such a choice exists because every left-hand side tends to zero.
The bounded maps
\[
 \dot H^1\longrightarrow L^6,\quad v\longmapsto v;\qquad
 \dot H^1\longrightarrow L^2,\quad v\longmapsto v/|x|
\]
are Sobolev \cite{Tao} and Hardy
\cite{DietzeNam}. Their compact images have
uniform tails. Shrink the same neighborhood and choose $C\ge r_+$ so that
\begin{equation}\label{core-three-tails}
 \int_{|x|>C/2}|\nabla f|^2\le3\eta^2,\qquad
 \int_{|x|>C/2}\frac{f^2}{|x|^2}\le\eta^2,\qquad
 \int_{|x|>C/2}f^6\le\eta^6.
\end{equation}

\emph{2. Exact weak cutoff identities.} Choose
\begin{gather*}
 \theta=0\text{ on }B_C,\quad \theta=\pi/2\text{ outside }B_{2C},\quad
 0\le\theta\le\pi/2,\quad |\nabla\theta|\le C_0/C,\\
 \alpha:=\cos\theta,\quad\beta:=\sin\theta,\quad
 c:=\alpha f,\quad g:=\beta f,\quad\rho:=f^2=c^2+g^2.
\end{gather*}
The weak product rule \cite{Hunter} gives
\begin{align*}
 \nabla f\otimes\nabla f
 &=\nabla c\otimes\nabla c+\nabla g\otimes\nabla g
       -f^2\nabla\theta\otimes\nabla\theta,\\
 e_6:=f^6-c^6-g^6&=3\alpha^2\beta^2f^6\ge0.
\end{align*}
Consequently \eqref{core-three-tails} and \eqref{core-smallness-choice} yield
\begin{align*}
 \int f^2|\nabla\theta|^2&\le4C_0^2\eta^2,\qquad
 \int e_6\le\eta^6,\qquad\int g^6\le\eta^6,\\
 A_g&\le(6+8C_0^2)\eta^2\le G/9,\\
 A_c&\le A_f+4C_0^2\eta^2\le G-a/2,\\
 E(c)&\le E(f)+2C_0^2\eta^2+\eta^6/6\le G/3-e/2,\\
 B_c&\ge\int f^6-\eta^6\ge b/2,\qquad
 B_c\le G.
\end{align*}
For every unit $\omega$, \eqref{aniso} and $A_c\le G$ give the
first bound below. Applying \eqref{tensor} with $G/3-E(c)\ge e/2$
then gives the second:
\[
 \|\partial_\omega c\|_2^2\ge4B_c/27,\qquad
 4T_c-\tfrac43B_cI\succeq2\sigma I.
\]
Define the core stress and its nonnegative rank-one majorant by
\begin{align*}
 S_c&:=4\nabla c\otimes\nabla c-\tfrac43c^6I
       -4f^2\nabla\theta\otimes\nabla\theta-\tfrac43e_6I,\\
 \mathfrak s_c&:=4|\nabla c|^2+4f^2|\nabla\theta|^2+4(c^6+e_6),\\
 M_0&:=8G+16C_0^2\eta^2+4\eta^6.
\end{align*}
For symmetric matrices $A$, $|A:S_c|\le\|A\|_{\rm op}\mathfrak s_c$.
The cutoff error in every unit direction is at most
$16C_0^2\eta^2+\tfrac43\eta^6\le\sigma$. Hence
\[
 \int S_c\succeq\sigma I,\qquad
 \int\mathfrak s_c\le M_0,\qquad
 \operatorname{ess\,supp}S_c\subset\overline B_{2C}.
\]
The constants $M_0,\sigma$ do not depend on the chosen radius $C$;
the fields $S_c,\mathfrak s_c$ use that radius.

\emph{3. A positive contribution at every density point.} For $R>0$, define
$F_R(y):=\int H_R(x-y):S_c(x)\,dx$. The eigenvalues of $H_R(-y)$ are
$\Phi_R,\Xi_R,\Xi_R\ge0$, including their continuous values at zero. Thus
\[
 H_R(-y):\int S_c\ge\sigma L_R(y).
\]
The relative bounds \eqref{kernelRelative} and segment comparison imply,
for $|y|\ge4C$,
\[
 \left|F_R(y)-H_R(-y):\int S_c\right|
 \le\frac{144C}{|y|}L_R(y)M_0.
\]
Set $D:=\max\{4,288M_0/\sigma\}$. Then, for $|y|\ge DC$,
\[
 F_R(y)\ge\tfrac12\sigma L_R(y)\ge0.
\]
In the near region, the quadratic remainder in \eqref{kernelNear} gives
\[
 \left|F_R(y)-R^{-1}\tr\int S_c\right|
 \le\frac{(2C+|y|)^2M_0}{2R^3}.
\]
If $R\ge R_0\ge1+(D+2)^2C^2M_0/\sigma$ and $|y|\le DC$, then
\[
 F_R(y)\ge\sigma/R.
\]
The Hessian is continuous at zero. Dominated convergence and
$\|H_R\|_{\rm op}\le R^{-1}$ give
\[
 F_R\in C(\R^3),\qquad |F_R|\le M_0/R,\qquad
 \rho F_R\in L^1,\qquad F_R\ge0.
\]
Put $c_1:=\sigma$, $M_c:=\int c^2$, and $C_2:=16G$. Since $c^2\le\rho$ and
$\supp c\subset\overline B_{2C}$,
\[
 \int\rho F_R\ge c_1M_c/R,\qquad
 0<m_0\le M_c\le C_2C^2.
\]
For the lower bound, the compact image $\{v/|x|:v\in\K\}$ excludes zero;
there exist the already fixed $r_-,r_+$ and $m_{\rm w}>0$ such that
\[
 \inf_{v\in\K}\int_{r_-<|x|\le r_+}\frac{|v(x)|^2}{|x|^2}\,dx
 \ge m_{\rm w}>0.
\]
After shrinking the neighborhood,
\[
 M_c\ge\int_{r_-<|x|\le r_+}f^2
 \ge r_-^2m_{\rm w}/2=:m_0>0.
\]
The upper bound is local Hardy:
$M_c\le\int_{B_{2C}}|h|^2\le16C^2\|\nabla h\|_2^2\le16GC^2$.
Neither bound uses the total mass of $h$.

\emph{4. Keep the complete positive density form.} For $\mu\ge0$, set
\begin{align*}
 P_R(g;\mu)&:=\iint\mu(y)H_R(x-y):
       (4\nabla g\otimes\nabla g-\tfrac43g^6I)(x)\,dxdy,\\
 a_g&:=A_g^2/G^2\le1/81.
\end{align*}
Using \eqref{tailsmall}, the exact decomposition and the Fourier seminorm triangle inequality give
\begin{align*}
 \mathfrak B_{k_R}[f]
 &=\Q_R(\rho)+\int\rho F_R+P_R(g;g^2)+P_R(g;c^2),\\
 P_R(g;g^2)&\ge-2a_gQ_{\Phi,R}(g^2),
\\
 Q_{\Phi,R}(g^2)&=Q_{\Phi,R}(\rho-c^2)
 \le2\Q_R(\rho)+2Q_{\Phi,R}(c^2),\\
 0\le Q_{\Phi,R}(c^2)&\le5M_c^2/R^3,\\
 P_R(g;c^2)&\ge-4M_cR^{-1}\|g\|_6^6
 \ge-4\eta^6M_c/R.
\end{align*}
Here $Q_{\Phi,R}^{1/2}$ is a Fourier Hilbert seminorm; no separated density
cross term is assigned a sign. By \eqref{core-smallness-choice}, $4\eta^6\le\sigma/4$.
Enlarge $R_0$ so that
\[
 R_0\ge1+1280GC^2/(81\sigma),
 \qquad
 20a_gM_c/R^2\le\sigma/4\quad(R\ge R_0).
\]
For every $R\ge R_0$, all preceding bounds combine as
\begin{align*}
 \mathfrak B_{k_R}[f]
 &\ge(1-4a_g)\Q_R(\rho)
       +\frac{M_c}{R}\left(\sigma-4\eta^6-\frac{20a_gM_c}{R^2}\right)\\
 &\ge\frac{77}{81}\Q_R(\rho)+\frac{\sigma M_c}{2R}
 \ge\frac{77}{81}\Q_R(\rho)+\frac{\sigma m_0}{2R}.
\end{align*}
The bounded even positive Gaussian Hessian and $h\in H^1$ satisfy the
integrability hypotheses of Lemma~\ref{phase}. Therefore
\[
 \mathfrak B_{k_R}[h]\ge\mathfrak B_{k_R}[|h|],\qquad
 \Q_R(|h|^2)=\Q_R(f^2),\qquad
 \inf_{R\ge R_0}\mathfrak C_R(h)\ge\sigma m_0/2>0.
\]
Translations preserve the weak gradient, homogeneous distance and complete
pairings by Haar change of variables. The uniform choices have the order
\[
 (a,e,b,m_0,r_\pm,C_0)\ \longrightarrow\ \eta
 \ \longrightarrow\ (\epsilon_0,C)
 \ \longrightarrow\ D\ \longrightarrow\ R_0.
\]
No selected-root parameter enters this argument.
\end{proof}
The proposition supplies a radius-uniform positive core gap. It remains to choose, at each time, a single Gaussian radius compatible with that gap.

\subsection{Gaussian scale selection}
A single Gaussian scale is selected from the projected density.
For $s>0$, set $\Psi_s(x):=\Phi(|x|/s)=\exp(-|x|^2/(2s^2))=s\Phi_s(|x|)$.
Time-dependent Morawetz rescaling and scale averaging have earlier
precedents in \cite{DMMZ,CH}.
The required static comparability and moving-scale error are contained in the next lemma.

\begin{lemma*}[Gaussian selector]
Let \(h\in H^1(\mathbb R^3)\setminus\{0\}\),
\(\rho=|h|^2\), and \(a=\|h\|_4^4\), with
\(\|\nabla h\|_2\le B\).  Given \(L>0\), one may fix
\(\vartheta=\vartheta(B,L)>0\) and associate to \(\rho\) a scale
\(\lambda=\Lambda_\vartheta(\rho)\) for which
\begin{align}
 La&\le\lambda\le C_{\vartheta}a,\label{selectorbounds}\\
 \Q_\lambda(\rho)&\ge c_{\vartheta}\lambda.\label{selectorQ}
\end{align}
For the time-dependent assertion, let $I$ be an open time interval and
$h\in C(I;H^1(\R^3)\setminus\{0\})$ with the same uniform gradient bound.
Assume $p\in L^\infty_{\mathrm{loc}}(I;L^1(\R^3;\R^3))$ and
$r\in L^\infty_{\mathrm{loc}}(I;L^1(\R^3))$, and suppose, in distributions,
\[
 \rho_t=-2\divg p+r,
 \qquad
 \mathcal J_\lambda
 :=2\int p\cdot(\nabla\Psi_\lambda*\rho),
\]
Then \(\lambda(t)\in W^{1,\infty}_{\mathrm{loc}}(I)\), and almost everywhere
\begin{equation}\label{radiuserror}
 \lambda'(t)\mathcal J_\lambda
 \ge -C_{\vartheta}\,a(t)\|r(t)\|_1^2 .
\end{equation}
\end{lemma*}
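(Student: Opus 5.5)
The plan is to select the scale as the largest root of a Gaussian overlap equation, and to derive \eqref{selectorbounds}, \eqref{selectorQ} and \eqref{radiuserror} from that single definition. Set
\[
 O(s):=\iint\Psi_s(x-y)\rho(x)\rho(y)\,dxdy
 =(2\pi)^{-3}s^3\int\Gamma(s\xi)|\widehat\rho(\xi)|^2\,d\xi ,
\]
which is finite because $h\in H^1$. Define
\[
 \Lambda_\vartheta(\rho):=\sup\{s>0:O(s)\ge\vartheta s^4\}.
\]
Young's inequality gives $O(s)\le\|\Psi_s\|_1\|\rho\|_2^2=Cs^3a$. Hence the set above is bounded, and its supremum $\lambda$ satisfies $O(\lambda)=\vartheta\lambda^4$ by continuity. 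Combining the two facts, $\vartheta\lambda^4\le C\lambda^3a$, which is the upper bound $\lambda\le C_\vartheta a$.

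\emph{Lower bound and $\Q$ bound.} I work on the Fourier side and use $1-\Gamma(s\xi)/\Gamma(0)\le Cs|\xi|$. This gives
\[
 s^{-3}O(s)\ge c\bigl(a-Cs\|\rho\|_{\dot H^{1/2}}^2\bigr).
\]
The Sobolev embedding $\dot W^{1,3/2}\subset\dot H^{1/2}$ together with $\|\nabla\rho\|_{3/2}\le2\|h\|_6\|\nabla h\|_2\lesssim B^2$ gives $\|\rho\|_{\dot H^{1/2}}^2\lesssim B^4$, with no mass entering.

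Take $s_0:=\varepsilon a$ with $\varepsilon=\varepsilon(B)$ small. Then $O(s_0)\ge c\varepsilon^3a^4$. Now fix the given $L>0$. If $L\le\varepsilon$, this already gives $O(La)\gtrsim a^4\ge\vartheta(La)^4$ for small $\vartheta$. If $L>\varepsilon$, monotonicity of $O$ (since $\Psi_s$ increases in $s$) gives $O(La)\ge O(s_0)\ge c\varepsilon^3a^4$. In both cases $O(La)\ge\vartheta(La)^4$ as soon as $\vartheta\le c\varepsilon^3\min\{1,L^{-4}\}$, so $\lambda\ge La$. This is where $\vartheta$ depends on both $B$ and $L$.

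For \eqref{selectorQ}, \eqref{densityFourier} gives $\Q_\lambda(\rho)\ge 2(2\pi)^{-3}\int\Gamma(\lambda\xi)|\widehat\rho|^2=2\lambda^{-3}O(\lambda)=2\vartheta\lambda$.

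\emph{Time dependence.} Put $\mathcal F(t,s):=O_t(s)-\vartheta s^4$. From the weak continuity equation,
\[
 \partial_t\mathcal F=2\int\rho_t\,(\Psi_s*\rho)=2\mathcal J_s+2\mathcal R_s,\qquad
 \mathcal R_s:=\int r\,(\Psi_s*\rho).
\]
This is justified by mollifying in time, using $p\in L^\infty_tL^1$ and the bounded derivatives of $\Psi_s*\rho$. For the remainder, $\|\Psi_s*\rho\|_\infty\le\|\Psi_s\|_{3/2}\|h\|_6^2\lesssim s^2B^2$, hence $|\mathcal R_\lambda|\lesssim\lambda^2\|r\|_1$, again without mass.

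Wherever $D:=-\partial_s\mathcal F(t,\lambda)\ge c\vartheta\lambda^3$, the implicit function theorem gives $\lambda'=2(\mathcal J+\mathcal R)/D$. Then
\[
 \lambda'\mathcal J=\frac{2(\mathcal J^2+\mathcal J\mathcal R)}{D}
 \ge-\frac{\mathcal R^2}{2D}
 \gtrsim-\frac{\lambda^4\|r\|_1^2}{\vartheta\lambda^3}
 \ge -C_\vartheta a\|r\|_1^2 ,
\]
using $\lambda\le C_\vartheta a$ in the last step. The same formula bounds $|\lambda'|$ locally, which gives $\lambda\in W^{1,\infty}_{\rm loc}$.

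\emph{Main obstacle.} The difficulty is quantitative transversality, i.e. the lower bound $-\partial_s\mathcal F(t,\lambda)\gtrsim\vartheta\lambda^3$ at the largest root. Here $s\,O'(s)-4O(s)=\iint(|z|^2/s^2-4)\Psi_s\rho\rho$ has no fixed sign.

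I would first try to exploit the envelope structure. Since $\lambda$ is the largest root, $\partial_s\mathcal F(t,\lambda)\le0$. Where this derivative is comparable to $-\vartheta\lambda^3$, the computation above applies. Otherwise I would replace the exponent $4$ by $4+\kappa$, or average the defining equation over a dyadic band in $\vartheta$, so that the root becomes strictly transversal for almost every parameter value while keeping \eqref{selectorbounds} and \eqref{selectorQ} up to constants. At Dini points where $\partial_s\mathcal F(t,\lambda)=0$ and $\lambda$ is differentiable, differentiating $\mathcal F(t,\lambda(t))=0$ forces $\mathcal J=-\mathcal R$. In that case the required bound would follow if $|\lambda'|\lesssim\|r\|_1a$, which I would seek from the upper semicontinuous envelope characterisation of $\lambda$.
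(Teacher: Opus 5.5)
Your static part is correct and essentially the paper's argument. The embedding $\dot W^{1,3/2}\subset\dot H^{1/2}$ is a valid substitute for fractional Leibniz in bounding $\int|\xi||\widehat\rho|^2\,d\xi\lesssim B^4$. One small slip: for $L\le\varepsilon$ you get $O(La)\gtrsim L^3a^4$, not $a^4$, but your choice of $\vartheta$ still covers this case.

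\textbf{The gap: transversality.} The genuine gap is in the time-dependent part. You leave the transversality bound $-\partial_s\mathcal F(t,\lambda)\gtrsim\vartheta\lambda^3$ unproved, and both the Lipschitz regularity and \eqref{radiuserror} rest on it. The suggested workarounds (changing the exponent, averaging in $\vartheta$, Dini points) remain speculation.

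The missing observation is already in your Fourier formula. The quantity $s^{-3}O(s)=(2\pi)^{-3}\int\Gamma(s\xi)|\widehat\rho(\xi)|^2\,d\xi$ is nonincreasing in $s$ because $\Gamma$ is radially decreasing, so $sO'(s)\le 3O(s)$. Equivalently, $(3-|z|^2/s^2)\Psi_s(z)$ is a positive-definite kernel, with Fourier transform $(2\pi)^{3/2}s^5|\xi|^2e^{-s^2|\xi|^2/2}\ge0$. So your claim that $\iint(|z|^2/s^2-4)\Psi_s\rho\rho$ has no fixed sign is false: this quantity is at most $-O(s)<0$.

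Consequently $s^{-4}O(s)$ is strictly decreasing, and the root is unique, so your supremum is the only root. At the root,
\[
 -\partial_s\mathcal F=4\vartheta\lambda^3-O'(\lambda)\ge4\vartheta\lambda^3-3\lambda^{-1}O(\lambda)=\vartheta\lambda^3,
\]
which is exactly the paper's bound $A\ge\vartheta\lambda^3$.

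\textbf{The gap: regularity of $\lambda$.} Even granted transversality, the classical implicit function theorem does not apply. The weak continuity equation makes $t\mapsto O_t(s)$ only Lipschitz, differentiable almost everywhere, and not $C^1$. So you cannot presuppose that $\lambda$ is differentiable and then bound $\lambda'$ ``by the same formula''.

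The paper first proves Lipschitz continuity by an ordered-root comparison, which again uses the monotonicity of $s^{-3}O$. If $s=\lambda(t)\le q=\lambda(z)$, then $O_z(s)\ge(s/q)^3O_z(q)=\vartheta qs^3$, while $O_t(s)=\vartheta s^4$. Hence $\vartheta s^3(q-s)\le|O_z(s)-O_t(s)|\le C_J|t-z|$.

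Only then does the paper differentiate $O_t(\lambda(t))=\vartheta\lambda(t)^4$. It does so at the common a.e.\ differentiability points of $\lambda$ and of $\rho$, viewed in the dual of $C_b^1$, obtaining $A\lambda'=2(\mathcal J_\lambda+S)$. From there your completion of the square finishes the proof. So does your remainder bound via $\|\Psi_s\|_{3/2}\|h\|_6^2$ combined with $\lambda\le C_\vartheta a$. The paper instead uses $\|\Psi_\lambda*\rho\|_\infty\le\|\Psi_\lambda\|_2\|\rho\|_2$, which produces the factor $a$ without the upper bound on $\lambda$.
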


\begin{proof}
\emph{1. The same density root for every assertion.} For $s>0$, set
\begin{align*}
 \mathcal G(s;\rho)&:=\int\rho(\Psi_s*\rho)
 =(2\pi)^{-3}s^3\int\Gamma(s\xi)|\widehat\rho(\xi)|^2\,d\xi,\\
 \mathfrak H_\rho(\sigma)&:=e^{-4\sigma}\mathcal G(e^\sigma;\rho),\qquad
 d\mu_\rho(\xi):=(2\pi)^{-3}|\widehat\rho(\xi)|^2\,d\xi.
\end{align*}
Since $h\in H^1\setminus\{0\}$,
\[
 \rho\in L^1\cap L^2,\qquad \mu_\rho(\R^3)=\|\rho\|_2^2=a>0,\qquad
 s^{-4}\mathcal G(s;\rho)=s^{-1}\int\Gamma(s\xi)\,d\mu_\rho.
\]
Gaussian monotonicity, differentiation and dominated convergence give
\begin{align*}
 s\partial_s\mathcal G(s;\rho)&\le3\mathcal G(s;\rho),\\
 -\partial_\sigma\log\mathfrak H_\rho(\sigma)&\ge1,\\
 \lim_{\sigma\to-\infty}\mathfrak H_\rho(\sigma)&=\infty,\qquad
 \lim_{\sigma\to+\infty}\mathfrak H_\rho(\sigma)=0.
\end{align*}
Thus $\mathfrak H_\rho:\R\to(0,\infty)$ is a strictly decreasing bijection.
For $\vartheta>0$, define
\begin{equation}\label{selector-root-definition}
 \lambda:=\Lambda_\vartheta(\rho)
 :=\exp(\mathfrak H_\rho^{-1}(\vartheta)),\qquad
 \mathcal G(\lambda;\rho)=\vartheta\lambda^4.
\end{equation}

\emph{2. Choose the level before the profile.} Fractional Leibniz
\cite{GrafakosOh}
with pairs $(3,6),(6,3)$, and homogeneous Sobolev
\cite{Tao}, give
\begin{align*}
 \|fg\|_{\dot H^{1/2}}
 &\le C\bigl(\||\nabla|^{1/2}f\|_3\|g\|_6
             +\|f\|_6\||\nabla|^{1/2}g\|_3\bigr)\\
 &\le C\|\nabla f\|_2\|\nabla g\|_2,\qquad f,g\in H^1.
\end{align*}
For $H^1$ data the passage from smooth data uses density
\cite{Hunter}, convergence of products in $L^1$,
and Fatou. With $f=g=|h|$, the modulus bound
\cite{ZhengPolar} yields
\[
 \int|\xi|\,d\mu_\rho(\xi)\le C_*B^4.
\]
Fix the profile-independent constants
\[
 D_B:=2C_*B^4+1,\qquad
 \vartheta:=\frac{\Gamma(0)e^{-(LD_B)^2/2}}{4L}>0.
\]
The frequency moment gives
\begin{align*}
 \mu_\rho\{|\xi|>D_B/a\}
 &\le\frac a{D_B}\int|\xi|\,d\mu_\rho\le a/2,\\
 \mathfrak H_\rho(\log(La))
 &\ge\frac{\Gamma(0)e^{-(LD_B)^2/2}}{La}\,\frac a2
 =2\vartheta\ge\vartheta.
\end{align*}
Since $\mathfrak H_\rho$ is decreasing,
\[
 La\le\lambda.
\]
Moreover,
\begin{align*}
 \vartheta=\lambda^{-1}\int\Gamma(\lambda\xi)\,d\mu_\rho
 &\le\Gamma(0)a/\lambda,\\
 \lambda&\le\Gamma(0)a/\vartheta,\\
 \Q_\lambda(\rho)
 &\ge
 2\int\Gamma(\lambda\xi)\,d\mu_\rho
 =2\vartheta\lambda.
\end{align*}
The last bound uses \eqref{densityFourier}. This proves the static assertions with the same root \eqref{selector-root-definition}.

\emph{3. Weak time differentiation on one common set.} Let
\begin{align*}
 X&:=C_b^1(\R^3;\R),\qquad
 \|\phi\|_X:=\max\{\|\phi\|_\infty,\|\nabla\phi\|_\infty\},\\
 Jf(\phi)&:=\int f\phi,\qquad T_sf:=\Psi_s*f.
\end{align*}
Cutoff and mollification
\cite{Hunter}
extend the distributional identity to $X$. The approximants and their first
derivatives are uniformly bounded, so dominated convergence against
$\rho,p,r\in L^1$ gives the extension. In $X^*$,
\begin{gather*}
 J\rho\in C(I;X^*),\qquad
 \mathcal D_t(\phi):=2\int p(t)\cdot\nabla\phi+\int r(t)\phi,\\
 \|\mathcal D_t\|_{X^*}\le2\|p(t)\|_1+\|r(t)\|_1,\qquad
 (J\rho)'(t)=\mathcal D_t\quad\text{for a.e. }t\in I.
\end{gather*}
The last line is the Banach-valued weak fundamental theorem
\cite{Hunter}. Its exceptional set is common
to all $X$ tests; no strong $L^1$ derivative of $\rho$ is assumed.

\emph{4. Lipschitz continuity of the selected root.} Fix a compact interval
$J\Subset I$. The static bounds and continuous nonzero $h$ give
\[
 \exists m,M,M_\rho>0\ \forall t\in J:\quad
 m\le\lambda(t)\le M,\qquad \|\rho(t)\|_1\le M_\rho.
\]
The preceding derivative bound makes $J\rho$ Lipschitz on $J$.
Kernel symmetry gives, for $f,g\in L^1$,
\begin{gather*}
 \mathcal G(s;f)-\mathcal G(s;g)=(Jf-Jg)(T_s(f+g)),\qquad
 \|T_sf\|_X\le\max\{1,3/s\}\|f\|_1,\\
 \exists C_J<\infty\ \forall t,z,v\in J:\quad
 |\mathcal G(\lambda(v);\rho(t))-\mathcal G(\lambda(v);\rho(z))|
 \le C_J|t-z|.
\end{gather*}
If $s=\lambda(t)\le\lambda(z)=q$, radial monotonicity gives the ordered-root
comparison
\begin{align*}
 q\vartheta s^3&\le\mathcal G(s;\rho(z)),\qquad
 \mathcal G(s;\rho(t))=\vartheta s^4,\\
 q-s&\le\frac{|\mathcal G(s;\rho(z))-\mathcal G(s;\rho(t))|}{\vartheta s^3}
 \le\frac{C_J}{\vartheta m^3}|t-z|.
\end{align*}
Exchange $t,z$ for the opposite ordering. Therefore
\[
 |\lambda(t)-\lambda(z)|\le\frac{C_J}{\vartheta m^3}|t-z|,\qquad t,z\in J,
 \qquad\lambda\in W^{1,\infty}_{\rm loc}(I).
\]
The identification of the ordinary a.e. derivative with the weak derivative
uses \cite{Hunter}.

\emph{5. Implicit identity and the signed time error.} For $s,q\ge m>0$,
direct Gaussian differentiation gives
\[
 \|T_sf-T_qf\|_X\le\max\{4/m,15/m^2\}|s-q|\|f\|_1.
\]
Thus the test $T_{\lambda(z)}(\rho(z)+\rho(t))$ is continuous in $X$.
Factor the density increment by the quadratic identity in Step~4 and
differentiate the fixed-density scale increment. On the common set where
$J\rho$ and $\lambda$ are differentiable, \eqref{selector-root-definition} gives
\begin{align*}
 4\vartheta\lambda^3\lambda'
 &=\partial_s\mathcal G(\lambda;\rho)\lambda'
       +2\mathcal D_t(T_\lambda\rho)\\
 &=\partial_s\mathcal G(\lambda;\rho)\lambda'
       +2(\mathcal J_\lambda+S),\qquad
 S:=\int r(\Psi_\lambda*\rho),\\
 A\lambda'&=2(\mathcal J_\lambda+S),\qquad
 A:=4\vartheta\lambda^3-\partial_s\mathcal G(\lambda;\rho)
       \ge\vartheta\lambda^3>0.
\end{align*}
The lower bound for $A$ uses $s\partial_s\mathcal G\le3\mathcal G$ at
the root. Hence
\begin{align*}
 \lambda'\mathcal J_\lambda
 &=\frac{2\mathcal J_\lambda(\mathcal J_\lambda+S)}A
 \ge-\frac{S^2}{2A}
 \ge-\frac{S^2}{2\vartheta\lambda^3},\\
 |S|^2&\le\|r\|_1^2\|\Psi_\lambda*\rho\|_\infty^2
 \le\|r\|_1^2\|\Psi_\lambda\|_2^2\|\rho\|_2^2
 =\pi^{3/2}\lambda^3a\|r\|_1^2,\\
 \lambda'\mathcal J_\lambda&\ge
 -\frac{\pi^{3/2}}{2\vartheta}\,a\|r\|_1^2.
\end{align*}
Here $x(x+y)\ge-y^2/4$ gives the first inequality. The common constants
\[
 C_\vartheta:=\max\{\Gamma(0)/\vartheta,\pi^{3/2}/(2\vartheta)\},\qquad
 c_\vartheta:=2\vartheta
\]
prove \eqref{selectorbounds}, \eqref{selectorQ} and \eqref{radiuserror}.
\end{proof}
The selector therefore provides the same density-determined scale for both the coercive term and the moving-kernel correction. We now apply it to the projected critical orbit.

\emph{Application to the same projected orbit.} The low-pass and inverse-gradient
estimates \cite{Tao} give
\begin{gather*}
 h=P_{>\nu}u\in C_tH^1_x,\qquad \|\nabla h\|_2\le2\sqrt G,\\
 \widetilde h(t,y):=N(t)^{-1/2}h(t,x(t)+y/N(t)),\qquad
 \operatorname{dist}_{\dot H^1}(\widetilde h(t),\K)\le\eta_\nu.
\end{gather*}
Proposition~\ref{spatial}, the compact fixed-ball lower bound and
Cauchy--Schwarz give constants $R_0,a_*>0$ such that, for all $t\ge0$ and
all sufficiently small $\nu$,
\[
 \|\widetilde h(t)\|_4^4=N(t)a(t)\ge a_*.
\]
The choices and their consequence are
\begin{gather*}
 R_0,a_*\ \longrightarrow\ L>0\text{ with }La_*\ge R_0
 \ \longrightarrow\ \vartheta=\vartheta(2\sqrt G,L),\\
 \lambda(t):=\Lambda_\vartheta(|h(t)|^2),\qquad
 N(t)\lambda(t)\ge LN(t)a(t)\ge R_0.
\end{gather*}
Affine change of variables, including the current-current and focusing
terms, gives the exact scaling identities
\[
 \mathfrak B_{k_{N\lambda}}[\widetilde h]
 =N\mathfrak B_{k_\lambda}[h],\qquad
 \Q_{N\lambda}(|\widetilde h|^2)=N\Q_\lambda(|h|^2).
\]
The Hessian, bilaplacian and pair-volume degrees are $-1,-3,6$.
Apply \eqref{spatialbound} at $N\lambda$, drop its positive remainder,
and use the same selected root:
\begin{equation}\label{quarticcoercivity}
 \mathfrak B_{k_\lambda}[h]
 \ge\frac{77}{81}\Q_\lambda(|h|^2)
 \ge\frac{154}{81}\vartheta\lambda
 \ge c_u a(t),\qquad c_u:=\frac{154}{81}\vartheta L>0.
\end{equation}
The constants are fixed before the final small cutoff $\nu$; no upper
bound for $N(t)a(t)$ is required.

For fixed $\nu>0$, the inverse-gradient and gradient-multiplier bounds imply
\[
 \|h(t)-h(s)\|_{H^1}\le C_\nu\|u(t)-u(s)\|_{\dot H^1}
 \qquad h\in C_tH^1_x
 \qquad p:=\im(\bar h\nabla h)\in C_tL^1_x.
\]
Put $l:=P_{\le\nu}u$, $w:=P_{\le\nu}F(u)$ and
$E:=F(u)-F(h)-w$. Bernstein
\cite{Tao} gives compatible realizations
\begin{gather*}
 P_{\le\nu}:L^1\to L^2,\quad\|P_{\le\nu}\|\le C\nu^{3/2},\qquad
 P_{\le\nu}:L^{6/5}\to L^2,\quad\|P_{\le\nu}\|\le C\nu,\\
 w\in C_tL^2_x,\qquad
 r:=-2\im(E\bar h)=-2\im\bigl((F(u)-w)\bar h\bigr)\in C_tL^1_x.
\end{gather*}
The last line uses $L^{6/5}L^6\subset L^1$ and $L^2L^2\subset L^1$.

For clarity, consider
\[
 v\in C_tH^1_x,
 \qquad F\in C_tL^{6/5}_x,
 \qquad w\in C_tL^2_x,
 \qquad F-w\in L^2_{t,\rm loc}H^{-1}_x,
\]
with $v_t=i\Delta v+i(F-w)$ in mild form. The Hilbert-triple product rule
\cite{Hunter}, applied to
$v$ and $\psi v$ for $\psi\in C_c^\infty(\R^3;\R)$ and followed by spatial
integration by parts, yields the distributional identity
\[
 \partial_t|v|^2=-2\divg\im(\bar v\nabla v)-2\im((F-w)\bar v).
\]
Schwartz testing of Duhamel gives the mild-to-weak implication
\cite{Tao}.
The smooth identity \cite{Tao} has thereby been extended
to the stated weak domain, with Laplacian coefficient $1$.
Commuting $P_{>\nu}$ with the free flow proves the projected mild formula;
the preceding implication gives $\rho_t=-2\divg p+r$ for the actual $h$.

For sufficiently small $\nu$, $N(t)a(t)\ge a_*>0$ on the forward ray.
Continuity at $0$ supplies an open interval $I\supset[0,\infty)$ on which
the original projected solution is nonzero. Apply the selector on this
interval. On $t\ge0$, the gauge identity and Lemma~\ref{L4lemma} give
\begin{align*}
 \im(F(u)\bar u)&=0
 \qquad r=2\im\bigl(F(u)\bar l+w\bar h\bigr),\\
 \|l\|_\infty&\le2\nu A_*,\qquad
 \|w\|_2\le C\nu^{3/2}\|F(u)\|_1,\\
 \|r\|_1&\le2\|F(u)\|_1\|l\|_\infty+2\|w\|_2\mathfrak m_\nu
 \le D_u(\nu+\nu^{1/2}\delta_\nu).
\end{align*}
Differentiating the explicit kernel, including its continuous value at zero,
\[
 \partial_R\nabla k_R=\nabla\Psi_R
 \qquad \mathcal J_\lambda
 =2\int p\cdot[(\partial_\lambda\nabla k_\lambda)*\rho].
\]
Thus the selector's current is the literal moving-kernel current. Finally,
\begin{gather*}
 \varepsilon_{\rm sc}(\nu):=C_u(\nu^2+\nu\delta_\nu^2)
 \longrightarrow0,\qquad
 \lambda\in W^{1,\infty}_{\rm loc}(I),
\\
 \lambda'(t)\mathcal J_\lambda(t)
 \ge-\varepsilon_{\rm sc}(\nu)a(t)
 \quad\text{for a.e. }t\ge0.
\end{gather*}

\section{Source Errors and Elimination of the Critical Element}
\subsection{Kernel bounds and separated frequencies}
For an even kernel $k\in C^3(\R^3\setminus\{0\})$, define
\[
 \|k\|_{\mathcal C_*^3}:=
 \sup_{x\ne0}\sum_{m=1}^3|x|^{m-1}\|D^mk(x)\|_{\rm op}.
\]
For the Gaussian family,
\begin{equation}\label{sourcekernel}
 \sup_{R>0}\|k_R\|_{\mathcal C_*^3}<\infty.
\end{equation}
Indeed, set $M_0:=1+\int_\R e^{-s^2/2}\,ds<\infty$. For $R>0$ and
$r=|x|>0$, direct differentiation of \eqref{gaussiankernel} gives
\begin{align*}
 r\Xi_R(r)&\le M_0,\qquad rL_R(r)\le3M_0,\qquad
 \|Dk_R(x)\|\le M_0,\\
 0\le H_R(x)[v,v]&\le\Xi_R(r)|v|^2
 \qquad\|D^2k_R(x)\|\le2\Xi_R(r),\\
 |D(H_R[v,v])(x)[d]|&\le12L_R(r)r^{-1}|d||v|^2
 \qquad\|D^3k_R(x)\|\le24L_R(r)/r,\\
 \sum_{m=1}^3r^{m-1}\|D^mk_R(x)\|
 &\le M_0+2M_0+72M_0=75M_0.
\end{align*}
The two implications use polarization on unit vectors. Evenness and
smoothness away from zero follow from the explicit Gaussian primitive.
Only this radius-uniform seminorm enters the spatial defect estimates.

\emph{Input defect and its exact normal form.} Keep $h=P_{>\nu}u$,
$l=P_{\le\nu}u$ and the full-density weights $q,q_m$. Define
\begin{gather*}
 \mathfrak c_\nu:=F(u)-F(h),\qquad
 D:=\tfrac23(|u|^6-|h|^6),\qquad \Pi:=\re(\mathfrak c_\nu\bar h),\\
 \mathfrak p_\nu:=4\int_0^1|h+\theta l|^4
       \re((h+\theta l)\bar l)\,d\theta-\re(F(u)\bar l).
\end{gather*}
First use smooth fields with the displayed products integrable. The
fundamental theorem, complex chain rule and gauge identity give
\begin{align*}
 D&=4\int_0^1|h+\theta l|^4\re((h+\theta l)\bar l)\,d\theta,\\
 \partial_jD&=4\re(\mathfrak c_\nu\partial_j\bar h)
       +4\re(F(u)\partial_j\bar l),\\
 \re(\mathfrak c_\nu\partial_j\bar h-h\partial_j\overline{\mathfrak c_\nu})
 &=2\re(\mathfrak c_\nu\partial_j\bar h)-\partial_j\Pi,\\
 D-2\Pi&=-2\mathfrak p_\nu,\qquad
 \im(\mathfrak c_\nu\bar h)=-\im(F(u)\bar l).
\end{align*}
Two spatial integrations by parts therefore yield
\begin{equation}\label{input-normal-form}
 \begin{aligned}
 \mathfrak S_k(\mathfrak c_\nu,h)
 &=\int(\divg q)(D-2\Pi)
       +4\re\int F(u)(q\cdot\nabla\bar l-iq_m\bar l)\\
 &=-2\int(\divg q)\mathfrak p_\nu
       +4\re\int F(u)(q\cdot\nabla\bar l-iq_m\bar l).
 \end{aligned}
\end{equation}
Every derivative now falls on the low input or the weight.

\emph{Actual weight and source bounds.} Put $m:=\|h\|_2$ and
$b:=\|\nabla h\|_2$. Then
\begin{align}
 \|q\|_\infty&\le M_0m^2,\qquad \|q_m\|_\infty\le3M_0mb,\label{full-weight-bounds}\\
 \divg q(x)&=\int|h(y)|^2\Delta k_R(x-y)\,dy,\notag\\
 |\divg q(x)|&\le3M_0\|h\|_2
       \bigl\||x-\cdot|^{-1}h\bigr\|_2
 \le6M_0mb,\notag
\\
 \|\divg q\|_\infty+\|q_m\|_\infty&\le9M_0mb.\notag
\end{align}
The last inequality uses \eqref{Hardy}. The inverse-distance bound is used
only off the null diagonal.
Uniform $L^5$ boundedness of the rescaled Schwartz convolution
\cite{Tao} gives,
for every real $\nu>0$,
\begin{align*}
 \|h\|_5&\le(1+C_5)\|u\|_5,\qquad
 f=\|u\|_5^5,\qquad \|F(h)\|_1\le(1+C_5)^5f,\\
 |\mathfrak p_\nu|&\le80|l|(|u|^5+|h|^5),\\
 \|\mathfrak p_\nu\|_1&\le80\bigl[1+(1+C_5)^5\bigr]\|l\|_\infty f.
\end{align*}
Insert these bounds into \eqref{input-normal-form}:
\begin{align*}
 |\mathfrak S_k(\mathfrak c_\nu,h)|
 &\le12M_0mb\bigl[80(1+(1+C_5)^5)+1\bigr]\|l\|_\infty f
       +12M_0m^2\|\nabla l\|_\infty f,\\
 \|l\|_\infty&\le C\nu\mathcal Y,\qquad
 \|\nabla l\|_\infty\le C\nu^2\mathcal Y,\qquad
 b\le2B,\qquad\nu m\le\delta_\nu,
\\
 |\mathfrak S_k(\mathfrak c_\nu,h)|
 &\le C_u(\delta_\nu+\delta_\nu^2)f\mathcal Y.
\end{align*}
All spatial integrals are finite: $\mathfrak p_\nu,F(u)\in L^1$ pair
with bounded weights, low input and weak gradient. The $L^\infty$ and
$L^6$ multiplier representatives and their derivatives agree in
$\mathcal S'$ and hence a.e.

\emph{Continuity before time integration.} The quintic difference bound is
\[
 \|F(v)-F(w)\|_1\le\|v-w\|_6
       (\|v\|_4+\|w\|_4)^2(\|v\|_6+\|w\|_6)^2.
\]
Together with the uniform $L^4$ bound and the original $L^6$ continuity,
\begin{gather*}
 F(u)\in C_tL^1_x,\qquad
 \mathfrak p_\nu=\tfrac23(|u|^6-|h|^6)-\re(F(u)\bar l)\in C_tL^1_x,\\
 l,\nabla l\in C_tL^\infty_x,\qquad
 |h|^2,\im(\bar h\nabla h)\in C_tL^1_x.
\end{gather*}
Bounded-kernel bilinear pairings are jointly continuous in these $L^1$
factors and in $R>0$ (dominated convergence on each compact radius range).
Therefore \eqref{input-normal-form}, evaluated at the actual continuous
$R=\lambda(t)>0$, is continuous and absolutely integrable on $[0,T]$.

\emph{Spatial extension to weak data.} For fixed $t$, $\nu,R>0$, let
\[
 u_M:=P_{\le M}u,\quad l_M:=P_{\le\nu}u_M,\quad h_M:=u_M-l_M,\qquad M\in\mathbb N_{>0}.
\]
Compact Fourier support and the classical/distributional derivative identities
\cite{Dyatlov} give smooth representatives. Bernstein and the product
rule imply
\begin{gather*}
 u_M,h_M\in L^6\cap L^\infty,\quad\nabla u_M,\nabla h_M\in L^2,\quad h_M\in L^2,\\
 F(u_M),F(h_M),\nabla F(u_M),\nabla F(h_M)\in L^2,\\
 D_M,\Pi_M,\nabla D_M,\nabla\Pi_M\in L^1,\qquad q_M,q_{m,M}\in C_b^1.
\end{gather*}
Here $D_M,\Pi_M$ are the above scalar defects with $(u,h,l)$ replaced by
$(u_M,h_M,l_M)$. All terms of both integrations by parts are integrable.
The cutoff symbols converge pointwise to one and are uniformly bounded;
Plancherel dominated convergence, the fixed highpass, and Sobolev give
\begin{gather*}
 u_M\to u\text{ in }\dot H^1\cap L^6,\qquad h_M\to h\text{ in }H^1,\\
 \sup_M\|u_M\|_4<\infty
 \quad\text{\cite{Tao}},\\
 F(u_M)\to F(u)\text{ in }L^1,\qquad
 l_M,\nabla l_M\to l,\nabla l\text{ in }L^\infty,\\
 \mathfrak p_{\nu,M},|h_M|^2,\im(\bar h_M\nabla h_M)
 \longrightarrow\mathfrak p_\nu,|h|^2,\im(\bar h\nabla h)
       \quad\text{in }L^1.
\end{gather*}
The same kernel-pairing bounds now give
\[
 \mathfrak S_{k_R}(F(u_M)-F(h_M),h_M)
 \longrightarrow \mathfrak S_{k_\lambda}(\mathfrak c_\nu,h).
\]
This defines the energy-class input source, independently of classical
representatives. Its participation in the time-dependent equation is justified below
by the energy-class limiting argument.

For the same $\nu$-independent times $I_j=[0,T_j]$, use
$f\mathcal Y\le f^2+\mathcal Y^2$ and \eqref{fcompare}:
\begin{equation}\label{inputdefect}
 \begin{aligned}
 |\mathfrak S_k(\mathfrak c_\nu,h)(t)|
 &\le C_u(\delta_\nu+\delta_\nu^2)f(t)\mathcal Y(t),\\
 \int_{I_j}|\mathfrak S_k(\mathfrak c_\nu,h)|\,dt
 &\le C_u(\delta_\nu+\delta_\nu^2)\mathfrak K_\nu(I_j).
 \end{aligned}
\end{equation}
Define the output defect with its sign:
\[
 \mathfrak o_\nu:=-P_{\le\nu}F(u),\qquad E=\mathfrak c_\nu+\mathfrak o_\nu.
\]
The input defect is now controlled; the remaining output defect is handled by the frequency gap.

\begin{proposition*}[Frequency-gap source estimate]
For $I_j=[0,T_j]$ and the same full-density weights $q,q_m$ as above,
\begin{equation}\label{gsource}
 \int_{I_j}|\mathfrak S_k(\mathfrak o_\nu,h)|dt\le C_u\sqrt{\delta_\nu}\,\mathfrak K_\nu(I_j)
\end{equation}
for all sufficiently small $\nu$. Its constant depends only on the fixed orbit and the seminorm threshold in \eqref{sourcekernel}.
\end{proposition*}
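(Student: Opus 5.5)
The plan is to treat $\mathfrak S_k(\mathfrak o_\nu,h)$ as a trilinear pairing among the output slot $\mathfrak o_\nu=-P_{\le\nu}F(u)$, the high field $h$, and the full-density weights $q=\nabla k*\rho$, $q_m$. First I integrate by parts once, so that no derivative falls on $\mathfrak o_\nu$ except through the weight:
\[
 \mathfrak S_k(\mathfrak o_\nu,h)=-4\re\int q\cdot\mathfrak o_\nu\nabla\bar h-2\int(\divg q)\re(\mathfrak o_\nu\bar h)-4\int q_m\im(\mathfrak o_\nu\bar h).
\]
This is justified as in the spatial extension step for the input defect, using the truncations $u_M$ and the kernel bound \eqref{sourcekernel}. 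The weights are held fixed. Only the solution slot is split, at a large multiple $K\nu$ with $K\ge8$ to be fixed last:
\[
 h=h_{\rm near}+h_{\rm far},\qquad h_{\rm far}:=P_{>K\nu}h.
\]
Each weight is split in parallel as $q=P_{\le K\nu/8}q+P_{>K\nu/8}q$, and likewise for $\divg q$ and $q_m$.

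\emph{Step 1: exact cancellation.} The low weight has Fourier support in $|\xi|\le K\nu/4$, and $\mathfrak o_\nu$ has support in $|\xi|\le2\nu$. Their product is therefore supported in $|\xi|\le K\nu/2$, while $\widehat{h_{\rm far}}$ and $\widehat{\nabla h_{\rm far}}$ vanish on $|\xi|\le K\nu$. By the product-support rule \cite{Dyatlov} and Plancherel, every low-weight/far pairing is zero. This is the ``low-weight far interaction'' that is removed outright.

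\emph{Step 2: the three remaining terms.}
\begin{itemize}
 \item[(i)] \emph{Near field against the full weight.} The near field has $\|h_{\rm near}\|_2\le m$, $\|\nabla h_{\rm near}\|_2\lesssim K\nu m\le K\delta_\nu$, and $\|h_{\rm near}\|_4\lesssim a^{1/4}$.
 \item[(ii)] \emph{Far field against the high weight, gradient term.} For $P_{>K\nu/8}q$, the Hessian bound $\|D^2k\|\lesssim|x|^{-1}$ makes $\nabla q$ a Riesz-type potential $I_1$-like in $\rho$ and $\nabla\rho$. The highpass gains one factor $(K\nu)^{-1}$, so $\|P_{>K\nu/8}q\|_6\lesssim(K\nu)^{-1}\|I_1(\nabla\rho)\|_6\lesssim(K\nu)^{-1}\|\bar h\nabla h\|_2$. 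Here I would instead route through $I_2:L^{4/3}\to L^{12}$ using $\rho\in L^{4/3}$ by interpolation of $\|\rho\|_1=m^2$ and $\|\rho\|_2=a^{1/2}$.
 \item[(iii)] \emph{Far field against the high weight, $\divg q$ and $q_m$ terms.} These are handled the same way, with $D^3k\lesssim|x|^{-2}$, using $I_1:L^2\to L^6$ on $\rho$ or on $j$.
\end{itemize}
The output factor is controlled by Bernstein together with \eqref{lowpoint}:
\[
 \|\mathfrak o_\nu\|_\infty\lesssim\nu\,\mathcal Y,\qquad \|\mathfrak o_\nu\|_{2}\lesssim\nu^{3/2}f,
\]
and interpolations between these. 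Each term should then be bounded by a product of a power of $\delta_\nu$, a power of $K$, and a quantity of the form $a^{1/2}(f+\mathcal Y)$. The near terms should behave like $K\delta_\nu$ and the far terms like $K^{-1}$ (or $K^{-1}\delta_\nu^{0}$). Note that powers of $m$ must always be paired with matching powers of $\nu$, so that only $\delta_\nu=\nu m$ survives, as in \eqref{full-weight-bounds}.

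\emph{Step 3: balancing and time integration.} Choosing $K:=\delta_\nu^{-1/2}$ (admissible once $\delta_\nu$ is small, by \eqref{smallfreq}) gives a pointwise bound
\[
 |\mathfrak S_k(\mathfrak o_\nu,h)(t)|\le C_u\sqrt{\delta_\nu}\,\bigl(a(t)+f(t)^2+\mathcal Y(t)^2\bigr).
\]
Integrating over $I_j$ and applying \eqref{fcompare} then yields \eqref{gsource}. Continuity in $t$ and measurability at $R=\lambda(t)$ follow as for \eqref{inputdefect}.

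\emph{Main obstacle.} I expect the main difficulty to be Step 2(ii)--(iii): obtaining highpass bounds on the weights that involve only $a^{1/2}$ and $\delta_\nu$, and never a bare power of $m$, since $m$ may diverge as $\nu\to0$. My first attempt would be to write $P_{>K\nu/8}q=(K\nu)^{-2}\widetilde P\,\Delta q$ with $\Delta q=\nabla\Delta k*\rho$, and bound $|\nabla\Delta k|\lesssim|x|^{-2}$ by $I_1$ or $I_2$ acting on $\rho\in L^{4/3}\cap L^2$. The extra $\nu$-powers carried by $\mathfrak o_\nu$ would absorb any residual $m$ through $\nu m=\delta_\nu$. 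If this exponent count fails, I would fall back to splitting at two different multiples of $\nu$ for the weight and for $h$, and optimize both.
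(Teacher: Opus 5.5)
\textbf{Where the proposal stands.} Your architecture is the paper's: the weights are held fixed, only the slot is split at $K\nu$, the low weights cancel against $P_{>K\nu}h$ by exact Fourier-support separation, $K=\delta_\nu^{-1/2}$, and the time integral is closed via \eqref{fcompare}. Moving the derivative onto $h$ is also harmless: the paper uses that form for the near term, and for the far terms $\|\nabla H\|_2\|\mathfrak o_\nu\|_3$ scales exactly like $\|H\|_6\|\nabla\mathfrak o_\nu\|_{3/2}$. The gap is Step~2, which carries the whole content of the proposition, and two of its ingredients fail as stated.

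\emph{Near term.} Against the full weights you only have $\|q\|_\infty\le M_0m^2$ and $\|\divg q\|_\infty+\|q_m\|_\infty\le 9M_0m\|\nabla h\|_2$. So the near slot must supply $\nu^2$ (resp.\ $\nu$) with no mass attached, and the bounds you list cannot do this. For instance, $\|q\|_\infty\|\mathfrak o_\nu\|_2\|\nabla h_{\rm near}\|_2\lesssim m^2\cdot\nu^{3/2}f\cdot K\nu m=K\delta_\nu^{5/2}m^{1/2}f$. Bernstein from $\|\nabla h_{\rm near}\|_2\le B$ or $\|h_{\rm near}\|_4\lesssim a^{1/4}$ likewise leaves a positive power of $m$, which is not bounded as $\nu\downarrow0$. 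The missing input is \eqref{lowpoint} applied to the near slot itself, $v=(P_{\le K\nu}-P_{\le\nu})u$. It gives $\|v\|_\infty\lesssim K\nu\mathcal Y$ and $\|\nabla v\|_\infty\lesssim K^2\nu^2\mathcal Y$, and pairing with $\|\mathfrak o_\nu\|_1\lesssim f$ yields $(K\delta_\nu+K^2\delta_\nu^2)f\mathcal Y$. You attached \eqref{lowpoint} to $\mathfrak o_\nu$ instead, but it concerns $P_Lu$. The output $\mathfrak o_\nu=-P_{\le\nu}F(u)$ is controlled simply by Young--Bernstein, $\|\mathfrak o_\nu\|_p\lesssim\nu^{3(1-1/p)}f$.

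\emph{Far $q_m$ term.} Applying $I_1$ to $j$ needs $j\in L^2$, which is unavailable at energy regularity. Routing $I_2$ through $\|\rho\|_{4/3}\le m a^{1/4}$ reintroduces the bare $m$ you wanted to avoid. Your first attempt for $q$ has the same defect: $\|\bar h\nabla h\|_2$ is not controlled, and a single highpass gain would leave a power of $m$ anyway. Note also that in $\R^3$ the kernel $|x|^{-1}$ is that of $I_2$, not $I_1$.

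The quantity that belongs in $L^{4/3}$ is the current: $\|p\|_{4/3}\le 3\|h\|_4\|\nabla h\|_2\lesssim Ba^{1/4}$. Then $|D^2k|\lesssim|x|^{-1}$ and $I_2:L^{4/3}\to L^{12}$ give $\|q_m^{\rm hi}\|_{12}\lesssim(K\nu)^{-1}Ba^{1/4}$, hence $\|H\|_4\|q_m^{\rm hi}\|_{12}\|\mathfrak o_\nu\|_{3/2}\lesssim K^{-1}Bfa^{1/2}$.

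For $q$ and $\divg q$ your fallback is the right one. The bound $|D^3k|\lesssim|x|^{-2}$ and $I_1:L^2\to L^6$ give $\|D^2q\|_6\lesssim\|\rho\|_2=a^{1/2}$, with two highpass gains for $q$ and one for $\divg q$. Hence
$\|q^{\rm hi}\|_6\|\mathfrak o_\nu\|_3\|\nabla H\|_2+\|\divg q^{\rm hi}\|_6\|\mathfrak o_\nu\|_{3/2}\|H\|_6\lesssim K^{-1}Bfa^{1/2}$.
With these three inputs the total is $(K^{-1}+K\delta_\nu+K^2\delta_\nu^2)(f\mathcal Y+fa^{1/2})$, and your Step~3 then closes as written.
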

\begin{proof}
\emph{1. A linear slot with the weights held fixed.} The weights $q,q_m$
always use the complete $h$. For a weak $H^1$ slot $z$, define
\[
 \mathfrak S_k(\mathfrak o_\nu,z):=-2\int q\cdot
 \re(\mathfrak o_\nu\nabla\bar z-z\nabla\overline{\mathfrak o_\nu})
 -4\int q_m\im(\mathfrak o_\nu\bar z).
\]
This is real linear in $z$. The derivative-removed identity is
\begin{equation}\label{sourceIBP}
 \mathfrak S_k(\mathfrak o_\nu,z)
 =4\re\int\bar zq\cdot\nabla\mathfrak o_\nu
 +2\re\int\bar z\mathfrak o_\nu\divg q
 -4\im\int q_m\mathfrak o_\nu\bar z.
\end{equation}
For its weak-domain justification, Young--Bernstein
\cite{Tao} gives
\[
 \|\mathfrak o_\nu\|_2\le C\nu^{3/2}f,\qquad
 \|\nabla\mathfrak o_\nu\|_2\le C\nu^{5/2}f,\qquad
 \mathfrak o_\nu\in H^1.
\]
All $L^p$ and derivative realizations agree in $\mathcal S'$ and a.e.
Smooth both $H^1$ slots by $P_{\le M}$, keeping $q,q_m$ fixed. Compact
Fourier support \cite{Dyatlov} and
$L^2L^2\subset L^1$ imply that the source products and their first
derivatives belong to $L^1$, while $q,q_m\in C_b^1$.
Whole-space integration by parts therefore proves \eqref{sourceIBP}
for the smooth slots. Moreover,
\[
 P_{\le M}z\to z,\qquad P_{\le M}\mathfrak o_\nu\to\mathfrak o_\nu
 \quad\text{in }H^1.
\]
Continuity of bounded weighted $L^2$ pairings passes \eqref{sourceIBP}
to the original slots.

\emph{2. One near/far split.} Fix $\Lambda\ge8$ and set
\begin{gather*}
 H:=P_{>\Lambda\nu}u=P_{>\Lambda\nu}h,\qquad
 v:=h-H=(P_{\le\Lambda\nu}-P_{\le\nu})u,\qquad
 \mu:=\Lambda\nu/8,\\
 \|v\|_\infty\le C\Lambda\nu\mathcal Y,\qquad
 \|\nabla v\|_\infty\le C\Lambda^2\nu^2\mathcal Y,\qquad
 \|H\|_4\le Ca^{1/4},\quad\|H\|_6\le CB.
\end{gather*}
The identity $P_{>\Lambda\nu}P_{\le\nu}=0$ is a support identity.
Equation~\eqref{lowpoint} and the smooth multiplier bounds give the last
line for every positive $\nu$. Young--Bernstein
\cite{Tao}
also gives
\begin{equation}\label{gBernstein}
 \|\mathfrak o_\nu\|_1\le Cf,\quad
 \|\nabla\mathfrak o_\nu\|_1\le C\nu f,\quad
 \|\mathfrak o_\nu\|_{3/2}\le C\nu f,\quad
 \|\nabla\mathfrak o_\nu\|_{3/2}\le C\nu^2f.
\end{equation}
The near slot and its weak gradient are the differences of the corresponding
actual lowpass $L^\infty$ representatives. Thus every near product is in
$L^1$. Integrating the source derivative back onto $qv$ and using
\eqref{full-weight-bounds},
\begin{align*}
 |\mathfrak S_k(\mathfrak o_\nu,v)|
 &\le C\|\mathfrak o_\nu\|_1
 \bigl[\|q\|_\infty\|\nabla v\|_\infty
 +(\|\divg q\|_\infty+\|q_m\|_\infty)\|v\|_\infty\bigr]\\
 &\le C_B(\Lambda\delta_\nu+\Lambda^2\delta_\nu^2)f\mathcal Y.
\end{align*}

\emph{3. Actual derivative bounds for the far weights.} The density and
current have compatible representatives satisfying
\begin{align*}
 \rho&=|h|^2\in L^1\cap L^2,\qquad\|\rho\|_2=a^{1/2},\\
 p&=\im(\bar h\nabla h)\in L^1\cap L^{4/3},\qquad
 \|p\|_{4/3}\le3\|h\|_4\|\nabla h\|_2.
\end{align*}
At fixed $R>0$, the identities
\[
 \Xi_R(|x|)=\int_0^1\Phi_R(t|x|)\,dt,\qquad
 Dk_R(x)=\Xi_R(|x|)\langle x,\cdot\rangle
\]
give $k_R\in C^3$ globally. Its derivatives through order three are bounded
on the unit ball by continuity and outside it by \eqref{sourcekernel}.
Differentiation of the actual convolutions gives
\[
 D^2q_j=(D^2\partial_jk_R)*\rho.
\]
The derivative $\nabla q_m$ is taken from the original reversed convolution
kernel defining $q_m$.
The Hardy--Littlewood--Sobolev mappings
\[
 I_1:L^2\to L^6,
 \qquad
 I_2:L^{4/3}\to L^{12}
\]
from \cite{Tao}, together with
\eqref{sourcekernel}, imply
\[
 \|D^2q\|_6\le C\|\rho\|_2=Ca^{1/2},
 \qquad
 \|\nabla q_m\|_{12}\le C\|p\|_{4/3}\le CBa^{1/4}.
\]
These are nonendpoint mappings of the actual Fr\'echet derivatives.

Write $q^{\rm lo}:=P_{\le\mu}q$, $q^{\rm hi}:=P_{>\mu}q$, and likewise
for $q_m$. The bounded homogeneous Bernstein estimates
\cite{Tao} give
\[
 \|P_{>\mu}w\|_p\le C\mu^{-1}\|Dw\|_p
 \quad\text{for }w\in C_b^1,\ Dw\in L^p,
\]
and
\[
 \|P_{>\mu}w\|_p\le C\mu^{-2}\|D^2w\|_p
 \quad\text{for }w\in C_b^2,\ D^2w\in L^p.
\]
They follow from the mass-one lowpass kernel, its vanishing first moments,
Taylor remainders and Minkowski; $w\in L^p$ is not required. Applying them
with $p=6,12$ gives
\begin{equation}\label{far-weight-bounds}
 \begin{aligned}
 \|q^{\rm hi}\|_6&\le C(\Lambda\nu)^{-2}a^{1/2},\\
 \|\divg q^{\rm hi}\|_6&\le C(\Lambda\nu)^{-1}a^{1/2},\\
 \|q_m^{\rm hi}\|_{12}&\le C(\Lambda\nu)^{-1}Ba^{1/4}.
 \end{aligned}
\end{equation}
The highpass removes the zero-frequency polynomial ambiguity in
$\mathcal S'$; the estimates hold at every real $\mu>0$.

\emph{4. Exact low-weight cancellation and the three remaining terms.}
The supports satisfy
\begin{gather*}
 \supp\widehat{\mathfrak o_\nu}\subset\overline B_{2\nu},\qquad
 \supp\widehat H\subset\R^3\setminus B_{\Lambda\nu},\\
 \supp\widehat{q^{\rm lo}},\ \supp\widehat{q_m^{\rm lo}}
 \subset\overline B_{\Lambda\nu/4},\qquad 2+\Lambda/4<\Lambda.
\end{gather*}
Products of the bounded low weight with $\mathfrak o_\nu$ or
$\nabla\mathfrak o_\nu$ lie in $L^2$ and have disjoint Fourier support
from $H$. Therefore every low-weight pairing in \eqref{sourceIBP} is zero.
H\"older, \eqref{gBernstein} and \eqref{far-weight-bounds} give
\begin{align*}
 \left|\int\bar Hq^{\rm hi}\cdot\nabla\mathfrak o_\nu\right|
 &\le\|H\|_6\|q^{\rm hi}\|_6\|\nabla\mathfrak o_\nu\|_{3/2}
 \le CB\Lambda^{-2}fa^{1/2},\\
 \left|\int\bar H\mathfrak o_\nu\divg q^{\rm hi}\right|
 &\le\|H\|_6\|\divg q^{\rm hi}\|_6\|\mathfrak o_\nu\|_{3/2}
 \le CB\Lambda^{-1}fa^{1/2},\\
 \left|\int\bar Hq_m^{\rm hi}\mathfrak o_\nu\right|
 &\le\|H\|_4\|q_m^{\rm hi}\|_{12}\|\mathfrak o_\nu\|_{3/2}
 \le CB\Lambda^{-1}fa^{1/2}.
\end{align*}
The exponent identities are $1/6+1/6+2/3=1$ and $1/4+1/12+2/3=1$.

\emph{5. Time integration and one choice of $\Lambda$.} The source is
continuous along the actual positive $\lambda(t)$: its coefficients are
bounded Gaussian-kernel pairings of continuous $L^1$ density/current
and $L^1$ products of the source's $L^2$ factors. In particular,
\begin{gather*}
 |\mathfrak S_k(\mathfrak o_\nu,h)|\in L^1(I_j),\qquad
 f\mathcal Y\le f^2+\mathcal Y^2,\qquad f\sqrt a\le f^2+a.
\end{gather*}
Integrating these two inequalities and applying \eqref{fcompare} gives
\[
 \int_{I_j}f\mathcal Y+\int_{I_j}f\sqrt a
 \le C_u\mathfrak K_\nu(I_j).
\]
Real linearity in the slot, the near bound and the three far bounds imply
\[
 \int_{I_j}|\mathfrak S_k(\mathfrak o_\nu,h)|\,dt
 \le C_u(\Lambda^{-1}+\Lambda\delta_\nu+\Lambda^2\delta_\nu^2)
       \mathfrak K_\nu(I_j).
\]
Projected nonvanishing at $t=0$ gives $\delta_\nu>0$. By \eqref{smallfreq},
choose the cutoff range so that $0<\delta_\nu\le1/64$, and then set
\[
 \Lambda:=\delta_\nu^{-1/2}\ge8,\qquad
 \Lambda^{-1}=\Lambda\delta_\nu=\sqrt{\delta_\nu},\qquad
 \Lambda^2\delta_\nu^2=\delta_\nu\le\sqrt{\delta_\nu}.
\]
This proves \eqref{gsource} for the same $T_j$ and one time-independent
$\Lambda$. No limit assumption on $\Lambda\nu$ is used.
\end{proof}
The input and output source estimates are now expressed on the same terminal intervals and with the same full-density weights. We next justify the interaction identity directly at energy regularity.

\subsection{Passage to the energy class}
\emph{1. Local domains and the ordinary forced laws.} For a compact
$I\Subset I_u$ and fixed sufficiently small $\nu>0$, local theory gives
\begin{equation}\label{local-endpoint-domain}
 \nabla u\in L^2(I;L^6),\qquad
 h=P_{>\nu}u\in C(I;H^1)\cap L^2(I;W^{1,6}).
\end{equation}
Use \cite{KM}. For the native
$C_t\dot H^1$ mild solution, the identification follows from homogeneous
approximation \cite{TaoVisan} and uniqueness
\cite{TaoVisan}: the difference lies in
$CL^6\subset L^2(I;L^6)$ and has zero initial value. A finite compact-time
cover yields \eqref{local-endpoint-domain}, without individual finite mass.

Let $v,N\in C(J;H^1)$ on an open interval $J$ and suppose that
$v_t=i\Delta v+iN$ in mild form. Define
\[
 \rho:=|v|^2,
 \qquad
 j:=\im(\bar v\nabla v).
\]
The forced local laws are
\begin{equation}\label{forced-local-laws}
 \begin{aligned}
 \rho_t&=-2\divg j-2\im(N\bar v),\\
 \partial_tj_j&=-2\partial_k\re(\partial_jv\,\partial_k\bar v)
 +\tfrac12\partial_j\Delta\rho
 -\re(N\partial_j\bar v-v\partial_j\bar N).
 \end{aligned}
\end{equation}
Repeated spatial indices are summed. The normalization in
\cite{CKSTT}
is $iv_t+\Delta v=N_{\rm ref}$ with momentum density $2j$; here
$N_{\rm ref}=-N$. Smooth approximation in the linear Duhamel formula
gives convergence of every product in $CL^1$, and hence the displayed
$CH^1$ local laws.

For momentum testing use the normed graph
\[
 X_{\rm mom}:=\{\phi\in C^2:\phi,\Delta\phi\in C_b^1\},\qquad
 \|\phi\|_{X_{\rm mom}}:=\|\phi\|_{C_b^1}+\|\Delta\phi\|_{C_b^1}.
\]
Mollification and then spatial cutoff extend \eqref{forced-local-laws} to
this graph: remove the cutoff first at fixed mollification scale, then
pass to the limit against the $L^1$ products. The controlled quantities
are $\phi,\nabla\phi,\nabla\Delta\phi$.

\emph{2. Fixed-radius bilinear identity.} For fixed $R>0$,
\begin{gather*}
 k_R\in C^3,\quad\Delta k_R\in C_b^2,\quad
 \Delta\partial_jk_R=\partial_j\Delta k_R,\\
 \partial_jk_R*\rho\in X_{\rm mom},\qquad
 y\longmapsto\int\partial_jk_R(x-y)j_j(x)\,dx\in C_b^1.
\end{gather*}
Define
\begin{align*}
 K_R(v)&:=\iint\partial_{jk}k_R(x-y)\rho(y)
              \re(\partial_jv(x)\partial_k\bar v(x))\,dxdy,\\
 C_R(v)&:=\iint\partial_{jk}k_R(x-y)j_j(y)j_k(x)\,dxdy.
\end{align*}
Apply the momentum law to the first convolution test, the mass law to the
transposed test, and then the weak bilinear product rule and Fubini:
\begin{equation}\label{fixed-forced-balance}
 \frac d{dt}M_{k_R}(v)
 =\Q_R(\rho)+4K_R(v)-4C_R(v)+\mathfrak S_{k_R}(N,v).
\end{equation}
The Laplacian flux produces $\Q_R$; both forcing terms produce the complete
$\mathfrak S$, including the mass-current term. Bounded Gaussian derivative
kernels and $L^1$ factors make every pairing finite. Continuous local dual
derivatives and the Banach fundamental theorem
\cite{Hunter} give the derivative at each interior
time and its integrated endpoint identity.

On positive-time compact intervals, apply this with the actual fields
\[
 v=h_M:=P_{>\nu}P_{\le M}u,\qquad
 N=N_M:=P_{>\nu}P_{\le M}F(u).
\]
The previously constructed $C_tL^1_x$ forcing and differentiated lowpass
bounds give $N_M\in C_tH^1_x$.

\emph{3. A moving positive Lipschitz radius.} Temporarily let $R=R(t)>0$
be Lipschitz on a compact time interval, with $R(t)\ge m>0$. Then
\[
 \partial_R\nabla k_R=\nabla\Psi_R,\qquad
 \|\nabla\Psi_R\|_\infty\le3/m.
\]
The mixed radius--field increment is bounded by
\begin{align*}
 C|R(t+s)-R(t)|
 \bigl(&\|\rho(t+s)-\rho(t)\|_1\|j(t+s)\|_1\\
       &+\|\rho(t)\|_1\|j(t+s)-j(t)\|_1\bigr)=o(|s|).
\end{align*}
For held density and current, dominated differentiation gives the literal
$\mathcal J_R$. Combining this with \eqref{fixed-forced-balance},
\begin{equation}\label{moving-forced-balance}
 \frac d{dt}M_{k_{R(t)}}(v(t))
 =\Q_{R(t)}(\rho)+4K_{R(t)}(v)-4C_{R(t)}(v)
  +\mathfrak S_{k_{R(t)}}(N,v)+R'(t)\mathcal J_{R(t)}
\end{equation}
for a.e. $t$. The fixed-radius rate is jointly continuous in $R>0$ and
the two $H^1$ fields. Its compact uniform bound, the held-radius mean
value theorem and the radius Lipschitz estimate imply
that $t\mapsto M_{k_{R(t)}}(v(t))$ is Lipschitz, hence absolutely
continuous. The fundamental theorem of calculus therefore gives the
integrated form of \eqref{moving-forced-balance}.
Continuity of the action and integrability of the rate extend positive-time
identities to an endpoint at $0$. No negative-time extension of the
regularized forcing equation is used. From now on, set
\[
 R(t)=\lambda(t):=\Lambda_\vartheta(|P_{>\nu}u(t)|^2).
\]
This scale is independent of the regularization parameter $M$.
Its positivity and Lipschitz regularity were proved from the original
projected mass law and the same compact-orbit nonvanishing.

\emph{4. Focusing term and the actual smoothed defect.} For smooth bounded
$h\in L^6$ with $\nabla h\in L^2$, the complex polynomial gives
\begin{gather*}
 \im(F(h)\bar h)=0,\qquad
 \re(F(h)\partial_j\bar h-h\partial_j\overline{F(h)})
       =-\partial_j(\tfrac23|h|^6),\\
 |h|^6,\nabla(|h|^6)\in L^1,\qquad q\in C_b^1,\\
 \mathfrak S_{k_R}(F(h),h)=-\tfrac43\int(\divg q)|h|^6.
\end{gather*}
The last identity is whole-space integration by parts. Entrywise expansion
and the Hessian trace give
\begin{equation}\label{principal-expanded}
 \mathfrak B_{k_R}[h]=\Q_R(\rho)+4K_R(h)-4C_R(h)
                     -\tfrac43\int(\divg q)|h|^6.
\end{equation}
All terms are bounded-kernel $L^1$ pairings; in particular
$|h|^6=\re(F(h)\bar h)$ is a continuous $L^1$ product.

From now on fix $b>0$ and $I=[0,b]$; all following forcing bounds
and regularization limits are on this forward interval. Set
\begin{gather*}
 u_M:=P_{\le M}u,\quad h_M:=P_{>\nu}u_M,\quad
 c_M:=P_{\le M}F(u)-F(u_M),\quad r_M:=P_{>\nu}c_M,\\
 N_M-F(h_M)
 =[F(u_M)-F(h_M)]-P_{\le\nu}F(u_M)+r_M.
\end{gather*}
This is an exact multiplier identity; $u_M$ always has its actual smoothed
forcing $P_{\le M}F(u)$. Young's bounds
\cite{Tao} and finite-$p$ approximation
\cite{TaoApprox}, only for $p=6/5,6$, imply
\begin{align*}
 P_{\le M}F(u)&\longrightarrow F(u)\quad\text{in }L^2(I;L^{6/5}),\\
 u_M(t)&\longrightarrow u(t)\quad\text{in }L^6,\\
 \sup_{M,t\in I}\|u_M(t)\|_6&<\infty,\\
 F(u_M)&\longrightarrow F(u)\quad\text{in }L^2(I;L^{6/5}),\\
 c_M,\ r_M&\longrightarrow0\quad\text{in }L^2(I;L^{6/5}),\\
 h_M&\longrightarrow h\quad\text{in }L^2(I;W^{1,6}),\\
 h_M(t)&\longrightarrow h(t)\quad\text{in }H^1,\\
 \sup_{M,t\in I}\bigl(\|h_M(t)\|_2+\|\nabla h_M(t)\|_2\bigr)&<\infty.
\end{align*}
The second line uses dominated convergence; the fourth uses
\eqref{local-endpoint-domain} on the actual weak gradient.

\emph{5. The complete commutator vanishes.} Form $q_M,q_{m,M}$ from the
complete $h_M$ at the original $\lambda(t)$. For a source slot
$Z\in L^{6/5}$ and $v,\nabla v\in L^6$, use the finite expression
\begin{equation}\label{weak-source-form}
 \mathfrak S_k^{\flat}(Z,v):=
 -4\int q\cdot\re(Z\nabla\bar v)
 -2\int(\divg q)\re(Z\bar v)
 -4\int q_m\im(Z\bar v).
\end{equation}
It equals the classical source by integration by parts and is real linear
in $Z$. Since lowpass contracts the projected $L^2$ mass and gradient,
\eqref{full-weight-bounds} gives constants uniform in $M$, and
\begin{align*}
 \int_I|\mathfrak S_{k_\lambda}^{\flat}(r_M,h_M)|\,dt
 &\le C_{I,\nu}\|r_M\|_{L^2(I;L^{6/5})}
       \bigl(\|\nabla h_M\|_{L^2(I;L^6)}+\|h_M\|_{L^2(I;L^6)}\bigr)\\
 &\longrightarrow0.
\end{align*}
Joint continuity of the Gaussian pairings proves measurability for the
strongly measurable gradient representatives. The $H^1$ and $L^{6/5}$
realizations of $N_M$ agree in $\mathcal S'$ and a.e. Thus the four pieces
of $N_M$ give respectively
\begin{align*}
 F(h_M)&\longmapsto\eqref{principal-expanded},\\
 F(u_M)-F(h_M)&\longmapsto\eqref{input-normal-form},\\
 -P_{\le\nu}F(u_M)&\longmapsto\eqref{sourceIBP},\\
 r_M&\longmapsto0.
\end{align*}
No separate $H^1$ realization of a nonlinear defect is required.
The $L^4$ lowpass bound \cite{Tao} gives
$F(u_M)\in C_tL^1_x$ with one uniform bound. All identities and limits
use the same weak-gradient representative on $[0,b]$ and the same original
selector; an extension used for measurability does not alter $\lambda'$.

\emph{6. Every term has an $L^1_t$ limit.} Keep $I=[0,b]$ and choose
$m,A,B>0$ with
\[
 \lambda\ge m,\qquad \|h_M\|_2\le A,\qquad\|\nabla h_M\|_2\le B
 \quad\text{on }I\text{ for all }M.
\]
If $\|v\|_6\le S\|\nabla v\|_2$, the bounded Gaussian kernels give
\begin{gather*}
 |\mathfrak B_{k_\lambda}[h_M]|
 \le\frac{7A^4}{m^3}+\frac{144A^2B^2}{m}
       +\frac{4A^2(SB)^6}{m},\qquad
 |\mathcal J_\lambda(h_M)|\le\frac{18A^3B}{m}.
\end{gather*}
At each time, $H^1\cap L^6$ convergence gives
\begin{align*}
 |h_M|^2&\longrightarrow |h|^2,\\
 \im(\bar h_M\nabla h_M)&\longrightarrow\im(\bar h\nabla h),\\
 \re(\nabla\bar h_M\otimes\nabla h_M)&\longrightarrow
 \re(\nabla\bar h\otimes\nabla h),\\
 |h_M|^6&\longrightarrow|h|^6
\end{align*}
in $L^1_x$.
For the principal and input terms, dominated convergence gives
\begin{align*}
 \mathfrak B_{k_\lambda}[h_M]&\to\mathfrak B_{k_\lambda}[h]
       \quad\text{in }L^1(I),\\
 \mathfrak S_{k_\lambda}(F(u_M)-F(h_M),h_M)
 &\to\mathfrak S_{k_\lambda}(\mathfrak c_\nu,h)
       \quad\text{in }L^1(I).
\end{align*}
For the output term,
\[
 F(u_M)\to F(u)\quad\text{in }L^1_x,
 \qquad \sup_M\|F(u_M)\|_1<\infty,
\]
so
\[
 -P_{\le\nu}F(u_M)\to\mathfrak o_\nu\quad\text{in }H^1,
\]
and the associated source terms converge in $L^1(I)$. For the moving term,
\begin{align*}
 |\lambda'\mathcal J_\lambda(h_M)|
 &\le(18A^3B/m)|\lambda'|\in L^1(I),\\
 \lambda'\mathcal J_\lambda(h_M)
 &\to\lambda'\mathcal J_\lambda(h)\quad\text{in }L^1(I).
\end{align*}
Each limit uses dominated convergence and the bounded-kernel pairing
representation. For the input limit, \eqref{input-normal-form},
\eqref{full-weight-bounds} and $\sup_M(\|u_M\|_4+\|u_M\|_6)<\infty$
give the spatial extension already proved.
Together with Step~5, the entire rate converges in $L^1(I)$.
Both endpoint actions converge by their bounded-kernel $L^1$ pairings.

\emph{7. Identify the original weak source.} In $L^{6/5}$ and $\mathcal S'$,
\[
 E=F(u)-F(h)-P_{\le\nu}F(u)=\mathfrak c_\nu+\mathfrak o_\nu.
\]
For a.e. time, \eqref{local-endpoint-domain} gives $\nabla h\in L^6$.
Smoothing that same weak gradient, the $L^{6/5}$--$L^6$ pairings in
\eqref{weak-source-form} and the classical input identity give
\[
 \mathfrak S_{k_\lambda}^{\flat}(E,h)
 =\mathfrak S_{k_\lambda}(\mathfrak c_\nu,h)
  +\mathfrak S_{k_\lambda}(\mathfrak o_\nu,h)
 =:\mathfrak S_{k_\lambda}(E,h)\quad\text{a.e.}
\]
The output identification uses its actual $H^1$ realization; the sum is
the integrable rate limit. Passing $M\to\infty$ first on $[0,b]$ and then
subtracting endpoint identities proves, for every $s,t\ge0$ in either order,
\begin{equation}\label{integratedidentity}
 M_{k_{\lambda(t)}}(h(t))-M_{k_{\lambda(s)}}(h(s))
 =\int_s^t\bigl[
 \mathfrak B_{k_{\lambda(\tau)}}[h]
 +\mathfrak S_{k_{\lambda(\tau)}}(E,h)
 +\lambda'\mathcal J_\lambda\bigr]\,d\tau.
\end{equation}

\subsection{Quartic integrability and the zero-mass contradiction}
\emph{1. Absorb on the single terminal sequence.} For each fixed $\nu>0$,
\begin{gather*}
 |M_{k_\lambda}(h)|\le C\|h\|_2^3\|\nabla h\|_2\quad(\lambda>0),\\
 C_\nu:=2\sup_{t\ge0}|M_{k_{\lambda(t)}}(h(t))|<\infty.
\end{gather*}
Keep the spatial constants, $L$, $\vartheta$, and the same $T_j,C_{\rm cmp}$
before decreasing $\nu$. Define
\begin{align*}
 \varepsilon_\nu:=
 &C_{\rm in}C_{\rm cmp}^2(\delta_\nu+\delta_\nu^2)
 +C_{\rm out}\sqrt{\delta_\nu}
 +C_{\rm mov}(\nu^2+\nu\delta_\nu^2)
 \longrightarrow0.
\end{align*}
The constants are independent of $j,\nu$. Equations
\eqref{quarticcoercivity}, \eqref{inputdefect}, \eqref{gsource}, the signed
moving bound and \eqref{integratedidentity} give
\[
 c_u\mathfrak K_\nu(I_j)
 \le C_\nu+\varepsilon_\nu\mathfrak K_\nu(I_j).
\]
Choose $\nu$ below all preceding thresholds and so that
$\varepsilon_\nu<c_u/2$. Then, for every $j$,
\[
 \mathfrak K_\nu(I_j)\le2C_\nu/c_u.
\]
Since $T_j\uparrow\infty$ and the integrand is nonnegative,
\[
 \mathfrak K_\nu([0,\infty))<\infty.
\]
Finally,
\[
 \int_{I_j}(f^2+\mathcal Y^2)
 \le C_{\rm cmp}^2\mathfrak K_\nu(I_j),
\]
so $(f,\mathcal Y)\in L^2(0,\infty;\R^2)$.
The last implication is a second monotone-convergence passage with the
same comparison witness.

\emph{2. Construct a low-frequency $L^2$ value.} Fix $K>0$ and set
$Q_K:=P_{\le K}$. The already used $L^1\to L^1$ and $L^1\to L^2$ bounds give
\begin{gather*}
 \|Q_Kg\|_{6/5}\le\|Q_Kg\|_1^{2/3}\|Q_Kg\|_2^{1/3}
 \le A_K\|g\|_1,\qquad A_K<\infty,\\
 \mathbf1_{[t,\infty)}Q_KF(u)\in L^2_sL^{6/5}_x,\qquad
 \|Q_KF(u)\|_{L^2([t,\infty);L^{6/5})}\le A_K\|f\|_{L^2(t,\infty)}.
\end{gather*}
The multiplier realizations agree in $\mathcal S'$. For this fixed $K$,
dual endpoint Strichartz
\cite{KT,KM} constructs
\begin{equation}\label{low-L2-tail}
 \begin{gathered}
 m_K(t):=-i\int_t^\infty e^{i(t-s)\Delta}Q_KF(u(s))\,ds\in L^2,\\
 \|m_K(t)\|_2\le CA_K\|f\|_{L^2(t,\infty)}.
 \end{gathered}
\end{equation}
The integral is defined through its dual action on Schwartz tests.
Apply the original \eqref{no-waste-formula} to the same cutoff tests.
Commutation gives
\begin{gather*}
 Q_Ke^{it\Delta}=e^{it\Delta}Q_K,\qquad Q_K\partial_j=\partial_jQ_K,\\
 \partial_jm_K(t)=Q_K\partial_ju(t)\quad\text{in }\mathcal S'.
\end{gather*}
Sobolev embedding \cite{Hunter} gives
\[
 m_K(t)\in L^2,\quad\nabla m_K(t)\in L^2,\qquad m_K(t)\in H^1\subset L^6.
\]
Finally,
\begin{gather*}
 m_K(t)-Q_Ku(t)\in L^6,\qquad\nabla(m_K(t)-Q_Ku(t))=0,\\
 m_K(t)=Q_Ku(t).
\end{gather*}
The last implication is uniqueness of the $L^6$ representative with its
homogeneous weak derivatives; no additive constant remains.

\emph{3. Mass conservation only after construction.} The complementary
highpass gives
\[
 \|(I-Q_K)u(t)\|_2\le CK^{-1}\|\nabla u(t)\|_2\le CB/K.
\]
At $K=1$, this and \eqref{low-L2-tail} imply
\[
 u(t)=Q_1u(t)+(I-Q_1)u(t)\in L^2\qquad(t\ge0).
\]
Local $H^1$ theory and homogeneous uniqueness
\cite{TaoVisan}
identify the same original solution with the $H^1$ solution. The mass law
and approximation \cite{Tao} then give
\[
 \|u(0)\|_2=\|u(t)\|_2
 \le CA_K\|f\|_{L^2(t,\infty)}+CB/K.
\]
For fixed $K$, letting $t\to\infty$ yields
\[
 \|u(0)\|_2\le CB/K.
\]
Letting $K\to\infty$ gives $\|u(0)\|_2=0$, hence $u(0)=0$ in
$\dot H^1$ and therefore $E_c=0$.
This contradicts \eqref{standing-orbit-data}. The time limit precedes the
frequency limit; no uniform estimate on $A_K$ as $K\to\infty$ is needed.

\emph{4. Return to the original theorem.} Let $u$ be an arbitrary
maximal strong solution with
\[
 E(u_0)<E(W),\qquad\|\nabla u_0\|_2<\|\nabla W\|_2.
\]
If $I_u\ne\R$ or $\|u\|_{L^{10}(I_u\times\R^3)}=\infty$, the critical-element
reduction produces an orbit satisfying \eqref{standing-orbit-data}, which
has just been ruled out. Therefore
\[
 I_u=\R,
 \qquad
 \|u\|_{L^{10}(\R\times\R^3)}<\infty.
\]
Apply the finite-size criterion separately on the two half-lines
\cite{KM}:
\begin{gather*}
 \exists u_+,u_-\in\dot H^1(\R^3),\\
 \lim_{t\to+\infty}\|u(t)-e^{it\Delta}u_+\|_{\dot H^1}=0,\\
 \lim_{t\to-\infty}\|u(t)-e^{it\Delta}u_-\|_{\dot H^1}=0.
\end{gather*}
These are the two actual homogeneous gradient-norm limits in
Theorem~\ref{mainthm}. This completes the proof of Theorem~\ref{mainthm}.

\section*{Acknowledgments}
ChatGPT (OpenAI) was used to formulate research ideas and proof strategies,
develop and check mathematical derivations, and assist with editing and
Lean formalization. The authors reviewed the resulting arguments and take
full responsibility for the mathematics. The full-manuscript Lean
formalization is maintained in a
\href{https://github.com/penghongzhong/3D-NLS-Lean-Verification}{private GitHub repository};
its proof sources have passed compilation and terminal audits in GitHub
Actions, relative to 51 explicitly documented standard literature interfaces
retained as theorem parameters.

\bibliographystyle{amsplain}
\bibliography{references}
\end{document}